\documentclass[reqno]{amsart}
% Comment the following line to NOT allow the usage of umlauts
\usepackage{amsmath, amssymb,amsthm,bm,mathrsfs}
\usepackage[utf8]{inputenc}
\usepackage{enumerate}
\usepackage{hyperref}
% Uncomment the following line to allow the usage of graphics (.png, .jpg)
\usepackage{graphicx}
\usepackage{pgf,tikz,pgfplots}
\pgfplotsset{compat=1.16}
\usetikzlibrary{arrows.meta,patterns,matrix,decorations.pathreplacing,calc,positioning}

\usepackage{todonotes}

\newtheorem{theorem}{Theorem}
\newtheorem{lemma}{Lemma}
\newtheorem{corollary}{Corollary}

\newtheorem{proposition}[lemma]{Proposition}

\newenvironment{manualtheorem}[1]{%
  \manualtheoreminner
}{\endmanualtheoreminner}

\theoremstyle{definition}
\newtheorem{definition}{Definition}

\theoremstyle{remark}
\newtheorem{remark}{Remark}
\newtheorem{example}{Example}

%\title{Some observations about the $\ell^p$-improving problem for discrete curves}
\title[A restricted $2$-plane transform]{A restricted $2$-plane transform related to Fourier Restriction for surfaces of codimension $2$}
%\date{}

\author{Spyridon Dendrinos}
\address[S. Dendrinos]{School of Mathematical Sciences, University College Cork, Western Gateway Building, Western Road, Cork, Ireland }
\email{sd@ucc.ie}

\author{Andrei Mustata}
\address[A. Mustata]{School of Mathematical Sciences, University College Cork, Western Gateway Building, Western Road, Cork, Ireland }
\email{andrei.mustata@ucc.ie}

\author{Marco Vitturi}
%\thanks{The third author was supported in part by the Irish Research Council via the IRC Postdoctoral Fellowship GOIPD/2019/434.}
\address[M. Vitturi]{School of Mathematical Sciences, University College Cork, Western Gateway Building, Western Road, Cork, Ireland}
\email{marco.vitturi@ucc.ie}

% Start the document
\begin{document}
\begin{abstract}
We draw a connection between the affine invariant surface measures constructed by P.\@ Gressman in \cite{Gressman2019} and the boundedness of a certain geometric averaging operator associated to surfaces of codimension $2$ and related to the Fourier Restriction Problem for such surfaces. For a surface given by $(\xi, Q_1(\xi), Q_2(\xi))$, with $Q_1,Q_2$ quadratic forms on $\mathbb{R}^d$, the particular operator in question is the $2$-plane transform restricted to directions normal to the surface, that is
\[ \mathcal{T}f(x,\xi) := \iint_{|s|,|t| \leq 1} f(x - s \nabla Q_1(\xi) - t \nabla Q_2(\xi), s, t)\,ds\,dt, \]
where $x,\xi \in \mathbb{R}^d$. We show that when the surface is well-curved in the sense of Gressman (that is, the associated affine invariant surface measure does not vanish) the operator satisfies sharp $L^p \to L^q$ inequalities for $p,q$ up to the critical point. We also show that the well-curvedness assumption is necessary to obtain the full range of estimates. The proof relies on two main ingredients: a characterisation of well-curvedness in terms of properties of the polynomial $\det(s \nabla^2 Q_1 + t \nabla^2 Q_2)$, obtained with Geometric Invariant Theory techniques, and Christ's Method of Refinements. With the latter, matters are reduced to a sublevel set estimate, which is proven by a linear programming argument.
\end{abstract}

\maketitle
%\tableofcontents

% Create a new 1st level heading
\section{Introduction}
The $k$-plane transform in $\mathbb{R}^n$ is the operator $T_{n,k}$ defined by 
\[ T_{n,k} f(\pi) := \int_{\pi} f \,d\mathcal{L}_{\pi}, \]
where $\pi$ is any affine $k$-plane in $\mathbb{R}^n$ and $d\mathcal{L}_{\pi}$ denotes the Lebesgue measure on $\pi$. Such operators are generalisations of the X-ray transform and of the Radon transform, with which they coincide when $k=1$ and $k=n-1$ respectively. The strongest results for the boundedness of $T_{n,k}$ for $(n,k)$ generic have been obtained by M.\@ Christ in \cite{Christ84}, in which he proved a range of mixed-norm estimates (building upon work of S.\@ W.\@ Drury in \cite{Drury1983,Drury1984}); see also \cite{ROberlin2010} for some improvements for a subset of $(n,k)$ values and \cite{DuoandikoetxeaNaibo} for a survey of further developments. The particular case of $k=1$ has been the object of considerable attention due to its relationship with the Kakeya maximal function -- see T.\@ Wolff's influential paper \cite{Wolff1998} for the $n=3$ case, \cite{LabaTao} for generic $n$ and again \cite{ROberlin2010} for other improvements.\par 
In this paper we will be concerned with the restriction of the $2$-plane transform to particular sets of directions -- ones that arise as normals to surfaces of codimension $2$ that are ``well-curved'', in a sense that will be made precise later on (we regard the identification of the correct notion of well-curvedness as one of the main aims of this paper). A number of instances of restricted $T_{n,k}$ transforms exist in the literature, particularly when $k=1$: 
\begin{enumerate}[i)]
\item The restriction of the X-ray transform $T_{n,1}$ to a one-dimensional set of directions of the form $(\gamma(t),1)$, with $\gamma : [-1,1] \to \mathbb{R}^{n-1}$ a curve, has first been considered by M.\@ Christ and B.\@ Erdo\u{g}an in \cite{ChristErdogan} for the moment curve $(t,t^2, \ldots, t^{n-1})$; those results have later been extended to the sharp mixed-norm range by the first author and B.\@ Stovall in \cite{DendrinosStovall,DendrinosStovall2}. In this case, in order to obtain estimates for the largest range of exponents it is vital to assume that the curve $\gamma$ is well-curved in the sense of having non-vanishing torsion. The latter condition is equivalent to the non-vanishing of the affine invariant surface measure on $\gamma$ as introduced by Gressman in \cite{Gressman2019}.\footnote{This measure further coincides with the well-known affine arclength from Affine Geometry.}
\item The restriction of the X-ray transform $T_{n,1}$ to two-dimensional sets of directions was studied by B.\@ Erdo\u{g}an and R.\@ Oberlin in \cite{ErdoganOberlin}; there the authors considered directions of the form $(\varphi(u,v),1)$ for various examples of maps $\varphi: [-1,1]^2 \to \mathbb{R}^{n-1}$. It can be verified by the methods of \cite{Gressman2019} (in particular, by Theorem 6 in that paper) that in all their examples the affine invariant surface measure on the surface $\varphi([-1,1]^2)$ is non-vanishing. %
\item The restriction of the X-ray transform $T_{n,1}$ to the $(n-2)$-dimensional set of directions given by light-rays (that is, directions of the form $(\omega, 1)$ with $\omega \in \mathbb{S}^{n-2}$) was studied by T.\@ Wolff in \cite{Wolff2001}, in which mixed-norm estimates were proven in a certain range (not believed to be sharp). In this case the set of directions possesses curvature because the sphere $\mathbb{S}^{n-2}$ is curved. 
\item The restriction of the Radon transform $T_{n,n-1}$ to hyperplanes orthogonal to directions of the form $(\Gamma(\xi),1)$, with $\Gamma : [-1,1]^m \to \mathbb{R}^{n-1}$ the parametrisation of an $m$-dimensional submanifold of $\mathbb{R}^{n-1}$, has been considered by P.\@ Gressman in \cite{Gressman2022}.\footnote{More precisely, the operator here described is the dual operator to the one described in Example 3, Section 6 of \cite{Gressman2022}.} Combining the methods of that paper with those of \cite{Gressman2019}, one obtains non-trivial $L^p \to L^q$ estimates under the assumption that the image of $\Gamma$ has affine invariant surface measure (as per \cite{Gressman2019}) that is non-vanishing.
\end{enumerate}
We are not aware of restrictions of $T_{n,k}$ transforms for $k$ other than $1$ or $n-1$ that have been studied in the literature;\footnote{Save perhaps for \cite{DOberlin14}, which however has a measure-theoretic flavour rather than the geometric flavour we are interested in.} ours seems to be the first such instance.\par 
We will now introduce the restriction of the $2$-plane transform $T_{n,2}$ that we are going to consider in this paper. Besides fitting in well within the aforementioned literature, the operators we are about to introduce arise naturally in the study of Fourier Restriction for surfaces of codimension 2, as will be illustrated in Section \ref{section:motivation}. Let $d\geq 2$ and take a compact quadratic surface of codimension $2$ in $\mathbb{R}^{d+2}$, given as a graph by the parametrisation
\[ \phi(\xi) :=  (\xi, Q_1(\xi), Q_2(\xi)), \qquad \xi \in [-1,1]^d, \]
where $Q_1, Q_2$ are quadratic forms on $\mathbb{R}^d$; we use $\Sigma(Q_1,Q_2)$ to denote the surface $\phi([-1,1]^d)$. It will also be convenient to introduce the real symmetric $d \times d$ matrices $A,B$ that correspond to the Hessians $\nabla^2 Q_1, \nabla^2 Q_2$, that is, the matrices given by 
\[ A \xi := \nabla Q_1 (\xi), \quad B\xi := \nabla Q_2(\xi), \quad \text{ for all } \xi \in \mathbb{R}^d. \]
\begin{remark}
We concentrate on quadratic surfaces for simplicity of exposition, but the main result that will be given in Section \ref{section:main_results} (Theorem \ref{main_theorem}) holds for more general surfaces, as will be explained there.
\end{remark}
To any such pair of quadratic forms (or equivalently, to any surface $\Sigma(Q_1,Q_2)$) we associate the operator $\mathcal{T} = \mathcal{T}_{Q_1, Q_2}$, acting on (Schwartz) functions $f : \mathbb{R}^{d+2} \to \mathbb{C}$, given by 
\begin{equation}\label{eq:definition_restricted_2-plane_transform}
\mathcal{T}f(x,\xi) := \iint_{|s|,|t|\leq 1} f(x - s\, \nabla Q_1(\xi) - t  \, \nabla Q_2(\xi), s, t)\,ds\,dt, 
\end{equation}
where $x \in \mathbb{R}^d, \xi \in [-1,1]^d$. The operator $\mathcal{T}$ is a (local) $2$-plane transform in a restricted set of directions parametrised by $\xi$: indeed, the $2$-plane in question is given by 
\[ \pi_{x,\xi} := \{ (x,0,0) + s(-\nabla Q_1(\xi), 1, 0) + t(- \nabla Q_2(\xi),0,1) : s,t \in \mathbb{R}\}; \]
moreover, it is readily verified that $\pi_{x,\xi}$ is normal to the tangent plane of $\Sigma(Q_1,Q_2)$ at the point $\phi(\xi)$. Notice that in \eqref{eq:definition_restricted_2-plane_transform} we are not integrating with respect to the Lebesgue measure on the $2$-plane as one does in $T_{n,2}$, but the $ds\,dt$ measure is nevertheless comparable to it since $\xi$ is bounded, so that, if we were to extend the integration in \eqref{eq:definition_restricted_2-plane_transform} to all $s, t\in\mathbb{R}$, we would have 
\[ \mathcal{T}f(x, \xi) \leq T_{d+2,2}f(\pi_{x,\xi}) \lesssim_{Q_1,Q_2} \mathcal{T}f(x,\xi). \]
To gauge the severity of the restriction in directions, notice that the Grassmannian $\operatorname{Gr}(2,d+2)$ of $2$-dimensional linear subspaces of $\mathbb{R}^{d+2}$ has dimension $2d$, whereas the submanifold of $\operatorname{Gr}(2,d+2)$ given by the directions of the family of $2$-planes $\pi_{x,\xi}$ above is parametrised by $\xi$ and thus has dimension at most $d$.\par 
We are interested in the boundedness properties of $\mathcal{T}$ and how these relate to how well-curved the surface $\Sigma(Q_1,Q_2)$ is. A general collection of estimates one can study are the mixed-norm ones, which we are now going to introduce. Let $q,r \geq 1$ and define for any $F : \mathbb{R}^d \times [-1,1]^d \to \mathbb{C}$ its $L^q(L^r)$ mixed-norm\footnote{With this order of integration, this is sometimes called the \emph{Kakeya-order} mixed-norm.} to be
\begin{equation}\label{eq:mixed_norm_definition}
\|F\|_{L^q(L^r)} := \Big(\int_{[-1,1]^d} \Big(\int_{\mathbb{R}^d} |F(x,\xi)|^r \,dx\Big)^{q/r} \,d\xi\Big)^{1/q} 
\end{equation}
(notice that when $q=r$ the $L^q(L^q)$-norm is simply the usual $L^q$-norm). For exponents $p,q,r \geq 1$, we say that $\mathcal{T}$ satisfies the mixed-norm estimate $L^p \to L^q(L^r)$ if we have the a-priori estimate
\begin{equation}\label{eq:generic_mixed_norm_estimate}
\|\mathcal{T} f\|_{L^q(L^r)} \lesssim_{p,q,r} \|f\|_{L^p}. 
\end{equation}
For the rest of the paper we will make the assumption that $f$ is supported on, say, $B(0,C) \times [-1,1]^{2}$ for some $C>0$. Due to the local nature of the operator $\mathcal{T}$, this assumption can be removed when $r\ge q \ge p$ by a standard localisation argument.
\begin{remark}
By a standard duality and discretisation argument, any estimate of the form \eqref{eq:generic_mixed_norm_estimate} translates into a Kakeya-type bound for collections of $\delta \times \ldots \times \delta \times 1 \times 1$ slabs associated to $\Sigma(Q_1,Q_2)$; see Section \ref{section:motivation} for details (in particular Corollary \ref{corollary:Kakeya_2_slabs} there) and an application.
\end{remark}
Testing the mixed-norm inequalities \eqref{eq:generic_mixed_norm_estimate} against some simple geometric examples leads to a conjectural range of boundedness as will now be described. Let $0 < \delta < 1$ and let $B_n(r)$ denote the $n$-dimensional ball of radius $r$ centred at the origin. We use $A,B$ in place of $\nabla^2 Q_1, \nabla^2 Q_2$ for convenience. Observe that for $|s|\lesssim \|A\|^{-1} \delta$ and $|t|\lesssim \|B\|^{-1} \delta$ we have $|x - sA \xi - tB\xi|\leq \delta$ for all $|x| \lesssim \delta$ and all $\xi \in [-1,1]^d$. Therefore 
\[ \mathcal{T}\mathbf{1}_{B_{d+2}(\delta)}(x,\xi) \gtrsim \delta^2 \, \mathbf{1}_{B_{d}(O(\delta))}(x) \, \mathbf{1}_{[-1,1]^d}(\xi), \]
so that for \eqref{eq:generic_mixed_norm_estimate} to hold as $\delta \to 0$ we see with a simple computation that we must have 
\[ 2 + \frac{d}{r} \geq \frac{d+2}{p}. \]
For our second example, let $S_\delta$ denote the ``slab''
\[ S_\delta := \{ (x - sA\xi - tB\xi, s,t) : |s|,|t|\sim 1,\, x,\xi \in B_d(\delta)\}, \]
and observe that $|S_\delta| \lesssim \delta^d$ by similar considerations as above. Clearly we have 
\[ \mathcal{T}\mathbf{1}_{S_\delta}(x,\xi) \gtrsim \mathbf{1}_{B_d(\delta)}(x) \, \mathbf{1}_{B_d(\delta)}(\xi), \]
and thus if estimate \eqref{eq:generic_mixed_norm_estimate} is to hold as  $\delta \to 0$ we obtain a second necessary condition. The two conditions together are then 
\begin{equation}\label{eq:mixed_necessary_conditions}
\begin{cases}
\begin{aligned}
2 + \dfrac{d}{r} &\geq \dfrac{d+2}{p}, \\
\dfrac{1}{r} + \dfrac{1}{q} &\geq \dfrac{1}{p}. 
\end{aligned}
\end{cases}
\end{equation}\par
%
%\medskip
%
\begin{remark}\label{remark:trivial_estimates} We record the following trivial facts about certain exponents in the range allowed by \eqref{eq:mixed_necessary_conditions}:
\begin{enumerate}[i)]
\item inequality \eqref{eq:generic_mixed_norm_estimate} is certainly satisfied for $p=\infty$ and for every $1 \leq q, r \leq \infty$ (recall that we are assuming $f$ is supported in $B(0,C) \times [-1,1]^2$);
\item inequality \eqref{eq:generic_mixed_norm_estimate} is certainly satisfied for $p=r=1$ and for every $1 \leq q \leq \infty$; 
\item if inequality \eqref{eq:generic_mixed_norm_estimate} holds for exponents $(p,q,r)$ then it also holds for any exponents $(p,\tilde{q},r)$ with $1 \leq \tilde{q} \leq q$ (by H\"{o}lder inequality).
\end{enumerate} 
\end{remark}
We conjecture that when $\Sigma(Q_1,Q_2)$ is well-curved (in a sense to be made precise shortly; see Definition \ref{defn:well_curved} of next subsection) then the necessary conditions \eqref{eq:mixed_necessary_conditions} are also sufficient, with the possible exception of the endpoint $L^{(d+2)/2} \to L^{(d+2)/2}(L^\infty)$. In this paper we will concern ourselves mainly with non-mixed-norm estimates, that is estimates with $q = r$ (this is because mixed-norm estimates are not accessible with the methods we employ, at least not without significant reworking); in this case the necessary conditions are rewritten as 
\[ 2 + \frac{d}{q} \geq \frac{d+2}{p}, \qquad \frac{2}{q} \geq \frac{1}{p}. \]
As described in the next subsection, we are able to confirm the conjecture in the non-mixed-norm range given by these conditions, with the exclusion of a critical line. By interpolation with the trivial inequalities observed above, one also obtains a range of mixed-norm inequalities as a consequence.

\subsection{Main results}\label{section:main_results}
In order to state our main results, we will now clarify the notion of curvature that we are going to employ. It is based upon P.\@ Gressman's work in \cite{Gressman2019}, in which a construction was provided that, given a submanifold $\mathcal{M}$ of $\mathbb{R}^n$, produces a unique (up to multiplicative constants) surface measure $\nu_{\mathcal{M}}$ (that is, a measure with support on $\mathcal{M}$ and absolutely continuous with respect to the standard surface measure) which is equi-affine invariant.\footnote{That is, if $T$ is an affine transformation of $\mathbb{R}^n$ that preserves volumes one has $\nu_{T(\mathcal{M})}(T(E)) = \nu_{\mathcal{M}}(E)$ for all Borel sets $E$.} Moreover, the measure $\nu_{\mathcal{M}}$ satisfies an affine curvature condition of the form $\nu_{\mathcal{M}}(R) \lesssim |R|^{\alpha}$ for every rectangle $R$ in $\mathbb{R}^n$ (for a specific value of $\alpha$ that depends only on $n$ and $\dim \mathcal{M}$), and is the largest such measure up to multiplicative constants. Details on Gressman's construction will be provided in Section \ref{section:affine_curvature_GIT}.
\begin{definition}\label{defn:well_curved}
We say that a submanifold $\mathcal{M}$ of $\mathbb{R}^n$ is \emph{well-curved} if the density of its affine invariant surface measure $\nu_{\mathcal{M}}$ (with respect to the standard surface measure $d\sigma$) does not vanish anywhere on $\mathcal{M}$. If the density of $\nu_{\mathcal{M}}$ vanishes identically, we say that $\mathcal{M}$ is \emph{flat}.
\end{definition}
When the submanifold $\mathcal{M} \subset \mathbb{R}^n$ has codimension $1$ or $n-1$, the measure $\nu_{\mathcal{M}}$ corresponds respectively to the affine hypersurface measure and the affine arclength (see Theorem 1, part (4) of \cite{Gressman2019}). In these two extremal cases, the submanifold is then well-curved if the Gaussian curvature is non-vanishing or if the torsion is non-vanishing, respectively -- thus recovering the common notions of well-curvedness for such codimensions present in the literature. Definition \ref{defn:well_curved} should also be compared to the curvature assumptions present in the examples of restricted $k$-plane transforms listed at the beginning of this section.\par 
In the case of the compact quadratic surfaces $\mathcal{M} = \Sigma(Q_1,Q_2)$ we have that $d\xi/d\sigma$ is bounded away from zero, and therefore $\Sigma(Q_1,Q_2)$ is well-curved according to our definition if and only if $d\nu_{\mathcal{M}}/d\xi$ does not vanish. However, it is shown in \cite{Gressman2019} (see also Section \ref{section:affine_curvature_GIT}) that, for a surface in such a form, the density $d\nu_{\mathcal{M}}/d\xi$ is actually a constant that depends only on $Q_1,Q_2$, and therefore $\Sigma(Q_1,Q_2)$ is well-curved if and only if that constant is non-zero -- and if it is zero, then the surface is flat. Thus in our quadratic case the well-curved/flat distinction of Definition \ref{defn:well_curved} will be a perfect dichotomy.\par
We can now state our main result, which connects the boundedness properties of operators \eqref{eq:definition_restricted_2-plane_transform} to the curvature of $\Sigma(Q_1,Q_2)$.
\begin{theorem}[well-curved surfaces]\label{main_theorem}
Let $Q_1, Q_2$ be quadratic forms on $\mathbb{R}^d$ and suppose that the quadratic surface $\Sigma(Q_1,Q_2)$ is well-curved. Then, for every $1\leq p,q \leq \infty$ such that 
\[ 2 + \frac{d}{q} > \frac{d+2}{p} \quad \text{ and } \quad \frac{2}{q} \geq \frac{1}{p}, \]
we have 
\[ \|\mathcal{T}f\|_{L^q} \lesssim_{p,q, Q_1, Q_2} \|f\|_{L^p} \]
for every function $f$ supported in $B(0,C) \times [-1,1]^2$.\par
If instead the surface $\Sigma(Q_1,Q_2)$ is not well-curved (hence flat), then every $L^p \to L^q$ estimate with $(p,q)$ sufficiently close to the endpoint $\big(\tfrac{d+4}{4}, \tfrac{d+4}{2}\big)$ is false.
\end{theorem}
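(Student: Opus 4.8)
I would first note that the class of operators $\mathcal{T}_{Q_1,Q_2}$ is, up to multiplicative constants, invariant under $\mathrm{GL}_2$-changes of the pair $(Q_1,Q_2)$ and under $\mathrm{GL}_d$-changes of the variable $\xi$, so that every claim below can be made after bringing the pencil $sA+tB$ into a convenient normal form.

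\emph{The well-curved case.} By real interpolation against the trivial bounds of Remark~\ref{remark:trivial_estimates}, and because the stated range of exponents is open, it is enough to establish the restricted weak-type inequality
\[
\langle \mathcal{T}\mathbf{1}_E,\mathbf{1}_F\rangle \ \lesssim_{p,q}\ |E|^{1/p}\,|F|^{1/q'}
\]
for all $E\subseteq B(0,C)\times[-1,1]^2$ and $F\subseteq\mathbb{R}^d\times[-1,1]^d$ and each $(p,q)$ in that range; real interpolation then upgrades this to the strong-type statement. I would prove the weak-type inequality by Christ's Method of Refinements: after iterated pigeonholing one refines $E$ and $F$ to comparable subsets on which the incidence relation defining $\mathcal{T}$ is uniformly dense, then fixes a generic base point and follows the flow through alternating $(s,t)$-steps and $\xi$-steps. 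Tracing $\xi_1\mapsto x_1=x_0+(s_1A+t_1B)(\xi_1-\xi_0)$ and its iterates, the change of variables from the chain of intermediate $\xi_j$'s to the chain of intermediate $x_j$'s (at fixed $(s_j,t_j)$) has Jacobian $\prod_j\det(s_jA+t_jB)$; one then splits the parameter domain according to whether all the factors $|\det(s_jA+t_jB)|$ exceed a threshold $\lambda$ or not.

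\emph{The sublevel-set input (the main obstacle).} On the ``large Jacobian'' region the change of variables forces the chain to land in $E$ (resp.\ $F$), which after optimising in $\lambda$ yields the desired inequality --- provided the complementary region is negligible, which reduces to a sublevel-set estimate for $\det(sA+tB)$ on $[-1,1]^2$ (more precisely, to a joint, multi-step form of it). Here is where curvature enters: by the characterisation obtained in Section~\ref{section:affine_curvature_GIT}, $\Sigma(Q_1,Q_2)$ is well-curved exactly when $\det(sA+tB)$, viewed as a binary form in $(s,t)$, is GIT-semistable --- that is, it is not identically zero and has no root of multiplicity exceeding $d/2$ --- and this is precisely the hypothesis guaranteeing
\[
\big|\{(s,t)\in[-1,1]^2 : |\det(sA+tB)|\le\lambda\}\big|\ \lesssim\ \lambda^{2/d}
\]
(up to at most a logarithmic correction when a root has multiplicity exactly $d/2$). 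Proving this in the sharp multi-step form required --- with exponents tight enough that the family of exponents for which the bound is obtained exhausts the whole open range up to the critical line --- is, I expect, the delicate point; it is carried out by a linear-programming argument over the Newton data of $\det(sA+tB)$. Granting it, optimising the split gives the restricted weak-type inequality throughout the range, the strict inequality $2+d/q>(d+2)/p$ being exactly what absorbs the logarithmic losses and keeps one off the critical line.

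\emph{The flat case.} Suppose now that $\Sigma(Q_1,Q_2)$ is flat. By the same characterisation, $\det(sA+tB)$ is then GIT-unstable: identically zero, or with a root of multiplicity $m>d/2$. In the latter case, after a $\mathrm{GL}_2$-change placing that root at $[0:1]$, one is reduced to a situation in which some combination of $A$ and $B$ --- say $B=\nabla^2 Q_2$ itself --- has rank $r<d/2$, so that $W:=\ker B$ has dimension $d-r>d/2$. For $\xi\in W$ one has $\nabla Q_2(\xi)=B\xi=0$, so the normal $2$-plane $\pi_{x,\xi}$ translates in space only along $\nabla Q_1(\xi)$ and the parameter $t$ contributes no spatial displacement at all: the $t$-direction has become genuinely flat. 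Testing $\mathcal{T}$ against suitable functions that are $\mathbf{1}_{[-1,1]}$ in the $t$-variable and concentrated near $A(W)$ in the remaining spatial variables, and restricting attention to $\xi$ in a small neighbourhood of $W$, bounds $\mathcal{T}\mathbf{1}_E$ from below by a constant multiple of the X-ray transform along normals to the $(d-r)$-dimensional graph of $Q_1|_W$ in $\mathbb{R}^{(d-r)+1}$. The small-ball necessary condition for that auxiliary transform is $1+\tfrac{d-r}{q}\ge\tfrac{(d-r)+1}{p}$; its left side minus its right side equals, at the endpoint $\big(\tfrac{d+4}{4},\tfrac{d+4}{2}\big)$, the quantity $\tfrac{2r-d}{d+4}$, which is negative precisely because $r<d/2$. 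Hence, by the standard small-ball scaling example and continuity of the exponents in $(p,q)$, the bound $\mathcal{T}\colon L^p\to L^q$ fails throughout a neighbourhood of the endpoint. (The prototype is $Q_2\equiv 0$, where the above reduction is exact; the remaining cases --- a nontrivial nilpotent part in the pencil, or $\det(sA+tB)\equiv 0$, in which $\Sigma$ is foliated by affine subspaces --- require additional arguments in the same spirit.)
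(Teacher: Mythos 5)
Your treatment of the positive half is essentially the paper's own argument: reduce to restricted weak-type bounds, run Christ's Method of Refinements so that the Jacobian that appears is $\det(s\nabla^2Q_1+t\nabla^2Q_2)$ (in fact a single two-step map $\Psi(\eta,s,t)=\gamma(\gamma^\ast((y_0,s_0,t_0),\eta),(s,t))$ suffices here, since $(\eta,s,t)$ already has the right dimension $d+2$ -- no iterated chain is needed), invoke the GIT characterisation of well-curvedness (condition \eqref{condition:well-curvedness}) to get the sublevel-set bound $\lesssim\delta^{2/d}\log^+1/\delta$ via a Farkas/linear-programming factorisation, and interpolate with the trivial estimates; your claim that the range is ``open'' is slightly off (the segment of the critical line $2/q=1/p$ with $q>(d+4)/2$ is included), but since you prove restricted weak type on that line as well, interpolation along the line recovers the strong-type bounds there, so this is harmless.

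The negative half, however, has a genuine gap. Your reduction asserts that a root of $\det(sA+tB)$ of multiplicity $m_\ast>d/2$ can, after a $GL_2$-change, be realised as a member of the pencil of rank $r<d/2$. This conflates algebraic and geometric multiplicity: the corank of the pencil member at the root equals the number of Jordan blocks of $AB^{-1}$ at that eigenvalue, not the total block size $m_\ast$. For instance, with $d=2$, $A=\operatorname{diag}(1,0)$ and $B=\begin{pmatrix}0&1\\1&0\end{pmatrix}$ one has $\det(sA+tB)=-t^2$, so $m_\ast=2>d/2$, yet every member of the pencil has rank $\geq 1=d/2$, so your kernel subspace $W$ with $\dim W>d/2$ does not exist and the ``inert $t$-direction'' picture breaks down. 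This nilpotent situation is exactly the case you defer to ``additional arguments in the same spirit,'' but it is where the actual work lies: the paper's counterexample (Section \ref{section:flat_surfaces}, Case I) reduces $(A,B)$ to the form $(\widetilde{\bm J},\widetilde{\bm I})$, builds parabolic boxes adapted to each generalised eigenspace of eigenvalue $\lambda_\ast$ together with the strip $\{|t|<\delta\}$, and derives the necessary condition \eqref{eq:flat_surfaces_necessary_condition_1}, which depends on the individual block dimensions $n_j$ (not just on $m_\ast$ or on a rank) and which fails at the endpoint by the margin $(d-2m_\ast)/(d+4)<0$. Likewise the case $\det(sA+tB)\equiv 0$ needs its own construction (the $V$/$H$ subspace argument of Case II); it is not covered by your X-ray reduction either. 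As written, your proposal proves failure near the endpoint only when the relevant eigenvalue is semisimple (all blocks of size one), so the second assertion of the theorem is not established.
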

The examples that yield the conjectural range \eqref{eq:mixed_necessary_conditions} show that the range of exponents in the theorem above is sharp, save perhaps for the missing critical line $2 + d/q = (d+2)/p$. The theorem is obtained by interpolating the trivial $L^p \to L^1$ and $L^{\infty} \to L^q$ estimates from Remark \ref{remark:trivial_estimates} with restricted weak-type estimates along the critical line $2/q = 1/p$ and arbitrarily near the endpoint estimate $L^{(d+4)/4} \to L^{(d+4)/2}$. The latter are obtained using Christ's Method of Refinements, but alternative proofs can be given using techniques of Gressman from either \cite{Gressman2022} or \cite{Gressman2022b}; see Remark \ref{remark:other_proofs} in this regard.\par 
We observe that in general it is possible with our methods to obtain the restricted weak-type endpoint estimate as well, unless the surface $\Sigma(Q_1,Q_2)$ belongs to a certain class that can be described explicitly; this description relies upon Theorem \ref{thm:well_curvedness_characterisation} below and will be given in Remark \ref{remark:surfaces_without_log_loss} of Section \ref{section:sublevel_set_estimates}. As stated, the range of exponents is also sharp in the curvature condition, in the sense that the range of true estimates is necessarily smaller when the surface is flat (this will be proven in Section \ref{section:flat_surfaces} -- see also Theorem \ref{thm:flat_surfaces} below). In particular, Theorem \ref{main_theorem} shows that any $L^p \to L^q$ estimate for $\mathcal{T}$ with $(p,q)$ near the endpoint is equivalent to the well-curvedness of $\Sigma(Q_1,Q_2)$ (see \cite{IosevichLu} for a result of similar flavour in the context of Fourier Restriction for hypersurfaces).\par
Our methods are sufficiently stable under perturbation that we are also able to extend Theorem \ref{main_theorem} to more general codimension $2$ surfaces. Indeed, let $\varphi_1,\varphi_2 : \mathbb{R}^d \to \mathbb{R}$ be $C^2$ functions such that $\nabla \varphi_1(0) = \nabla \varphi_2(0) = 0$ and let $\Sigma(\varphi_1,\varphi_2)$ denote the surface parametrised by 
\[ (\xi, \varphi_1(\xi), \varphi_2(\xi)), \qquad \xi \in [-\epsilon, \epsilon]^d, \]
where $\epsilon > 0$ is sufficiently small depending on $\varphi_1, \varphi_2$. The analogue of operator \eqref{eq:definition_restricted_2-plane_transform}, denoted by $\mathcal{T}_{\varphi_1,\varphi_2}$, is given by
\[ \mathcal{T}_{\varphi_1,\varphi_2}f(x,\xi) := \iint_{|s|,|t|\leq 1} f(x - s\, \nabla \varphi_1(\xi) - t  \, \nabla \varphi_2(\xi), s, t)\,ds\,dt. \]
\begin{manualtheorem}{\ref{main_theorem}${}^\prime$}[General well-curved surfaces]\label{thm:general_well_curved_surfaces}
Let $\varphi_1,\varphi_2$ be as above and suppose that $\Sigma(\varphi_1, \varphi_2)$ is well-curved at $\xi=0$. Then, for every $1\leq p,q \leq \infty$ such that 
\[ 2 + \frac{d}{q} > \frac{d+2}{p} \quad \text{ and } \quad \frac{2}{q} \geq \frac{1}{p}, \]
we have 
\[ \|\mathcal{T}_{\varphi_1,\varphi_2} f\|_{L^q} \lesssim_{p,q, \varphi_1, \varphi_2} \|f\|_{L^p} \]
for every function $f$ supported in $B(0,C) \times [-1,1]^2$.
\end{manualtheorem}
The range of exponents above is identical to the one given in Theorem \ref{main_theorem}. To show Theorem \ref{thm:general_well_curved_surfaces}, only small adjustments need to be made to the argument for the quadratic surface case -- the necessary modifications will be sketched in Appendix \ref{appendix:modification_general_surface}.\par 
By standard interpolation theory for mixed-norm spaces (see e.g. \cite{BenedekPanzone}), one obtains from the strong-type inequalities of Theorem \ref{main_theorem} a whole range of mixed-norm estimates of the form \eqref{eq:generic_mixed_norm_estimate}, upon interpolation with the (strong-type) trivial estimates in Remark \ref{remark:trivial_estimates}.
\begin{corollary}[mixed-norm range]
Let $Q_1, Q_2$ be quadratic forms on $\mathbb{R}^d$ and suppose that the quadratic surface $\Sigma(Q_1,Q_2)$ is well-curved. Then for every $1\leq p,q,r \leq \infty$ such that 
\[ 2 + \frac{d}{r} > \frac{d+2}{p}, \qquad \frac{1}{r} + \frac{1}{q} \geq \frac{1}{p}, \quad \text{ and } \quad \frac{2}{r} \geq \frac{1}{p}, \]
we have 
\[ \|\mathcal{T}f\|_{L^q(L^r)} \lesssim_{p,q,r, Q_1, Q_2} \|f\|_{L^p} \]
for every function $f$ supported in $B(0,C) \times [-1,1]^d$.
\end{corollary}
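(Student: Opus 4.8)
The plan is to obtain this corollary purely by interpolation, with no new analytic input: I would combine the strong-type non-mixed-norm bounds of Theorem~\ref{main_theorem} with the trivial strong-type bounds recorded in Remark~\ref{remark:trivial_estimates} --- concretely, $L^\infty \to L^q(L^r)$ for all $q,r$ (genuinely strong-type because, under the support hypothesis, $\mathcal{T}f$ is supported in a set of finite measure) and $L^1 \to L^\infty(L^1)$ (this is part ii) combined with the boundedness of the $\xi$-domain) --- via the interpolation theory for mixed-norm Lebesgue spaces of Benedek and Panzone \cite{BenedekPanzone}. It is convenient to write $(a,b,c) := (1/p, 1/q, 1/r)$, so that the claimed region is $\mathcal{R} := \{2 + dc > (d+2)a,\ b+c \ge a,\ 2c \ge a\} \cap [0,1]^3$; one checks at once that $a < 1$ throughout $\mathcal{R}$, since $a=1$ would force $c>1$ by the first constraint.

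First I would use Remark~\ref{remark:trivial_estimates} iii) (monotonicity in $q$) to reduce to the case $b = b_\ast := \max(0, a-c)$: every admissible $q$ satisfies $1/q \ge b_\ast$, so it suffices to prove the estimate at $(a, b_\ast, c)$ and then lower $q$. I would then split according to the sign of $a - c$. If $c \ge a$ (that is, $r \le p$), then $b_\ast = 0$ and the condition $2 + dc > (d+2)a$ is automatic since $a < 1$; thus it only remains to realise $L^p \to L^\infty(L^r)$ as the interpolant, with parameter $\theta = a$, of $L^\infty \to L^\infty(L^{r_0})$ and $L^1 \to L^\infty(L^1)$, and the required identity $1/r = (1-a)/r_0 + a$ has a solution $r_0 \in [1,\infty]$ precisely because $a \le c \le 1$. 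If instead $c < a$ (that is, $p < r$), then $b_\ast = a - c > 0$, and I would interpolate the bound $L^{p_0} \to L^{q_0}$ of Theorem~\ref{main_theorem} against $L^1 \to L^\infty(L^1)$ with parameter $\theta := 2c - a$, which lies in $[0,1)$ thanks to $2c \ge a$ and $c < a \le 1$. A direct computation shows that this choice forces $2/q_0 = 1/p_0$ (placing $(p_0,q_0)$ on the critical line of Theorem~\ref{main_theorem}) and that the remaining hypothesis $2 + d/q_0 > (d+2)/p_0$ is equivalent to $2 + dc > (d+2)a$; hence $(p_0,q_0)$ lies in the range of Theorem~\ref{main_theorem} exactly when $(a,b_\ast,c) \in \mathcal{R}$, and the interpolant is the desired $L^p \to L^q(L^r)$ estimate at $(a,b_\ast,c)$. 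Decreasing $q$ via Remark~\ref{remark:trivial_estimates} iii) then covers all admissible $q$.

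Since every input estimate above is strong-type, Benedek--Panzone interpolation yields strong-type mixed-norm bounds, as required. The only delicate point --- and the nearest thing to an obstacle --- is purely clerical: one must verify that the interpolation parameter $\theta$ and the auxiliary exponents $p_0, q_0, r_0$ remain in their admissible ranges, and, above all, carry out the algebraic translations of the constraints (that $2/q_0 \ge 1/p_0$ corresponds to $b + c \ge a$ and that $2 + d/q_0 > (d+2)/p_0$ corresponds to $2 + dc > (d+2)a$ under the chosen interpolation). This is a finite convexity / linear-programming verification and involves no genuine difficulty.
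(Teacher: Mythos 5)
Your proposal is correct and is essentially the paper's own argument: the paper proves the corollary precisely by Benedek--Panzone mixed-norm interpolation of the strong-type bounds of Theorem \ref{main_theorem} (on the critical line $2/q=1/p$) with the trivial strong-type estimates of Remark \ref{remark:trivial_estimates}, leaving the exponent bookkeeping implicit. Your case split and the verification that $\theta = 2c-a$ lands on the critical line with $2+d/q_0 > (d+2)/p_0 \Leftrightarrow 2+dc > (d+2)a$ are exactly the routine checks the paper omits, and they are carried out correctly.
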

The proof of Theorem \ref{main_theorem} rests on an algebraic characterisation of well-curvedness which is enabled by a connection between Gressman's affine invariant measures and Geometric Invariant Theory; it is of independent interest. Specifically, we prove the following fact.
\begin{theorem}\label{thm:well_curvedness_characterisation}
Let $Q_1, Q_2$ be quadratic forms on $\mathbb{R}^d$. The quadratic surface $\Sigma(Q_1,Q_2)$ is well-curved if and only if the following condition is satisfied:
\begin{equation}
\parbox{0.8\textwidth}{%
the homogeneous polynomial in $s,t$ given by $\det(s\nabla^2 Q_1 + t \nabla^2 Q_2)$ does not vanish identically and does not admit any root of multiplicity larger than $d/2$.
} \tag{\textbf{M}}\label{condition:well-curvedness}
\end{equation}
\end{theorem}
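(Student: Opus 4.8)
The plan is to exploit Gressman's explicit formula for the density $d\nu_{\mathcal{M}}/d\xi$ in the case of a codimension-$2$ quadratic graph $\Sigma(Q_1,Q_2)$, and to reinterpret the non-vanishing of that density as a condition on the binary form $P(s,t) := \det(s\nabla^2 Q_1 + t\nabla^2 Q_2)$. Recall from Section \ref{section:affine_curvature_GIT} (Gressman's construction) that the affine invariant surface measure is built from a polynomial functional in the derivatives of the parametrisation which is a relative invariant under the action of the affine group on $\mathbb{R}^{d+2}$ (equivalently, under reparametrisations of the surface and affine maps of the normal directions). For a quadratic graph the only non-trivial derivative data is the pair of Hessians $(A,B) = (\nabla^2 Q_1, \nabla^2 Q_2)$, so the density is a polynomial in the entries of $A$ and $B$ that is invariant (up to a power of the determinant) under the two relevant group actions: the right action $(A,B) \mapsto (g^\top A g, g^\top B g)$ of $\mathrm{GL}_d$ coming from linear reparametrisations $\xi \mapsto g\xi$, and the left action $(A,B) \mapsto (aA + bB,\ cA + dB)$ of $\mathrm{GL}_2$ coming from affine changes of the two normal coordinates $(s,t)$. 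This is precisely the setting of the classical Geometric Invariant Theory of pencils of quadratic forms, and the ring of such joint semi-invariants is well understood.

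The key step is to identify the density with (a power of) the discriminant of $P(s,t)$. First I would observe that $P(s,t) = \det(sA + tB)$ is a binary form of degree $d$ in $(s,t)$ whose coefficients are polynomials in the entries of $A,B$; under $g \in \mathrm{GL}_d$ it transforms by $P \mapsto (\det g)^2 P$, and under $h \in \mathrm{GL}_2$ acting on $(s,t)$ it transforms the way a binary $d$-form does, i.e.\ $P$ is a covariant. Hence any invariant of the binary form $P$ — in particular its discriminant $\mathrm{disc}(P)$, which vanishes exactly when $P$ has a repeated root — pulls back to a joint semi-invariant of the pencil $(A,B)$ of the correct weight. By matching degrees and weights with Gressman's normalisation (Theorem 6 of \cite{Gressman2019} pins down the density as the unique, up to scalar, semi-invariant of the appropriate bidegree), one concludes that $d\nu_{\mathcal{M}}/d\xi$ is a nonzero constant multiple of $\mathrm{disc}(P)^{k}$ for the relevant exponent $k$, possibly up to the factor accounting for the case where $P \equiv 0$. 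Therefore the density is nonzero if and only if $P \not\equiv 0$ and $P$ has no repeated root, which over $\mathbb{C}$ for a binary $d$-form is exactly condition \eqref{condition:well-curvedness} with the multiplicity threshold $d/2$ — the bound $d/2$ rather than $1$ appears because of the squaring $P \mapsto (\det g)^2 P$: a root of multiplicity $m$ of $P$ contributes, after extracting the correct power, a vanishing of order proportional to $\max(0, m - d/2)$ (equivalently, a root forces $sA+tB$ to be degenerate along a subspace whose dimension governs the order of vanishing, and only when that dimension exceeds $d/2$ does the relevant semi-invariant actually vanish). I would make this degree bookkeeping precise, treating the parity of $d$ and the boundary case $m = d/2$ carefully.

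The main obstacle I expect is exactly this last point: showing that the correct power of the discriminant is the one whose vanishing locus corresponds to roots of multiplicity $> d/2$ rather than $\geq$ some other threshold, and handling the degenerate stratum $P \equiv 0$ (where $sA + tB$ is singular for every $(s,t)$) consistently — for then one must check that Gressman's density genuinely vanishes there, not merely that the discriminant does. To pin down the exponent I would compute the density on a one-parameter family of explicit pencils, e.g.\ $A = \mathrm{diag}(1,\dots,1)$ and $B$ a diagonal matrix with eigenvalues clustering to create a root of $P$ of prescribed multiplicity $m$, read off the order of vanishing of $d\nu_{\mathcal{M}}/d\xi$ as the cluster collapses, and match it against $m$; the threshold $d/2$ should emerge from this computation. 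An alternative, cleaner route — which I would pursue in parallel — is to use the characterisation of $d\nu_{\mathcal{M}}/d\xi$ directly in terms of an integral or a Hessian-type determinant of the parametrisation (as in Gressman's Theorem 6), expand it for the quadratic graph, and recognise the resulting expression as a resultant/discriminant of $P$ by a direct linear-algebra manipulation; this avoids invoking the full GIT classification but requires more hands-on computation. Either way, once the density is identified with a power of $\mathrm{disc}(P)$ modulo the $P\equiv 0$ caveat, the equivalence with \eqref{condition:well-curvedness} is immediate from the elementary theory of binary forms.
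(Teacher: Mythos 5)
The central identification your argument rests on --- that $d\nu_{\Sigma}/d\xi$ is a nonzero constant multiple of $\mathrm{disc}(P)^{k}$ for $P(s,t)=\det(sA+tB)$ --- is false for $d\geq 4$, and no degree bookkeeping can repair it. The discriminant of a binary $d$-form vanishes as soon as the form has \emph{any} repeated root, whereas well-curvedness only fails when some root has multiplicity strictly greater than $d/2$. Concretely, take $d=4$, $A=I_4$, $B=\mathrm{diag}(0,0,1,1)$: then $P(s,t)=s^2(s+t)^2$ has two roots of multiplicity exactly $d/2=2$, so by the theorem the surface is well-curved and the density is a nonzero constant, yet $\mathrm{disc}(P)=0$; hence the density cannot be any power of the discriminant. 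Your heuristic for the threshold (``a root of multiplicity $m$ contributes a vanishing of order proportional to $\max(0,m-d/2)$'') is not a property that any fixed power of $\mathrm{disc}(P)$ can have. More fundamentally, the density is not a single (semi-)invariant polynomial in $(A,B)$ at all: by Lemma \ref{lemma:Gressman_GIT_lemma} it is comparable to $\max_j |P_j(\mathscr{A}_{A,B})|^{1/\deg P_j}$ over a full set of generators of the invariant ring, and its zero locus is the GIT-unstable locus (the null cone) for the $SL(\mathbb{R}^d)\times SL(\mathbb{R}^2)$ action, which for $d\geq 4$ is not a hypersurface and in particular is not cut out by the discriminant; there is no uniqueness statement in \cite{Gressman2019} of the kind you invoke.

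Because of this, the actual proof has to do genuinely more work than your plan allows. The paper first reduces (via the First Fundamental Theorem for $SL(2)$-invariants and Lemma \ref{lemma:Gressman_GIT_lemma}) to the statement that the density is nonzero if and only if $(A,B)$ is $(\rho\times\sigma)$-semistable (Lemma \ref{lemma:GIT_equivalence}), and then proves Proposition \ref{prop::semistability_SL2xSLd_det} by the Hilbert--Mumford criterion: the easy direction uses the equivariance of $\Delta_{A,B}$ together with the classification of unstable binary forms (the $d/2$ threshold arises from the one-parameter subgroup weight computation for the $SL(2)$ factor, not from the weight $(\det g)^2$ of $P$ under reparametrisation); the hard direction requires putting $(A,B)$ into a normal form by writing the Jordan blocks of $AB^{-1}$ as products of two symmetric matrices and exhibiting explicit destabilising one-parameter subgroups, and the degenerate stratum $\det(sA+tB)\equiv 0$ needs a separate analysis of the kernels and images of the pencil. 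None of this is supplied by, or follows from, the proposed discriminant identification, and your ``parallel route'' of recognising the density directly as a resultant/discriminant by direct computation runs into the same counterexample.
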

Here by \emph{root} of a homogeneous polynomial in $\mathbb{R}[s,t]$ we mean a homogeneous linear divisor $as+bt$ in $\mathbb{C}[s,t]$, and by its (algebraic) multiplicity we mean the largest power $m$ such that $(as + bt)^m$ is still a divisor. Theorem \ref{thm:well_curvedness_characterisation} is stated for quadratic forms, but it holds ``pointwise'' for arbitrary $\Sigma(\varphi_1,\varphi_2)$ surfaces: the surface is well-curved if $\det(s \nabla^2 \varphi_1(\xi) + t \nabla^2 \varphi_2(\xi))$ satisfies \eqref{condition:well-curvedness} for every $\xi$.
\begin{example}
Consider the quadratic surfaces $\Sigma(Q_1,Q_2)$ given by 
\[ Q_1(\xi) := \frac{1}{2}\sum_{j=1}^{d} \lambda_j \xi_j^2, \quad Q_2(\xi) := \frac{1}{2}\sum_{j=1}^{d} \mu_j \xi_j^2, \]
where the $\lambda_j, \mu_j$ are real coefficients that for any $j$ are not simultaneously zero. We have 
\[ \det(s\nabla^2 Q_1 + t \nabla^2 Q_2) = \prod_{j=1}^{d} (s \lambda_j + t \mu_j) \]
and thus by Theorem \eqref{thm:well_curvedness_characterisation} the surface $\Sigma(Q_1,Q_2)$ is well-curved if $\#\{ j : [\lambda_j : \mu_j] = [\lambda : \mu] \} \leq d/2$ for all $[\lambda : \mu] \in \mathbb{P}(\mathbb{R}^2)$.
\end{example}
This is not the first instance in which the object $\det(s\nabla^2 Q_1 + t \nabla^2 Q_2)$ and condition \eqref{condition:well-curvedness} have made their appearance in Harmonic Analysis: readers familiar with M.\@ Christ's PhD thesis \cite{ChristThesis} will recognise \eqref{condition:well-curvedness} above as being precisely the condition that yields the sharp $L^p \to L^2$ estimates for the operator of Fourier Restriction to surfaces $\Sigma(Q_1,Q_2)$. Thus, in light of Theorem \ref{thm:well_curvedness_characterisation}, M.\@ Christ's result can be retroactively reformulated as saying that the Fourier Restriction operator $R f := \widehat{f} \,\big|_{\Sigma(Q_1,Q_2)}$ satisfies optimal $L^p \to L^2$ estimates if and only if $\Sigma(Q_1,Q_2)$ is well-curved in the sense of Definition \ref{defn:well_curved}. See Section \ref{section:motivation} for additional details.\par
The characterisation of well-curvedness provided above is quantitative to some extent, and in particular it gives us a way to gauge the ``flatness'' of surfaces which are not well-curved. Indeed, a flat $\Sigma(Q_1,Q_2)$ surface must be such that $\det(s \nabla^2 Q_1 + t \nabla^2 Q_2)$ has a root of multiplicity $m_{\ast} > d/2$, which in particular is the largest of all the root multiplicities. Intuitively, we expect that as the largest multiplicity $m_{\ast}$ increases, the surface gets flatter (with the most extreme case being that in which $\det(s\nabla^2 Q_1 + t\nabla^2 Q_2)$ vanishes identically); consequently, we expect the $L^p \to L^q$ mapping properties of operator \eqref{eq:definition_restricted_2-plane_transform} to worsen. It turns out that indeed this largest multiplicity $m_{\ast}$ controls the surviving range of boundedness of the operators \eqref{eq:definition_restricted_2-plane_transform}, particularly along the critical line $2/q = 1/p$. We have the following partial analogue of Theorem \ref{main_theorem} for flat surfaces.
\begin{theorem}[flat surfaces]\label{thm:flat_surfaces}
Let $Q_1, Q_2$ be quadratic forms on $\mathbb{R}^d$ and suppose that the quadratic surface $\Sigma(Q_1,Q_2)$ is flat but $\det(s\nabla^2 Q_1 + t \nabla^2 Q_2)$ is not identically vanishing. Let $m_{\ast} > d/2$ denote the largest multiplicity among its roots. Then for every $1\leq p,q \leq \infty$ such that 
\[ 1 + \frac{m_{\ast}}{q} \geq \frac{m_{\ast}+1}{p} \quad \text{ and } \quad \frac{2}{q} \geq \frac{1}{p}, \]
with the exception of $p = (m_{\ast}+2)/2$, $q = m_{\ast}+2$, we have 
\[ \|\mathcal{T}f\|_{L^q} \lesssim_{p,q, Q_1, Q_2} \|f\|_{L^p} \]
for every function $f$ supported in $B(0,C) \times [-1,1]^d$. Moreover, every $L^p \to L^q$ estimate with $1 + m_{\ast} / q < (m_{\ast}+1)/p$ and $2/q = 1/p$ is false.\par 
If instead $\det(s\nabla^2 Q_1 + t \nabla^2 Q_2)$ vanishes identically, then there is an $\epsilon = \epsilon_{Q_1,Q_2}$ with $0 < \epsilon < 1$ such that every $L^p \to L^q$ estimate with $\tfrac{2 - \epsilon}{q} < \tfrac{1}{p}$ is false (this includes in particular estimates with $2/q = 1/p$ for $(p,q) \neq (\infty, \infty)$).
\end{theorem}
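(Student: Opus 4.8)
\textbf{Proof plan for Theorem \ref{thm:flat_surfaces}.}

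The plan is to treat the three assertions separately, mirroring the strategy of Theorem \ref{main_theorem} but with the critical exponent dictated by $m_\ast$ rather than by $d$.

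\emph{Positive estimates in the non-degenerate flat case.} By Theorem \ref{thm:well_curvedness_characterisation}, flatness together with $\det(s\nabla^2 Q_1 + t\nabla^2 Q_2)\not\equiv 0$ means this polynomial has a root of multiplicity $m_\ast > d/2$. Just as in the well-curved case, the argument proceeds by Christ's Method of Refinements: one iterates the operator $\mathcal{T}$ (or rather $\mathcal{T}\mathcal{T}^\ast$ or a higher power) to reduce a restricted weak-type bound along the critical line $2/q=1/p$ to a sublevel set estimate for the map $(s,t)\mapsto s\nabla Q_1(\xi)+t\nabla Q_2(\xi)$, whose Jacobian factors through $\det(s\nabla^2 Q_1 + t\nabla^2 Q_2)$. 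Here the key point is that, although the polynomial has a high-multiplicity root, after a suitable rotation in $(s,t)$ we may assume this root is $t$, so that $\det(s\nabla^2 Q_1 + t\nabla^2 Q_2) = s^{m_\ast} R(s,t)$ with $R$ not divisible by $t$ and having all roots of multiplicity at most $m_\ast$ as well; near $t=0$ the relevant sublevel sets then behave like those for the model polynomial $s^{m_\ast}$ on $\mathbb{R}^{m_\ast+1}$, which is exactly the curvature exponent appearing in the conditions $1 + m_\ast/q \ge (m_\ast+1)/p$. One runs the same linear-programming argument as in Section \ref{section:sublevel_set_estimates} with $d$ replaced by $m_\ast$, obtaining the restricted weak-type estimate at $p = (m_\ast+2)/2$, $q = m_\ast+2$ with at most a logarithmic loss, and then interpolates with the trivial $L^\infty\to L^q$ and $L^p\to L^1$ estimates of Remark \ref{remark:trivial_estimates} to get strong-type bounds in the stated open range, the endpoint $p=(m_\ast+2)/2$ being excluded precisely because of that potential log loss.

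\emph{Sharpness along the critical line.} For the converse, one tests $\mathcal{T}$ against the natural shrinking example adapted to the high-multiplicity root. After the rotation making the root equal to $s$, restrict attention to the slab where $|s|\lesssim\delta$ and $|t|\sim 1$; on the set where $\det(s\nabla^2 Q_1 + t\nabla^2 Q_2)$ is small the map $(s,t)\mapsto s\nabla Q_1(\xi)+t\nabla Q_2(\xi)$ is degenerate to order $m_\ast$, so that a characteristic function of a set of measure $\sim\delta^{m_\ast+1}\cdot(\text{full measure in the other variables})$ is carried to a function of size $\sim\delta^{m_\ast}$ on a set of measure $\sim\delta^{m_\ast+1}$ in $(x,\xi)$. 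Plugging into $\|\mathcal{T}f\|_{L^q}\lesssim\|f\|_{L^p}$ and letting $\delta\to 0$ forces $1 + m_\ast/q \ge (m_\ast+1)/p$; combined with the constraint $2/q = 1/p$ on the critical line this gives exactly the stated obstruction, so no estimate with $1 + m_\ast/q < (m_\ast+1)/p$ can hold there.

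\emph{The degenerate case $\det(s\nabla^2 Q_1 + t\nabla^2 Q_2)\equiv 0$.} When the determinant vanishes identically, for every $(s,t)$ the matrix $s\nabla^2 Q_1 + t\nabla^2 Q_2$ is singular, so there is a nonzero vector (depending on $(s,t)$) in its kernel. By a compactness/dimension argument one produces a fixed direction $v\in\mathbb{R}^d$ and a two-dimensional cone of $(s,t)$ on which $(sA+tB)v$ stays in a fixed hyperplane, or more simply one exhibits a linear subspace $W\subset\mathbb{R}^d$ of dimension $\ge 1$ such that the image $\{ sA\xi + tB\xi : |s|,|t|\le 1,\ \xi\in[-1,1]^d\}$ fails to fill out a full-dimensional set at the expected rate. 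Concretely, one builds a test function supported on a slab of the form $\{(x - sA\xi - tB\xi, s,t): |s|,|t|\sim 1,\ x\in B_d(\delta^{1-\epsilon})\times(\text{thin directions}),\ \xi\in B_d(\delta)\}$, whose $\mathcal{T}$-image is comparable to $\mathbf{1}$ on a set of comparatively large measure; tracking the exponents yields a necessary condition strictly worse than $2/q\ge 1/p$, namely $\tfrac{2-\epsilon}{q}<\tfrac1p$ fails, for some $\epsilon = \epsilon_{Q_1,Q_2}>0$ determined by how degenerate the kernel is. In particular every non-trivial estimate on the line $2/q = 1/p$ is destroyed.

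\emph{Main obstacle.} The delicate point is the positive direction: one must run Christ's Method of Refinements and the subsequent linear-programming / sublevel-set estimate in a situation where the curvature is governed by $m_\ast$ rather than $d$, and verify that the high-multiplicity root of $\det(s\nabla^2 Q_1 + t\nabla^2 Q_2)$ does not cause additional losses beyond the single logarithm at the critical endpoint — in particular that the other roots, each of multiplicity $\le m_\ast$, can be absorbed. This requires a careful localisation in the $(s,t)$ variable around the worst root together with a uniform treatment of the remaining factor of the determinant, exactly as in the analysis behind Theorem \ref{main_theorem} but with the roles of the exponents shifted.
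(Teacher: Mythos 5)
Your positive direction matches the paper's: the same two-step Method of Refinements from Section \ref{section:main_proof} is run verbatim, with the Jacobian $\det((s-s_0)\nabla^2 Q_1 + (t-t_0)\nabla^2 Q_2)$ now controlled by the sublevel set estimate of Proposition \ref{prop:sublevel_set_estimate_2} rather than Proposition \ref{prop:sublevel_set_estimate_1}. Two corrections, though. First, in the flat case there is \emph{no} logarithmic loss: Proposition \ref{prop:sublevel_set_estimate_2} gives the clean bound $\delta^{1/m_\ast}$ (the dominant factor $\theta_\ast^{m_\ast}$ can be paired with each remaining root with strictly larger exponent, so Lemma \ref{lemma:sublevel_set_estimate_two_factors} lands in its non-logarithmic branch), and the Farkas/linear-programming step is not needed at all — since $m_\ast > \sum_j m_j$ one writes down the pairing exponents $\mu_j$ directly. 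The restricted weak-type bound is therefore obtained \emph{at} the endpoint $q = m_\ast + 2$; the endpoint is excluded from the strong-type statement only because interpolation of a restricted weak-type estimate with the trivial estimates recovers strong type only in the open range, not because of a log loss.

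The genuine gap is in the negative directions, which is where most of the work in the paper actually lies. For the critical-line sharpness your numerology does not close: if $\mathcal{T}\mathbf{1}_E$ has size $\sim\delta^{m_\ast}$ on a set of measure $\sim\delta^{m_\ast+1}$ with $|E|\sim\delta^{m_\ast+1}$, the resulting necessary condition is $m_\ast + (m_\ast+1)/q \geq (m_\ast+1)/p$, not $1 + m_\ast/q \geq (m_\ast+1)/p$; moreover, restricting to the strip $|s|\lesssim\delta$ already caps the output at size $O(\delta)$, contradicting the claimed size $\delta^{m_\ast}$. More fundamentally, the fact that $\det(sA+tB)$ vanishes to order $m_\ast$ does not by itself determine how the linear maps $sA+tB$ compress sets — one needs the distribution of the degeneracy among singular values. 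The paper resolves this by conjugating $(A,B)$ to the pair $(\widetilde{\bm{J}},\widetilde{\bm{I}})$ of \eqref{eq:special_form_A_B} via the Jordan form of $AB^{-1}$ and building parabolic boxes adapted to the generalised eigenspaces of the root of multiplicity $m_\ast$; this yields the sharper condition \eqref{eq:flat_surfaces_necessary_condition_1}, which only collapses to $1+m_\ast/q\geq(m_\ast+1)/p$ on the line $2/q=1/p$. Similarly, in the case $\det(sA+tB)\equiv 0$ the value of $\epsilon_{Q_1,Q_2}$ comes from the specific structural fact (proved in Section \ref{section:semistability_vanishing_determinant} and reused in Section \ref{section:flat_surfaces}) that the images $(sA+tB)V$ for generic $(s,t)$ all coincide with a single subspace $H$ orthogonal to $V$ with $\dim H < \dim V$, giving $\epsilon = (\dim V - \dim H)/(d - \dim H)$, plus a separate (easier) argument when $\ker A\cap\ker B\neq\{0\}$; your sketch gestures at a lower-dimensional image but does not identify this mechanism or produce a concrete $\epsilon$.
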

The statement above does not paint the full picture: our counterexamples rule out a range of exponents beyond those on the line $2/q = 1/p$; however, which exponents we are able to rule out depends on properties of $Q_1, Q_2$ (or rather, of the associated Hessian matrices $A,B$) that go beyond the single value $m_{\ast}$. We direct the reader to Section \ref{section:flat_surfaces} for the more precise picture, and particularly to condition \eqref{eq:flat_surfaces_necessary_condition_1} and Figure \ref{figure:flat_surfaces_range} there.\par 
The ranges given in Theorem \ref{thm:flat_surfaces} are strict subsets of that given in Theorem \ref{main_theorem}, and the aforementioned counterexamples of Section \ref{section:flat_surfaces} show that this is necessarily the case. Moreover, these ranges become smaller as $m_{\ast}$ increases. We do not know whether the given ranges are sharp for all flat surfaces $\Sigma(Q_1,Q_2)$ outside of the line $2/q = 1/p$, but we are able to show that they are for some classes of surfaces. This will also be detailed in Section \ref{section:flat_surfaces}.
\subsection{Structure of the paper} In Section \ref{section:motivation} we provide context for the study of operators \eqref{eq:definition_restricted_2-plane_transform} by describing how they relate to the Fourier Restriction problem for surfaces of codimension $2$ such as $\Sigma(Q_1,Q_2)$; the connection passes through Kakeya-type estimates, and some application of these is also discussed. In Section \ref{section:affine_curvature_GIT} we recall Gressman's construction of affine invariant surface measures from \cite{Gressman2019} in the special case of a surface of codimension $2$, and we describe how the well-curvedness of such surfaces can be interpreted in algebraic terms via Geometric Invariant Theory. In Section \ref{section:semistability} we harness this connection to prove an algebraic characterisation of well-curvedness in terms of the multiplicity of the roots of polynomials $\det(s\nabla^2 Q_1 + t \nabla^2 Q_2)$ -- this is Theorem \ref{thm:well_curvedness_characterisation}. The argument is split in two parts, as the case in which $\det(s\nabla^2 Q_1 + t \nabla^2 Q_2)$ vanishes identically needs to be treated separately. With this preliminary work done, in Section \ref{section:main_proof} we prove Theorems \ref{main_theorem} and \ref{thm:flat_surfaces} with a particularly simple instance of Christ's Method of Refinements from \cite{Christ98}. The latter reduces matters to proving sharp sublevel set estimates for the polynomial $\det(s\nabla^2 Q_1 + t \nabla^2 Q_2)$, which are the subject of Section \ref{section:sublevel_set_estimates}. The proof is somewhat unusual in that it employs a simple linear programming argument; it might be of independent interest. In Section \ref{section:flat_surfaces} we discuss the case of flat surfaces of codimension $2$; we provide counterexamples that rule out various $L^p \to L^q$ estimates that are instead true for well-curved surfaces. Finally, in the appendix we sketch the modification needed to prove Theorem \ref{thm:general_well_curved_surfaces}.
\subsection*{Notation} For $M$ a matrix, we let $M^\top$ denote its transpose and $\|M\|$ denote its operator norm. For $E \subset \mathbb{R}^n$ a set, we let $\mathbf{1}_E$ denote its characteristic function and $|E|$ denote its Lebesgue measure. For non-negative quantities $A,B$, we write $A \lesssim B$ if there exists a constant $C>0$ such that $A \leq C B$. If the value of the constant $C$ depends on a list of parameters $\mathcal{P}$ we write $A \lesssim_{\mathcal{P}}B$ to highlight this fact. If $A \lesssim B$ and $B \lesssim A$, we write $A \sim B$. In conditional statements we will write $A \ll B$ to denote the inequality $A \leq c B$ for some sufficiently small constant $c>0$.
\section{Motivation and applications}\label{section:motivation}
In this section we will provide motivation for the study of the operators $\mathcal{T}$ given by \eqref{eq:definition_restricted_2-plane_transform}. Such motivation arises most prominently from the study of the Fourier Restriction problem and related matters such as the study of Kakeya/Besicovitch-type sets and the Mizohata-Takeuchi conjecture; we will review these in the context of codimension $2$ surfaces, as this will allow us to compare conditions present in the literature with our definition of well-curvedness.
\subsection{Fourier Restriction}
The Fourier Restriction problem for a submanifold $\mathcal{M} \subset \mathbb{R}^n$ (with surface measure $d\sigma$), in its equivalent adjoint formulation known as the Fourier Extension problem, is concerned with the boundedness properties of the Fourier Extension operator given by
\[ E_{\mathcal{M}} g(x) := \int_{\mathcal{M}} g(\xi) e^{2\pi i \xi \cdot x} \,d\sigma(\xi). \]
More specifically, one is interested in determining the full set of exponents $p,q$ for which estimates
\begin{equation}\label{eq:general_extension_estimate}
\|E_{\mathcal{M}} g\|_{L^q(\mathbb{R}^n)} \lesssim_{p,q} \|g\|_{L^p(\mathcal{M}, d\sigma)} 
\end{equation}
hold. The literature on this problem is immense (particularly in the case of codimension $1$) and we do not attempt to review it here; rather, we concentrate on (a selection of) works on the case of submanifolds of codimension $2$, which is most directly relevant to us and has been studied in a number of instances.\par
The first such instance addressing codimension $2$ specifically occurred in M.\@ Christ's PhD thesis \cite{ChristThesis}, in which he studied inequalities \eqref{eq:general_extension_estimate} for $p=2$; such results are commonly known as $L^2$-restriction theorems or as Tomas-Stein theorems. For quadratic surfaces $\Sigma(Q_1,Q_2)$ he proved\footnote{See Section 12 of \cite{ChristThesis}.} that under condition \eqref{condition:well-curvedness} of Section \ref{section:main_results} the extension operator $E_{\Sigma(Q_1,Q_2)}$ satisfies the $L^2 \to L^q$ estimates \eqref{eq:general_extension_estimate} for every $q \geq q_0 :=(2d+8)/d$ (which is sharp), with the exception of the case of $d$ even and $\det(s\nabla^2 Q_1 + t \nabla^2 Q_2)$ having a root of multiplicity exactly $d/2$, in which case $q > q_0$ instead. Moreover, he showed\footnote{See Section 3 of \cite{ChristThesis} and in particular Proposition 3.1 therein.} that \eqref{condition:well-curvedness} is also necessary, in the sense that if $\Sigma(Q_1,Q_2)$ violates the condition then the $L^2 \to L^q$ estimates are false for any $q$ sufficiently close to the endpoint $q_0$. In this work, condition \eqref{condition:well-curvedness} came about as the condition that would guarantee the appropriate decay of $\widehat{\mu}$, where $\mu$ is the measure given by 
\[ \mu(f) := \int_{[-1,1]^d} f(\xi, Q_1(\xi), Q_2(\xi))\,d\xi; \]
such decay is a fundamental ingredient in $L^2$-restriction arguments \`{a} la Tomas-Stein. In retrospect, it should come as no surprise that the endpoint or near-endpoint $L^2 \to L^{q_0}$ Fourier extension estimate -- and hence condition \eqref{condition:well-curvedness} -- is equivalent to the well-curvedness of the surface $\Sigma(Q_1,Q_2)$, as it was shown in \cite{IosevichLu} that this is also the case for hypersurfaces. The interpretation of \eqref{condition:well-curvedness} as a type of curvature condition was noted in \cite{ChristThesis}.\par 
Christ's $L^2$-restriction results were later extended by G.\@ Mockenhaupt \cite{Mockenhaupt} to flat quadratic surfaces and by L.\@ De Carli and A.\@ Iosevich \cite{DeCarliIosevich} to some flat non-quadratic surfaces. D.\@ Oberlin \cite{DOberlin04} proved Fourier restriction estimates beyond the Tomas-Stein range for $d=3$ and for the surface given by $Q_1(\xi) = \xi_1^2 + \xi_2^2$, $Q_2(\xi) = \xi_1^2 + \xi_3^2$. More recently, S.\@ Guo and C.\@ Oh \cite{GuoOh} have addressed the Fourier Restriction problem for general quadratic surfaces of codimension 2 in $\mathbb{R}^5$, proving estimates of type \eqref{eq:general_extension_estimate} that go beyond the Tomas-Stein range and are sharp for some classes of surfaces (all of them flat). Their only assumptions on the pair $(Q_1, Q_2)$ are that the quadratic forms are linearly independent and that $\ker \nabla^2 Q_1 \,\cap \, \ker \nabla^2 Q_2 = \{0\}$ -- in particular, this excludes only a rather degenerate subclass of the set of pairs $(Q_1,Q_2)$ for which $\det(s \nabla^2 Q_1 + t \nabla^2 Q_2)$ vanishes identically (see Section \ref{section:semistability_vanishing_determinant} for more general pairs with vanishing determinant). Interestingly, the range of exponents they obtain is the same for all pairs of quadratic forms considered; it is expected that a larger range could be obtained for well-curved surfaces.\par 
Having provided some context, we will now describe how the operator $\mathcal{T}$ makes its appearance in the Fourier Restriction problem. We will keep the discussion light by not worrying too much about rigour.\par
The most successful approaches to the Fourier Restriction problem to this date are all based on wavepacket decompositions. In the case of codimension 2 specifically (we will use $\Sigma$ for $\Sigma(Q_1,Q_2)$ for shortness), in order to study the extension operator $E_{\Sigma}$ one can equivalently study the modified extension operator
\[ E^{\delta}_{\Sigma}\, g(x) := \int_{\mathcal{N}_{\delta}(\Sigma)} g(\xi) e^{2\pi i \xi \cdot x} \,d\xi, \]
where $\mathcal{N}_{\delta}(\Sigma)$ is the $\delta$-neighbourhood of $\Sigma$ and $g$ is supported on this neighbourhood (thus $E^{\delta}_{\Sigma} g = \widehat{g}$). Estimates \eqref{eq:general_extension_estimate} are then replaced by local-type estimates of the form
\begin{equation}\label{eq:general_local_extension_estimate}
\|E^{\delta}_{\Sigma} \,g\|_{L^q(B(\delta^{-1}))} \lesssim_{p,q,\alpha} \delta^{2/{p'} - \alpha} \|g\|_{L^p(\mathcal{N}_{\delta}(\Sigma))} 
\end{equation}
for every $\delta \leq 1$ and every $\alpha \geq 0$, where $B(\delta^{-1})$ is the ball of radius $\delta^{-1}$ centred at $0$. These estimates are known to imply estimates of type \eqref{eq:general_extension_estimate} (see e.g.\ Section 4 of \cite{GuoOh}). The reason for passing to local-type estimates is that $\mathcal{N}_{\delta}(\Sigma)$ can be neatly partitioned into parabolic boxes adapted to the geometry of $\Sigma$, and such a partition automatically yields a geometrically meaningful way to partition $g$ and $E^{\delta}_{\Sigma} \,g$. The parabolic box that approximates $\mathcal{N}_{\delta}(\Sigma)$ in the vicinity of point $\phi(\xi) = (\xi, Q_1(\xi), Q_2(\xi))$ must have dimensions $\sim \delta^{1/2} \times \ldots \times \delta^{1/2} \times \delta \times \delta$ (this can be seen by a Taylor expansion). It can be described as the set of points given by
\[ \phi(\xi) + \sum_{j=1}^{d} \delta^{1/2} \lambda_j \bm{v}_j(\xi) + \delta \nu_1 \bm{n}_1(\xi) + \delta \nu_2 \bm{n}_2(\xi) \]
for arbitrary $|\lambda_j|, |\nu_1|,|\nu_2|\lesssim 1$ , where\footnote{$\bm{e}_j$ denotes the $j$-th element in the standard basis of $\mathbb{R}^d$.}
\begin{align*}
\bm{v}_j(\xi) &:= (\bm{e}_j, \partial_j Q_1(\xi), \partial_j Q_2(\xi)), \qquad j=1,\ldots, d \\
\bm{n}_1(\xi) &:= (-\nabla Q_1(\xi), 1, 0), \\
\bm{n}_2(\xi) &:= (-\nabla Q_2(\xi), 0, 1);
\end{align*}
here the $\bm{v}_j$ span the directions tangent to $\Sigma$ and $\bm{n}_1,\bm{n}_2$ span the normal ones. Given a collection $\mathcal{F}$ of boundedly-overlapping boxes $\theta$ of the form above covering $\mathcal{N}_\delta(\Sigma)$, one can form an associated partition of unity by smooth functions $\chi_\theta$ and consequently decompose
\[ g = \sum_{\theta \in \mathcal{F}} g_\theta := \sum_{\theta \in \mathcal{F}} g \chi_\theta. \]
By the Uncertainty Principle, $|\widehat{g_\theta}|$ (that is, $|E^{\delta}_{\Sigma} \,g_\theta|$) is approximately constant on any translate of the box dual\footnote{Recall that given a parallelepiped $P$ in $\mathbb{R}^n$ centred at $0$, its dual $P^{\ast}$ is the parallelepiped
\makebox[\linewidth]{$\displaystyle P^{\ast} := \{ \bm{u} \in \mathbb{R}^n : |\bm{u} \cdot \bm{v}| \leq 1 \text{ for all } \bm{v} \in P\}. $}} to the box $\theta$, denoted $\theta^{\ast}$, which has dimensions $\sim \delta^{-1/2} \times \ldots \times \delta^{-1/2} \times \delta^{-1} \times \delta^{-1}$ and long directions spanning the same $2$-plane as $\bm{n}_1, \bm{n}_2$. Thus geometrically $\theta^{\ast}$ is roughly the intersection of a cube of sidelength $\sim\delta^{-1}$ with the $O(\delta^{-1/2})$-neighbourhood of a $2$-plane normal to $\Sigma$ at some point; we call these objects \emph{slabs} (of length $\delta^{-1}$ and thickness $\delta^{-1/2}$). Denote by $\mathcal{S}_{\theta}$ a collection of boundedly-overlapping copies of $\theta^{\ast}$ (i.e. slabs) that covers $\mathbb{R}^{d+2}$; then we can further partition each $\widehat{g_\theta}$ by localising it\footnote{This can only be done approximately, as it is good to keep the frequency localisation intact.} to every $S \in \mathcal{S}_{\theta}$, writing $\widehat{g_\theta} = \sum_{S \in \mathcal{S}_{\theta}} \widehat{g_\theta} \chi_S$. In this way we effectively resolve $E^{\delta}_{\Sigma} g$ into wavepackets that are frequency-supported on some box $\theta$, concentrated on a translate of $\theta^{\ast}$ and approximately constant (in magnitude) there.\par
To obtain Fourier Extension estimates, the strategy typically involves controlling the interactions between different wavepackets by various means; by the observations above, such control can be achieved by studying the overlap of slabs coming from different $\mathcal{S}_\theta$'s. The celebrated Bourgain-Guth argument (also referred to as Broad/Narrow analysis), originating in \cite{BourgainGuth}, employs precisely such a strategy to prove estimates of the form \eqref{eq:general_local_extension_estimate}. It is beyond the scope of this article to present the argument in any amount of detail, but we remark that it can take as input Kakeya-type inequalities, which are functionally of the form
\[ \Big\| \sum_{S \in \mathcal{S}} \mathbf{1}_S\Big\|_{L^r} \lesssim_r \delta ^{-\beta}, \]
where $\mathcal{S}$ is (for example) a collection of slabs containing a single element from each $\mathcal{S}_{\theta}$ and $\beta \geq 0$. Such inequalities can be deduced from estimates  \eqref{eq:generic_mixed_norm_estimate} via duality and discretisation, as the proof of the following corollary will show. In order to avoid technicalities, we work with some simpler slabs which are rescaled to have length $1$ and thickness $\delta$: using $A,B$ for $\nabla^2 Q_1, \nabla^2 Q_2$, for $x \in \mathbb{R}^d$ and $\xi \in [-1,1]^d$ we let $S_{\delta}(x,\xi)$ denote the slab
\[ S_{\delta}(x,\xi):= \{(y,s,t) \in \mathbb{R}^d \times [-1,1]^2 : |y - x + sA\xi + tB\xi| < \delta\} \]
(notice that this is indeed the $O(\delta)$-neighbourhood of the $2$-plane spanned by $\bm{n}_1(\xi), \bm{n}_2(\xi)$, intersected with a cube of sidelength $\sim 1$).
\begin{corollary}[Kakeya-type estimate]\label{corollary:Kakeya_2_slabs}
Let $Q_1,Q_2$ be quadratic forms on $\mathbb{R}^d$ and suppose that the operator $\mathcal{T}$ is $L^p \to L^q$ bounded. If $(x_j, \xi_j)_{j \in J}$ are points in $\mathbb{R}^d \times [-1,1]^d$ such that the $\xi_j$ are $\delta$-separated, we have 
\begin{equation}\label{eq:generic_Kakeya_estimate}
\Big\| \sum_{j \in J} a_j \mathbf{1}_{S_{\delta}(x_j,\xi_j)}\Big\|_{L^{p'}} \lesssim_{Q_1,Q_2,p,q} \delta^{-d + 2d/{q'}} \Big(\sum_{j \in J} |a_j|^{q'}\Big)^{1/{q'}}. 
\end{equation}
In particular, if $\Sigma(Q_1,Q_2)$ is well-curved, we have for every $\epsilon > 0$
\[ \Big\| \sum_{j \in J} \mathbf{1}_{S_{\delta}(x_j,\xi_j)}\Big\|_{L^{(d+4)/d}} \lesssim_{Q_1,Q_2,\epsilon} \delta^{d^2/(d+4) - \epsilon}(\# J)^{(d+2)/(d+4)}. \]
\end{corollary}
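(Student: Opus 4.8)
The plan is to prove the general inequality \eqref{eq:generic_Kakeya_estimate} by the standard duality-and-discretisation argument, and then to obtain the well-curved consequence by feeding into it the strong-type bounds of Theorem \ref{main_theorem} together with the trivial cardinality bound $|J|\lesssim\delta^{-d}$ coming from the $\delta$-separation of the $\xi_j$ inside $[-1,1]^d$.

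For the first part, since $\big|\sum_{j}a_j\mathbf{1}_{S_\delta(x_j,\xi_j)}\big|\le\sum_j|a_j|\,\mathbf{1}_{S_\delta(x_j,\xi_j)}$ pointwise, we may assume $a_j\ge 0$; by duality it then suffices to bound $\sum_j a_j\int_{S_\delta(x_j,\xi_j)}h$ for a non-negative $h$ with $\|h\|_{L^p}\le 1$ (which, after localising $h$ to the union of the slabs, we may take supported in a set of the form $B(0,C)\times[-1,1]^2$, so that the hypothesis applies). A change of variables $y=x-sA\xi-tB\xi$ in \eqref{eq:definition_restricted_2-plane_transform} identifies the adjoint,
\[ \mathcal{T}^\ast g(y,s,t)=\mathbf{1}_{\{|s|,|t|\le 1\}}(s,t)\int_{[-1,1]^d}g(y+sA\xi+tB\xi,\xi)\,d\xi. \]
The key step is to realise each $\mathbf{1}_{S_\delta(x_j,\xi_j)}$, up to the factor $\delta^{-d}$, as a pointwise lower bound for $\mathcal{T}^\ast$ of a bump. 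I would take $g_j:=\mathbf{1}_{B_d(x_j,3\delta)}\otimes\mathbf{1}_{B_d(\xi_j,c\delta)}$, with $c=c(\|A\|,\|B\|)>0$ chosen small enough that $(\|A\|+\|B\|)c<1$: if $(y,s,t)\in S_\delta(x_j,\xi_j)$ and $|\xi-\xi_j|<c\delta$, the triangle inequality gives $|y+sA\xi+tB\xi-x_j|<\delta+(\|A\|+\|B\|)c\delta<3\delta$, so the integrand above equals $1$ on all of $|\xi-\xi_j|<c\delta$, whence $\mathcal{T}^\ast g_j\gtrsim\delta^d\,\mathbf{1}_{S_\delta(x_j,\xi_j)}$.

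Granting this, for $h\ge 0$ one estimates $\sum_j a_j\int_{S_\delta(x_j,\xi_j)}h\lesssim\delta^{-d}\sum_j a_j\langle h,\mathcal{T}^\ast g_j\rangle=\delta^{-d}\big\langle\mathcal{T}h,\sum_j a_j g_j\big\rangle\le\delta^{-d}\|\mathcal{T}h\|_{L^q}\big\|\sum_j a_j g_j\big\|_{L^{q'}}$, and the hypothesis $\|\mathcal{T}h\|_{L^q}\lesssim\|h\|_{L^p}$ controls the first factor. Since the $\xi_j$ are $\delta$-separated and $c<\tfrac12$, the balls $B_d(\xi_j,c\delta)$ are disjoint, so the $g_j$ have disjoint supports and $\big\|\sum_j a_j g_j\big\|_{L^{q'}}^{q'}=\sum_j|a_j|^{q'}\|g_j\|_{L^{q'}}^{q'}\sim\delta^{2d}\sum_j|a_j|^{q'}$ (each ball having measure $\sim\delta^d$); combining the two factors yields \eqref{eq:generic_Kakeya_estimate}. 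For the second assertion I would specialise to $a_j\equiv 1$ and $p=\tfrac{d+4}{4}$, so $p'=\tfrac{d+4}{d}$; then $\sum_j|a_j|^{q'}=|J|\lesssim\delta^{-d}$, and the right-hand side of \eqref{eq:generic_Kakeya_estimate} becomes $\lesssim\delta^{-d+2d/q'}\delta^{-d/q'}=\delta^{-d/q}$. When $\Sigma(Q_1,Q_2)$ is well-curved, Theorem \ref{main_theorem} supplies the $L^{p}\to L^{q}$ bound for every $q<\tfrac{d+4}{2}$ (these being exactly the $q$ satisfying $2+\tfrac{d}{q}>\tfrac{4(d+2)}{d+4}=\tfrac{d+2}{p}$ and $\tfrac{2}{q}\ge\tfrac{4}{d+4}=\tfrac1p$); choosing such a $q$ with $\tfrac{d}{q}<\tfrac{2d}{d+4}+\epsilon$ then gives $\delta^{-d/q}\le\delta^{-2d/(d+4)-\epsilon}$ for all $\delta\in(0,1]$, as claimed.

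The argument is entirely routine; the only genuinely geometric point is the construction of the bumps $g_j$ and the verification that $\mathcal{T}^\ast g_j\gtrsim\delta^d\,\mathbf{1}_{S_\delta(x_j,\xi_j)}$, which is where the precise definition of the slabs $S_\delta(x,\xi)$ and the choice of $c$ depending on $\|A\|,\|B\|$ enter. The remainder is bookkeeping — in particular the exact power of $\delta$ in \eqref{eq:generic_Kakeya_estimate} is forced by the measures $|B_d(\cdot,\delta)|\sim\delta^d$ of the balls appearing — and the optimisation in the last step only requires checking that the relevant exponent pair lies in the open range of Theorem \ref{main_theorem}.
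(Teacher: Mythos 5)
Your proposal is correct and follows essentially the paper's own proof: duality, the pointwise bound $\mathcal{T}^{\ast}g_j \gtrsim \delta^{d}\,\mathbf{1}_{S_{\delta}(x_j,\xi_j)}$ for bump functions on $\delta$-balls, and disjointness of the $\xi$-balls coming from the $\delta$-separation, with the same exponent bookkeeping. The only (cosmetic) divergence is in the final step: you fix $p=\tfrac{d+4}{4}$ and take $q$ slightly below $\tfrac{d+4}{2}$, exploiting that Theorem \ref{main_theorem} allows $\tfrac{2}{q}\geq\tfrac{1}{p}$ with strict inequality, whereas the paper stays on the line $\tfrac{2}{q}=\tfrac{1}{p}$ near the endpoint and then interpolates with the trivial $L^{\infty}$ bound $\lesssim\delta^{-d}$ to reach the $L^{(d+4)/d}$ norm -- both yield the same $\delta^{-\epsilon}$ loss.
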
 
In the next subsection we will provide an application of Corollary \ref{corollary:Kakeya_2_slabs} to a problem in Geometric Measure Theory.
\begin{remark}
It is well-known that by a standard randomisation argument it is possible to deduce estimates such as those encountered in Corollary \ref{corollary:Kakeya_2_slabs} from Fourier restriction estimates such as \eqref{eq:general_extension_estimate} (see for instance Section 22.3 of \cite{Mattila}). However, away from the restriction endpoint these estimates are not necessarily as efficient as those deduced from $L^p \to L^q$ bounds for the operator $\mathcal{T}$. To wit, using Christ's Fourier restriction estimate one can deduce the inequality
\[ \Big\| \sum_{j \in J} \mathbf{1}_{S_{\delta}(x_j,\xi_j)}\Big\|_{L^{(d+4)/d}} \lesssim_{Q_1,Q_2,\epsilon} \delta^{d^2/(d+4) - \epsilon}(\# J), \]
which is weaker than the one obtained in Corollary \ref{corollary:Kakeya_2_slabs}.
\end{remark}
\begin{proof}[Proof of Corollary \ref{corollary:Kakeya_2_slabs}]
From hypothesis we have by duality $\|\mathcal{T}^{\ast} g\|_{L^{p'}} \lesssim \|g\|_{L^{q'}}$, where the adjoint $\mathcal{T}^{\ast}$ is given by 
\[ \mathcal{T}^{\ast} g(y,s,t) = \int_{[-1,1]^d} g(y + sA\xi + tB\xi, \xi)\,d\xi. \]
The statement is a consequence of following simple fact: with $K := \|A\|+\|B\|$, we have
\[ \mathcal{T}^{\ast} (\mathbf{1}_{B(x,2\delta)} \mathbf{1}_{B(\xi,K^{-1}\delta)}) \gtrsim_{A,B} \delta^{d} \,\mathbf{1}_{S_{\delta}(x,\xi)}. \]
Taking $g(x,\xi) = \sum_{j \in J} a_j \mathbf{1}_{B(x_j,2\delta)}(x) \mathbf{1}_{B(\xi_j,K^{-1}\delta)}(\xi)$ and using the $\delta$-separation of the $\xi_j$, estimate \eqref{eq:generic_Kakeya_estimate} follows readily from the dual estimate above.\par
For the well-curved case, apply \eqref{eq:generic_Kakeya_estimate} with $a_j = 1$ and $(p,q)$ along the $2/q = 1/p$ line and arbitrarily close to endpoint $(p,q) = \big(\tfrac{d+4}{4}, \tfrac{d+4}{2}\big)$ (these are the estimates afforded by Theorem \ref{main_theorem}). Finally, interpolate with the trivial $ \big\| \sum_{j \in J} \mathbf{1}_{S_{\delta}(x_j,\xi_j)}\big\|_{L^{\infty}} \lesssim \# J$ estimate to upgrade the norm to an $L^{(d+4)/d}$ one (this costs us a $\delta^{-\epsilon}$ loss, since $\#J \lesssim \delta^{-d}$).
\end{proof}
The above discussion thus motivates the study of restricted $2$-plane transforms \eqref{eq:definition_restricted_2-plane_transform} in the context of the Fourier Restriction problem. We plan to pursue this connection further in the near future.
\subsection{$(n,k)$-Kakeya sets}
Kakeya sets are subsets of $\mathbb{R}^n$ that contain a unit segment in every possible direction; a Kakeya set of measure zero is usually called a Besicovitch set (such sets exist). The Kakeya conjecture in Geometric Measure Theory states that Besicovitch sets in $\mathbb{R}^n$ have necessarily Hausdorff dimension equal to $n$. More in general, $(n,k)$-Kakeya sets are subsets $E \subset \mathbb{R}^n$ such that for any $k$-dimensional subspace $V$ (or ``$k$-plane'') there exists an affine translate $V + p$ such that $B(p,1) \cap (V + p) \subset E$ (where $B(p,1)$ denotes a ball in $\mathbb{R}^n$ of radius $1$ centred at $p$); Kakeya sets then coincide with $(n,1)$-Kakeya sets. Analogously, a $(n,k)$-Besicovitch set is a $(n,k)$-Kakeya set of measure zero. Even the existence of $(n,k)$-Besicovitch sets for $k>1$ is an open problem, but it is generally believed that no such sets exist, as the numerology of the dimensions involved is not favourable -- and for some $(n,k)$ pairs this has indeed been proven. We direct the reader to Chapter 24 of \cite{Mattila} for details and an overview of the problem.\par 
In order to obtain a more favourable situation, one might restrict the directions of the $k$-planes to lie in a submanifold $\mathcal{G}$ of the Grassmannian\footnote{The manifold of all linear subspaces of $\mathbb{R}^n$ of dimension $k$.} $G(n,k)$ and define a $\mathcal{G}$-Kakeya set to be a set $E \subset \mathbb{R}^n$ such that for every $V \in \mathcal{G}$ there exists an affine translate $V + p$ such that $B(p,1) \cap (V + p) \subset E$. Some works exist in this direction -- see \cite{DOberlin06}, \cite{DOberlin14} and \cite{FraserHarrisKroon} for some general types of submanifolds. Heuristically however, the most favourable situation appears to be that in which $\mathcal{G}$ satisfies $\dim \mathcal{G}+k = n$. This was the approach taken by K.\@ Rogers in \cite{Rogers}, in which he considered $\mathcal{G}$-Kakeya sets for $\mathcal{G}$ a $d$-dimensional submanifold of $G(d+2,2)$, a case that is directly relevant to us. Indeed, the set
\[ N(Q_1,Q_2) := \{ \pi_{\xi} : \xi \in [-1,1]^d \}, \]
where 
\[ \pi_{\xi} := \operatorname{Span}\{(-\nabla Q_1(\xi),1,0), (-\nabla Q_2(\xi),0,1)\}, \]
is the set of $2$-planes that are normal to $\Sigma(Q_1,Q_2)$ at some point; under the very mild assumption $\ker \nabla^2 Q_1 \,\cap \,\ker \nabla^2 Q_2 = \{0\}$, this set is precisely a $d$-dimensional submanifold of $G(d+2,2)$. Rogers proved that when the submanifold $\mathcal{G}$ satisfies a certain curvature condition (akin to the Wolff axioms\footnote{See e.g. Definition 13.1 in \cite{KatzLabaTao}.}) and $d=1$ then a $\mathcal{G}$-Kakeya set has Hausdorff dimension $3$ (thus equal to the ambient dimension $d+2$), and when $d=2$ it has Hausdorff dimension at least $7/2$. Using Corollary \ref{corollary:Kakeya_2_slabs}, we can prove a similar statement for arbitrary $d\geq 2$ and Kakeya sets with respect to directions normal to surfaces $\Sigma(Q_1,Q_2)$.
\begin{proposition}[$N(Q_1,Q_2)$-Kakeya sets]\label{prop:N-Kakeya_sets}
Let $d\geq 2$ and let $Q_1,Q_2$ be quadratic forms on $\mathbb{R}^d$ with the property that the polynomial $\det(s\nabla^2 Q_1 + t \nabla^2 Q_2)$ does not vanish identically. If $E$ is a $N(Q_1,Q_2)$-Kakeya set in $\mathbb{R}^{d+2}$, then
\[ \dim_{H} E \geq \frac{d+4}{2}. \]
\end{proposition}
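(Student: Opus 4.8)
The plan is to discretise the Kakeya set at scale $\delta$, feed the resulting configuration of slabs into the Kakeya-type estimate of Corollary~\ref{corollary:Kakeya_2_slabs}, obtain a lower bound on $|E_\delta|$, and then upgrade this (uniform-in-$\delta$) bound to a lower bound on the Hausdorff dimension of $E$ by a standard multi-scale pigeonholing argument.

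First I would fix a small dyadic $\delta>0$ and a maximal $\delta$-separated set $\{\xi_j\}_{j\in J}\subset[-1,1]^d$, so that $\#J\sim\delta^{-d}$. Since $E$ is $N(Q_1,Q_2)$-Kakeya, for each $j$ some translate of the unit disk in the $2$-plane $\pi_{\xi_j}=\operatorname{Span}\{\bm n_1(\xi_j),\bm n_2(\xi_j)\}$ is contained in $E$; passing to the $C\delta$-neighbourhood $E_{C\delta}$ (with $C$ depending only on $Q_1,Q_2$) and shrinking the slab by a factor depending only on $\|A\|+\|B\|$, one gets $S_\delta(x_j,\xi_j)\subseteq E_{C\delta}$ for a suitable $x_j\in\mathbb{R}^d$. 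Because $|S_\delta(x_j,\xi_j)|\sim_{Q_1,Q_2}\delta^{d}$ (the slab has $d$ thin directions of width $\sim\delta$ and two long directions of length $\sim 1$), we have $\sum_{j\in J}|S_\delta(x_j,\xi_j)|\sim 1$.

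Next I would invoke boundedness of $\mathcal{T}$ along the critical line $q=2p$. If $\Sigma(Q_1,Q_2)$ is well-curved, Theorem~\ref{main_theorem} gives $\mathcal{T}\colon L^p\to L^{2p}$ for every $p>(d+4)/4$; if instead $\Sigma(Q_1,Q_2)$ is flat, the hypothesis $\det(s\nabla^2Q_1+t\nabla^2Q_2)\not\equiv 0$ guarantees that its largest root multiplicity $m_{\ast}$ is finite, and Theorem~\ref{thm:flat_surfaces} gives $\mathcal{T}\colon L^p\to L^{2p}$ for every $p>\max\{(d+4)/4,(m_{\ast}+2)/2\}$. (This is precisely where the hypothesis on the determinant is used: when it vanishes identically no such $p$ exists.) Fix such a pair $(p,q)=(p,2p)$ and apply \eqref{eq:generic_Kakeya_estimate} with $a_j\equiv 1$ and the $\delta$-separated centres $\xi_j$; since $\#J\sim\delta^{-d}$ this gives
\[ \Big\|\sum_{j\in J}\mathbf 1_{S_\delta(x_j,\xi_j)}\Big\|_{L^{p'}}\lesssim\delta^{-d+2d/q'}(\#J)^{1/q'}\sim\delta^{-d+d/q'}. \]
Combining with the mass bound above and Hölder's inequality,
\[ 1\lesssim\sum_{j\in J}|S_\delta(x_j,\xi_j)|=\int_{E_{C\delta}}\sum_{j\in J}\mathbf 1_{S_\delta(x_j,\xi_j)}\leq|E_{C\delta}|^{1/p}\Big\|\sum_{j\in J}\mathbf 1_{S_\delta(x_j,\xi_j)}\Big\|_{L^{p'}}\lesssim|E_{C\delta}|^{1/p}\,\delta^{-d+d/q'}, \]
whence $|E_{C\delta}|\gtrsim_{Q_1,Q_2}\delta^{p(d-d/q')}=\delta^{pd/q}=\delta^{d/2}$ (using $1-1/q'=1/q$ and $q=2p$). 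The exponent $d/2$ is independent of the chosen $p$, so avoiding the endpoint costs nothing here.

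The bound $|E_{C\delta}|\gtrsim\delta^{d/2}$, valid for all $0<\delta\le 1$, already yields lower Minkowski dimension $(d+2)-d/2=(d+4)/2$; the remaining step — which I expect to be the technical crux — is to upgrade it to the same Hausdorff bound without loss. The key structural fact is that the slab configuration above is available at \emph{every} scale and that \eqref{eq:generic_Kakeya_estimate} is scale-invariant, so a standard argument in the spirit of Bourgain's treatment of Besicovitch sets (cf.\ Chapter~24 of \cite{Mattila}), or equivalently the passage from a slab maximal inequality to a dimension bound, applies: given $s<(d+4)/2$ and a cover $\{B(c_i,r_i)\}$ of $E$ with $\sum_i r_i^{s}$ arbitrarily small, truncate to the $O(\log)$ relevant dyadic scales and group the balls accordingly; for each $j$ the unit disk $D_j\subseteq E$ is covered, so a pigeonhole over the scales produces a scale $2^{-k}$ and a positive-measure family of $j$ for which the balls of that scale cover a fraction $\gtrsim k^{-2}$ of $D_j$; applying the computation of the previous paragraph to this partial scale-$2^{-k}$ configuration forces its union to have measure $\gtrsim 2^{-kd/2}k^{-O(1)}$, while directly that union has measure $\le\sum_{r_i\sim 2^{-k}}r_i^{d+2}\le 2^{-k(d+2-s)}\sum_i r_i^{s}$, which is far smaller because $d+2-s>d/2$; letting $\sum_i r_i^{s}\to0$ gives a contradiction, so $\mathcal{H}^{s}(E)>0$ for every $s<(d+4)/2$ and thus $\dim_H E\ge(d+4)/2$. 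The delicate points in this last step — the bookkeeping of the pigeonhole over scales (especially the truncation of the cover) and the verification that \eqref{eq:generic_Kakeya_estimate} is robust enough to be applied to "partial-disk" sub-configurations — are exactly the ones that are routine in the Kakeya literature, and I would cite the relevant references rather than reproduce them.
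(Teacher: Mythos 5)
Your proposal is correct and follows essentially the same route as the paper: discretise with a maximal $\delta$-separated family, combine the mass bound $\sum_j |S_\delta(x_j,\xi_j)|\sim 1$ with H\"older and estimate \eqref{eq:generic_Kakeya_estimate} for a critical-line exponent supplied by Theorems \ref{main_theorem} or \ref{thm:flat_surfaces}, and deduce $|E_\delta|\gtrsim\delta^{d/2}$, hence Minkowski dimension $\geq (d+4)/2$. The paper likewise treats only the Minkowski bound in detail and defers the upgrade to Hausdorff dimension to the standard multi-scale pigeonholing argument (citing Section 4 of \cite{Rogers}), exactly as you do.
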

\begin{proof}
We will present the argument for Minkowski dimension for simplicity of exposition -- the extension of the proof to Hausdorff dimension follows a standard argument that can be found in Section 4 of \cite{Rogers}.\par
Let $(\xi_j)_{j \in J}$ be a maximal collection of $\delta$-separated points in $[-1,1]^d$ and let $(x_j)_{j \in J}$ be arbitrary points in $\mathbb{R}^d$. It will suffice to show that to cover 
\[ E_{\delta} := \bigcup_{j \in J} S_{\delta}(x_j, \xi_j) \]
one needs at least $\gtrsim \delta^{-(d+4)/2}$ balls of radius $\delta$. Observe that 
\[ \sum_{j \in J} |S_{\delta}(x_j, \xi_j)| \sim \delta^d \# J \sim 1, \]
and therefore by H\"{o}lder inequality
\[ |E_\delta|^{1/p} \Big\| \sum_{j \in J} \mathbf{1}_{S_{\delta}(x_j,\xi_j)}\Big\|_{L^{p'}} \gtrsim 1. \]
Since $\det(s\nabla^2 Q_1 + t \nabla^2 Q_2)$ does not vanish, Theorems \ref{main_theorem} and \ref{thm:flat_surfaces} show that $\mathcal{T}$ is $L^p \to L^q$ bounded for some non-trivial $(p,q)$ on the line $2/q = 1/p$. Applying Corollary \ref{corollary:Kakeya_2_slabs} with any such estimate (and taking $a_j=1$ in \eqref{eq:generic_Kakeya_estimate}) we obtain after some rearrangement
\[ |E_{\delta}| \gtrsim \delta^{d/2}, \]
which implies the claim (since $|B_{d+2}(\delta)|\sim \delta^{d+2}$).
\end{proof}
It is natural to want to compare the curvature assumptions, and in particular to wonder whether all $N(Q_1,Q_2)$ submanifolds are curved in the sense of \cite{Rogers}. We claim that they are, under the hypotheses of Proposition \ref{prop:N-Kakeya_sets}. The curvature condition would be somewhat cumbersome to state in here, so we omit it; however, in our case it boils down to the condition that for every $V \in G(d+2,2)$ with $\dim V > 2$ one has
\[ \dim \{ \pi \in \mathcal{G} : \pi \subset V \} \leq \dim V - 2. \]
We will verify that this is the case when $\mathcal{G} = N(Q_1,Q_2)$. Let $\dim V = d + 2 - \ell$ and write $V = \{ \bm{x} \in \mathbb{R}^{d+2} : \bm{v}_1 \cdot \bm{x} = \ldots = \bm{v}_{\ell} \cdot \bm{x} = 0\}$ for some linearly independent $\bm{v}_1, \ldots, \bm{v}_{\ell}$ (the case $\ell = 0$ is trivial, so we can assume $\ell \geq 1$). Write $\bm{v}_j = (u_j, a_j, b_j) \in \mathbb{R}^d \times \mathbb{R} \times \mathbb{R}$ and observe that $\pi_{\xi} \subset V$ if and only if 
\[ A u_j \cdot \xi = a_j, \qquad B u_j \cdot \xi = b_j, \qquad \text{for all } j \in \{1, \ldots ,\ell\} \]
so that the dimension of $\{ \pi \in N(Q_1,Q_2) : \pi \subset V\}$ is the same as the dimension of the space of solutions to these equations. If the $u_1, \ldots, u_\ell$ are not linearly independent then the equations do not have a solution (as this would make the $\bm{v}_j$ linearly dependent as well); hence we can assume that they are linearly independent. Letting $U := \begin{pmatrix} u_1 & \cdots & u_\ell \end{pmatrix}$ we see that the dimension is bounded by $\dim \ker \begin{pmatrix} AU \\ BU \end{pmatrix}$. To show that this is $\leq \dim V - 2 = d - \ell$ it is equivalent to show that $\operatorname{rk}\begin{pmatrix} AU \\ BU \end{pmatrix} \geq \ell$; but by assumption there exists $(s,t)$ such that $\det(sA + tB) \neq 0$, and since $\operatorname{rk} U = \ell$ we see that $\operatorname{rk}(sA+tB)U = \ell$ and thus the rank condition is satisfied. This finishes the proof of the claim. 
\subsection{Mizohata-Takeuchi conjecture}
In this last motivational subsection we show how operators of the form \eqref{eq:definition_restricted_2-plane_transform} appear naturally in the context of the Mizohata-Takeuchi conjecture for surfaces of codimension 2.\par
The Mizohata-Takeuchi conjecture is a variant of the Fourier Restriction problem that concerns weighted $L^2$ estimates for the Fourier Extension operator (it originated in the study of dispersive and hyperbolic PDEs). For a hypersurface $\Sigma \subset \mathbb{R}^n$ with surface measure $d\sigma$ the conjecture takes the form
\[ \int_{\mathbb{R}^n} |E_{\Sigma}\,g(x)|^2 w(x) \,dx \lesssim \|Xw\|_{L^\infty} \int_{\Sigma} |g|^2 \,d\sigma, \]
where $X = T_{n,1}$ is the X-ray transform and $w$ is a non-negative function. The conjecture has been verified in the special case of $\Sigma = \mathbb{S}^{n-1}$ and weight $w$ radial, and this was done independently in \cite{CarberySoria} and \cite{BarceloRuizVega}; it can also be proven by the methods of \cite{CarberyRomeraSoria} but this was not realised at the time.\footnote{This was communicated to us by A.\@ Carbery.} The single-scale version of the result was treated in \cite{BarceloBennettCarbery}. The case of weights concentrated on a circle in the plane -- the opposite case to radial weights in some sense -- was treated in \cite{BennettCarberySoriaVargas}. The conjecture is otherwise open in all dimensions $n$, including in $n=2$, and the topic has been attracting increasing attention lately: see \cite{BennettNakamura} and \cite{BennettNakamuraShiraki} for some variants involving tomographic bounds (that is, bounds on objects such as $X(|E_{\Sigma}\,g|^2)$, where $X$ can later be transferred to the weight $w$ via the X-ray inversion formula); \cite{BezSugimoto} for connections with smoothing estimates; \cite{Shayya} for some results in $n=2$; \cite{CarberyIliopoulouWang} for a result for general $n$ but with a loss in the scale.\par
For surfaces of codimension other than $1$ one can generalise the conjecture as follows. For a submanifold $\mathcal{M} \subset \mathbb{R}^n$ of codimension $k$, denote by $N(\mathcal{M})$ the set of $k$-planes $\pi$ such that, for some point $p \in \mathcal{M}$, $\pi$ is orthogonal to $T_{p}\mathcal{M}$ (thus $N(\mathcal{M})$ is the set of normal directions of $\mathcal{M}$); then one conjectures that for every non-negative weight $w$
\[ \int_{\mathbb{R}^n} |E_{\mathcal{M}}\, g(x)|^2 w(x) \,dx \lesssim \sup_{\substack{\pi \in N(\mathcal{M}), \\ x \in \mathbb{R}^n}} |T_{n,k}w(\pi + x)| \int_{\mathcal{M}} |g|^2 \,d\sigma. \]
The first factor on the right-hand side is effectively the $L^\infty$ norm of the restriction of the $k$-plane transform $T_{n,k}$ to the set of normal directions to $\mathcal{M}$ -- which is precisely the same type of operator as \eqref{eq:definition_restricted_2-plane_transform}. We offer some modest evidence for this generalisation of the Mizohata-Takeuchi conjecture in all codimensions by proving the weak version stated in the  proposition below (which has a worse norm on the weight). We prelude some definitions: for $Q_1,\ldots, Q_k$ quadratic forms on $\mathbb{R}^d$ we let $\bm{Q}(\xi):= (Q_1(\xi), \ldots, Q_k(\xi))$; we denote by $\Sigma(\bm{Q})$ the compact quadratic surface of codimension $k$ in $\mathbb{R}^{d+k}$ parametrised by 
\[ \phi_{\bm{Q}}(\xi) := (\xi, \bm{Q}(\xi)), \qquad \xi \in [-1,1]^d. \]
We let 
\[ \mathcal{T}_{\bm{Q}}f(x,\xi) := \int_{\mathbb{R}^k} f(x - \nabla (\bm{s} \cdot \bm{Q})(\xi), \bm{s}) \,d\bm{s}, \]
where $\nabla = \nabla_{\xi}$ is applied componentwise, that is $\nabla (\bm{s} \cdot \bm{Q}) = \sum_{j=1}^{k} s_j \nabla Q_j$; notice that when $k = 2$ this is precisely the non-local version of operator \eqref{eq:definition_restricted_2-plane_transform}. This operator is pointwise comparable to the restriction of $T_{n,k}$ to directions normal to $\Sigma(\bm{Q})$. Finally, for simplicity we will work with the slightly modified Fourier Extension operator
\[ \mathsf{E}_{\Sigma(\bm{Q})}\, g(\bm{x}) := \int_{[-1,1]^d} g(\xi) e^{2\pi i \bm{x} \cdot \phi_{\bm{Q}}(\xi)} \,d\xi. \]
\begin{proposition}
Let $k \geq 1$ and let $\bm{Q} = (Q_1, \ldots, Q_k)$ be a vector of $k$ quadratic forms on $\mathbb{R}^d$. For every integrable weight $w : \mathbb{R}^{d+k} \to [0,\infty)$ we have\footnote{The mixed-norm is as in \eqref{eq:mixed_norm_definition}, that is, $L^{\infty}(L^2) = L^{\infty}_{\xi}(L^2_x)$.}
\begin{equation}\label{eq:weak_Mizohata_Takeuchi}
\int_{\mathbb{R}^{d+k}} |\mathsf{E}_{\Sigma(\bm{Q})}\, g(\bm{x})|^2 w(\bm{x}) \,d\bm{x} \lesssim \|\mathcal{T}_{\bm{Q}} w\|_{L^\infty (L^2)} \int_{[-1,1]^d} |g(\xi)|^2 \,d\xi 
\end{equation}
for every function $g \in L^2$.
\end{proposition}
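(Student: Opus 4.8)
The plan is to recast the left-hand side of \eqref{eq:weak_Mizohata_Takeuchi} as a positive bilinear form in $g$ whose kernel turns out to be a slice (in the second variable) of $\mathcal{T}_{\bm{Q}}w$, and then to close the estimate with Cauchy--Schwarz. Throughout, write $\phi = \phi_{\bm{Q}}$ and fix the Fourier convention $\mathcal{F}h(\tau) := \int h(y)\,e^{2\pi i y\cdot\tau}\,dy$, so that $\|\mathcal{F}h\|_{L^2} = \|h\|_{L^2}$ (Plancherel) and $\int (\mathcal{F}f)\,g = \int f\,(\mathcal{F}g)$ (multiplication formula). Expanding $|\mathsf{E}_{\Sigma(\bm{Q})}g(\bm{x})|^2$ and integrating in $\bm{x}$ first (justified since $w \in L^1$ and $g \in L^1([-1,1]^d)$) gives
\[ \int_{\mathbb{R}^{d+k}} |\mathsf{E}_{\Sigma(\bm{Q})}g(\bm{x})|^{2}\,w(\bm{x})\,d\bm{x} = \iint g(\xi)\,\overline{g(\eta)}\,(\mathcal{F}w)\big(\phi(\xi) - \phi(\eta)\big)\,d\xi\,d\eta. \]

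The first step is the key algebraic identity. Passing to the volume-preserving coordinates $\tau = \xi - \eta$, $m = \tfrac{\xi+\eta}{2}$ and using that for quadratic forms $Q_j(\xi) - Q_j(\eta) = \tau^\top(\nabla^2 Q_j)\,m$ \emph{exactly}, one finds $\phi(\xi) - \phi(\eta) = (\tau,\, \bm{Q}'(\tau,m))$ where $\bm{s}\cdot\bm{Q}'(\tau,m) = \tau\cdot\nabla(\bm{s}\cdot\bm{Q})(m)$ for every $\bm{s}\in\mathbb{R}^k$. Substituting this into the $(x,\bm{s})$-integral defining $\mathcal{F}w$ and performing, for each fixed $\bm{s}$, the measure-preserving translation $x \mapsto x - \nabla(\bm{s}\cdot\bm{Q})(m)$ identifies it as $(\mathcal{F}w)\big(\phi(\xi)-\phi(\eta)\big) = \mathcal{F}\!\big(\mathcal{T}_{\bm{Q}}w(\cdot,m)\big)(\tau)$. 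Hence, writing $G_m(\tau) := g\!\left(m + \tfrac{\tau}{2}\right)\overline{g\!\left(m - \tfrac{\tau}{2}\right)}$ and applying the multiplication formula in the $\tau$-variable for each fixed $m$,
\[ \int |\mathsf{E}_{\Sigma(\bm{Q})}g|^{2}\,w = \iint G_m(\tau)\,\mathcal{F}\!\big(\mathcal{T}_{\bm{Q}}w(\cdot,m)\big)(\tau)\,d\tau\,dm = \iint \mathcal{T}_{\bm{Q}}w(x,m)\,(\mathcal{F}G_m)(x)\,dx\,dm. \]

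At this point the estimate follows from Cauchy--Schwarz on the product space in $(x,m)$. Since $g$ is supported in $[-1,1]^d$, the function $G_m$ (and hence $\mathcal{F}G_m$) vanishes identically unless $m \in [-1,1]^d$, so the $m$-integration is effectively over the cube; consequently $\iint_{[-1,1]^d\times\mathbb{R}^d}|\mathcal{T}_{\bm{Q}}w(x,m)|^2\,dx\,dm \le 2^d\,\|\mathcal{T}_{\bm{Q}}w\|_{L^\infty(L^2)}^2$. For the other factor, Plancherel in $\tau$ followed by undoing the change of variables gives $\iint |\mathcal{F}G_m(x)|^2\,dx\,dm = \int \|G_m\|_{L^2}^2\,dm = \iint \big|g(m+\tfrac{\tau}{2})\big|^2\big|g(m-\tfrac{\tau}{2})\big|^2\,d\tau\,dm = \|g\|_{L^2}^4$. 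Multiplying the two bounds yields \eqref{eq:weak_Mizohata_Takeuchi} with an absolute dimensional constant, in particular one independent of $\bm{Q}$.

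The main obstacle is precisely the identification in the first step: that $\mathcal{F}w$ restricted to the difference set $\Sigma(\bm{Q}) - \Sigma(\bm{Q})$ is \emph{literally} the Fourier transform of $\mathcal{T}_{\bm{Q}}w(\cdot,m)$. Both defining features of the setup enter here — the exactly quadratic profile, so that the second difference $Q_j(\xi)-Q_j(\eta)$ is genuinely bilinear in $(\tau,m)$ and pairs with the gradient data $\nabla(\bm{s}\cdot\bm{Q})$ appearing in $\mathcal{T}_{\bm{Q}}$ (for non-quadratic surfaces this would hold only up to error terms), and the compactness of the parameter cube, which confines the midpoint $m$ to a bounded set and is exactly what allows one to pass from the $L^2$-in-$m$ quantity $\int \|\mathcal{T}_{\bm{Q}}w(\cdot,m)\|_{L^2}^2\,dm$ to the mixed norm $\|\mathcal{T}_{\bm{Q}}w\|_{L^\infty(L^2)}$. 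The remaining points are routine: non-negativity and integrability of $w$ make $\mathcal{T}_{\bm{Q}}w$ a well-defined non-negative function (and if $\|\mathcal{T}_{\bm{Q}}w\|_{L^\infty(L^2)} = +\infty$ there is nothing to prove), and all Fubini and Plancherel manipulations above are justified first for $w$ bounded with compact support and then extended to arbitrary $w \ge 0$ integrable by monotone convergence.
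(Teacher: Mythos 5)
Your proof is correct and follows essentially the same route as the paper: expand the square, use the exact polarisation identity for quadratic forms to pass to midpoint/difference coordinates, identify $\widehat{w}(\phi(\xi)-\phi(\eta))$ with the Fourier transform in $x$ of the slice $\mathcal{T}_{\bm{Q}}w(\cdot,m)$ (the paper's Radon duality formula), and close with Cauchy--Schwarz and Plancherel, using the compact support of $g$ to confine the midpoint variable to the cube and reach the $L^\infty(L^2)$ norm. The only difference is presentational: you work directly with $G_m(\tau)=g(m+\tfrac{\tau}{2})\overline{g(m-\tfrac{\tau}{2})}$ and one Cauchy--Schwarz on the product space, whereas the paper applies Fourier inversion to $g$ and phrases the same quantity via the cross-Wigner (ambiguity) operator $W\big(\widehat{g},\overline{\widehat{g}}\big)$ with two successive Cauchy--Schwarz steps -- your $\mathcal{F}G_m$ is exactly that ambiguity function, so the arguments coincide in substance.
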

\begin{proof}
We note the following Radon duality formula, which will be useful later:
\begingroup
\allowdisplaybreaks
\begin{equation}
\begin{aligned}
\mathcal{T}_{\bm{Q}}f(x,\xi) &= \int_{\mathbb{R}^k} \int_{\mathbb{R}^{d+k}} \widehat{f}(\eta, \bm{\alpha}) e^{2\pi i [\eta \cdot (x - \nabla(\bm{s} \cdot\bm{Q})(\xi)) + \bm{\alpha}\cdot \bm{s}]} \,d\eta\,d\bm{\alpha} \,d\bm{s} \\
&= \int_{\mathbb{R}^{d+k}} \widehat{f}(\eta, \bm{\alpha}) e^{2\pi i \eta \cdot x} \int_{\mathbb{R}^k} e^{2\pi i \bm{s} \cdot (\bm{\alpha} - \eta \cdot \nabla \bm{Q}(\xi))} \,d\bm{s} \,d\eta \,d\bm{\alpha} \\
&= \int_{\mathbb{R}^{d+k}} e^{2\pi i \eta \cdot x}\, \widehat{f}(\eta, \eta \cdot \nabla \bm{Q}(\xi)) \,d\eta,
\end{aligned} \label{eq:Radon_duality}
\end{equation}
\endgroup
where $\eta \cdot \nabla \bm{Q}(\xi) = (\eta \cdot \nabla Q_1(\xi), \ldots ,\eta \cdot \nabla Q_k(\xi))$.\par
Expanding the square in the left-hand side of \eqref{eq:weak_Mizohata_Takeuchi}, we have by Fubini
\begin{align*}
\int_{\mathbb{R}^{d+k}} |\mathsf{E}_{\Sigma(\bm{Q})}\, g(\bm{x})|^2 w(\bm{x}) \,d\bm{x} &= \iiint g(\eta) \overline{g(\xi)} e^{2\pi i \bm{x} \cdot (\phi_{\bm{Q}}(\eta) - \phi_{\bm{Q}}(\xi))} w(\bm{x}) \,d\eta \,d\xi \,d\bm{x} \\
&= \iint g(\eta) \overline{g(\xi)} \widehat{w}(\phi_{\bm{Q}}(\xi) - \phi_{\bm{Q}}(\eta)) \,d\eta \,d\xi.
\end{align*}
Now using the polarisation identity
\[ Q(\xi) - Q(\eta) = \frac{1}{2} (\xi - \eta) \cdot \nabla Q(\xi + \eta)  \]
we see by a change of variables that the last integral is equal to 
\[ \iint g\Big(\xi - \frac{\eta}{2}\Big) \overline{g\Big(\xi + \frac{\eta}{2}\Big)} \widehat{w}(\eta, \eta \cdot \nabla \bm{Q}(\xi)) \,d\xi \,d\eta. \] 
As $g$ is supported in $[-1,1]^d$ we see that we can insert in this expression a localisation factor $\mathbf{1}_{[-1,1]^d}(\xi)$ for free. By the Fourier inversion formula applied to $g, \overline{g}$ (which can be assumed to be Schwartz by a standard approximation argument) and a second change of variables we see that the expression can then be rearranged to be 
\[ \iint \bigg( \int \widehat{g}\Big(\frac{y}{2} - x\Big) \overline{\widehat{g} \Big( \frac{y}{2} + x\Big)} e^{2\pi i \xi \cdot y} \,dy\bigg) \Big( \int e^{2\pi i \eta \cdot x} \,\widehat{w}(\eta, \eta \cdot \nabla \bm{Q}(\xi)) \,d\eta\Big) \mathbf{1}_{[-1,1]^d}(\xi) \,d\xi \,dx. \]
In the second factor at the integrand we recognise $\mathcal{T}_{\bm{Q}} w (x,\xi)$ via the Radon duality formula \eqref{eq:Radon_duality}. For the first factor, define the bilinear operator\footnote{This operator is variously known as \emph{ambiguity function} (in Signal Processing) or as \emph{cross-Wigner distribution} (in Quantum Mechanics).} 
\[ W(F_1,F_2)(x,\xi) :=  \int F_1 \Big(\frac{y}{2} - x\Big) F_2\Big( \frac{y}{2} + x\Big) e^{2\pi i \xi \cdot y} \,dy; \]
then we see that the expression has become 
\[ \iint W\big(\widehat{g}, \overline{\widehat{g}}\big)(x,\xi) \mathcal{T}_{\bm{Q}}w(x,\xi) \mathbf{1}_{[-1,1]^d}(\xi) \,d\xi\,dx. \]
By two applications of Cauchy-Schwarz (and using the fact that $\xi$ is localised) this is bounded by 
\begin{align*}
&\leq \int \Big( \int \big|W\big(\widehat{g}, \overline{\widehat{g}}\big)(x,\xi)\big|^2 \,dx\Big)^{1/2} \Big( \int |\mathcal{T}_{\bm{Q}}w(x,\xi)|^2 \,dx\Big)^{1/2} \mathbf{1}_{[-1,1]^d}(\xi) \,d\xi \\
& \leq \| \mathcal{T}_{\bm{Q}}w\|_{L^\infty_{\xi} (L^2_x)} \int \Big( \int \big|W\big(\widehat{g}, \overline{\widehat{g}}\big)(x,\xi)\big|^2 \,dx\Big)^{1/2}\mathbf{1}_{[-1,1]^d}(\xi) \,d\xi \\
& \lesssim \| \mathcal{T}_{\bm{Q}}w\|_{L^\infty(L^2)} \big\|W\big(\widehat{g}, \overline{\widehat{g}}\big) \big\|_{L^2(L^2)}.
\end{align*}
Finally, by identifying $W(F_1, F_2)$ with a Fourier transform, we see by Plancherel that $\|W(F_1, F_2)\|_{L^2(L^2)} = \|F_1\|_{L^2} \|F_2\|_{L^2}$, so that by a further application of Plancherel we have $\big\|W\big(\widehat{g}, \overline{\widehat{g}}\big) \big\|_{L^2(L^2)} \leq \|g\|_{L^2}^2$. Inequality \eqref{eq:weak_Mizohata_Takeuchi} follows.
\end{proof}
\section{Affine Invariant Measures and GIT}\label{section:affine_curvature_GIT}
In this section we will briefly illustrate the construction of the affine invariant measures of Gressman \cite{Gressman2019} that are foundational to the definition of well-curvedness adopted here. In particular, we will explain how the non-vanishing of these measures is connected to the concept of semistability in Geometric Invariant Theory (abbreviated GIT, from here onwards).
\subsection{Construction of the affine invariant measure}
In order to keep things simple, we will describe Gressman's construction only in the context of surfaces of codimension 2. The construction here given can extend easily to surfaces of other sufficiently low codimension (see Remark \ref{remark:other_codimensions}, but for the most general construction we refer the reader to \cite{Gressman2019}.\par
The construction rests on two elements, the first being a lemma that allows one to construct a density from an arbitrary $m$-linear functional and the second being a choice of a suitable $m$-linear functional that captures curvature and enjoys affine invariance. We begin from the lemma, for which we introduce the following notation: letting $\Phi$ be a $m$-linear functional on the real finite-dimensional vector space $V$ (that is, $\Phi \in (V^{\ast})^{\otimes m}$), we denote by $\rho$ the action of the special linear group $SL(V)$ on $(V^{\ast})^{\otimes m}$ given by 
\begin{equation}\label{eq:rho_action}
(\rho_M \Phi)(\bm{v}_1, \ldots, \bm{v}_m) :=  \Phi(M^{\top}\bm{v}_1, \ldots, M^{\top}\bm{v}_m) 
\end{equation}
for any $M \in SL(V)$ and any $\bm{v}_j \in V$. For $(\bm{v}_1, \ldots, \bm{v}_d)$ an ordered choice of $d$ vectors in $V$ (where $d = \dim V$), we let 
\[ \| \Phi \|_{(\bm{v}_1, \ldots, \bm{v}_d)} := \| (\Phi(\bm{v}_{j_1}, \ldots, \bm{v}_{j_m}))_{j_1,\ldots, j_m \in \{1, \ldots, d\}}\|, \]
where $\|\cdot\|$ denotes an arbitrary norm on $\mathbb{R}^{dm}$ (say, the $\ell^2$ norm for the sake of fixing one). The lemma is then as follows.
\begin{lemma}[Prop. 1 of \cite{Gressman2019}]\label{lemma:density_construction}
Let $V$ be a real vector space with $d = \dim V$ and let $\Phi \in (V^{\ast})^{\otimes m}$ be a $m$-linear functional on $V$. Then there is a constant $c_{\Phi} \geq 0$ such that for every $\bm{v}_1,\ldots, \bm{v}_d$
\[ \inf_{M \in SL(V)} \| \rho_M \Phi\|_{(\bm{v}_1, \ldots, \bm{v}_d)}^{d/m} = c_{\Phi} |\det\begin{pmatrix}
\bm{v}_1 & \cdots & \bm{v}_d
\end{pmatrix}|. \]
\end{lemma}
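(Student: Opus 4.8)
The plan is to exploit the homogeneity of the two quantities involved and reduce the statement to showing that the infimum is \emph{positive} and \emph{finite} whenever $\det(\bm v_1,\dots,\bm v_d)\neq 0$, since in that case a single positive constant $c_\Phi$ will be forced by the scaling behaviour. First I would treat the degenerate case: if $\Phi=0$ then both sides vanish (take $c_\Phi=0$), and if $\det(\bm v_1,\dots,\bm v_d)=0$ then the $\bm v_j$ are linearly dependent, so by multilinearity one can find $M_\lambda\in SL(V)$ (a shear fixing the dependency and stretching a transverse direction) with $\|\rho_{M_\lambda}\Phi\|_{(\bm v_1,\dots,\bm v_d)}\to 0$, forcing the left-hand side to be $0$ as well; this matches $c_\Phi\cdot 0$ for any value of $c_\Phi$.

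Next I would establish the key homogeneity relations. Fix a reference basis $(\bm e_1,\dots,\bm e_d)$ and set $c_\Phi:=\inf_{M\in SL(V)}\|\rho_M\Phi\|_{(\bm e_1,\dots,\bm e_d)}^{d/Q}$. For an arbitrary frame $(\bm v_1,\dots,\bm v_d)$ with $\det(\bm v_1,\dots,\bm v_d)=:\Delta\neq 0$, write $\bm v_j=N\bm e_j$ with $N\in GL(V)$, $\det N=\Delta$. Put $N=|\Delta|^{1/d}N_0$ with $N_0\in SL(V)$ (absorbing the sign if $\Delta<0$ is immaterial to the norm). Then by the definition \eqref{eq:rho_action} of the action,
\[
\|\rho_M\Phi\|_{(\bm v_1,\dots,\bm v_d)}=\|\rho_M\Phi\|_{(N\bm e_1,\dots,N\bm e_d)}=|\Delta|^{Q/d}\,\|\rho_{M N_0^{\top}}\Phi\|_{(\bm e_1,\dots,\bm e_d)}
\]
using that $\Lambda(N\bm e_{j_1},\dots,N\bm e_{j_Q})=(\rho_{?}\Lambda)(\dots)$ rescales homogeneously of degree $Q$ in the common scalar $|\Delta|^{1/d}$ pulled out of each of the $Q$ slots. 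Since $M\mapsto MN_0^{\top}$ is a bijection of $SL(V)$, taking the infimum over $M\in SL(V)$ and raising to the power $d/Q$ yields
\[
\inf_{M\in SL(V)}\|\rho_M\Phi\|_{(\bm v_1,\dots,\bm v_d)}^{d/Q}=|\Delta|\cdot \inf_{M\in SL(V)}\|\rho_M\Phi\|_{(\bm e_1,\dots,\bm e_d)}^{d/Q}=c_\Phi\,|\det(\bm v_1,\dots,\bm v_d)|,
\]
which is exactly the claimed identity. Thus the content of the lemma is entirely the scaling computation plus the observation that the reference infimum $c_\Phi$ is a finite non-negative number, which is automatic (it is an infimum of non-negative quantities, and it is finite because $M=\mathrm{Id}$ gives a finite upper bound).

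The one point requiring a little care — and what I expect to be the main (mild) obstacle — is verifying cleanly that $\Lambda(N\bm e_{j_1},\dots,N\bm e_{j_Q})$ scales as $|\Delta|^{Q/d}$ times the corresponding entry of $\rho_{N_0^{\top}}\Phi$ evaluated on the $\bm e_j$'s; this is just $Q$-linearity applied slot-by-slot once $N$ is factored as a scalar times an $SL$ element, but one must keep track of transposes to match the convention in \eqref{eq:rho_action}, i.e.\ that $\rho_M$ precomposes with $M^{\top}$. Concretely, $\rho_M\Phi$ evaluated at $(N\bm e_{j_i})_i$ equals $\Phi$ evaluated at $(M^{\top}N\bm e_{j_i})_i$, and writing $M^{\top}N=|\Delta|^{1/d}(M^{\top}N_0)$ pulls out the scalar from each of the $Q$ arguments; reinterpreting $M^{\top}N_0=(N_0^{\top}M)^{\top}$ and noting $N_0^{\top}M$ ranges over all of $SL(V)$ as $M$ does closes the loop. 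Everything else is formal. I would also remark that $c_\Phi$ is manifestly independent of the choice of norm up to a constant (any two norms on $\mathbb R^{dQ}$ are equivalent), consistent with the ``up to multiplicative constants'' caveat in the surrounding discussion, and that replacing the reference basis by another changes $c_\Phi$ by the very identity just proven, hence $c_\Phi$ as defined via \emph{any} fixed frame is the same number — so the statement ``there is a constant $c_\Phi$'' is well posed.
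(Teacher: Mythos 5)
The paper does not actually prove this lemma --- it is imported verbatim as Proposition 1 of \cite{Gressman2019} --- so there is no internal argument to compare against; judged on its own, your proof is correct and is essentially the natural (and Gressman's) argument: both sides scale linearly in $|\det(\bm{v}_1\cdots\bm{v}_d)|$ once a scalar is pulled out of each of the $Q$ slots, and the $SL(V)$-infimum is invariant under translation by a fixed element of $SL(V)$, so the value at any nondegenerate frame is forced to be $|\det|$ times the value at a reference frame, while the degenerate frames give $0$ on both sides. Three small points should be tightened. First, with the convention $\rho_M\Phi=\Phi(M^{\top}\cdot,\ldots,M^{\top}\cdot)$, factoring $N=\pm|\Delta|^{1/d}N_0$ produces the element $N_0^{\top}M$, not $MN_0^{\top}$ as written in your first display; you correct this yourself in the last paragraph, and since either map is a bijection of $SL(V)$ the infimum is unaffected, but the display as written is inconsistent with \eqref{eq:rho_action}. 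Second, when $\Delta<0$ the matrix $N_0$ you extract has determinant $-1$ and so does not lie in $SL(V)$; the clean fix is to observe that replacing $\bm{v}_1$ by $-\bm{v}_1$ multiplies each array entry $\Lambda(\bm{v}_{j_1},\ldots,\bm{v}_{j_Q})$ by a sign only, hence changes neither $\|\cdot\|_{(\bm{v}_1,\ldots,\bm{v}_d)}$ nor $|\det|$, so one may assume $\Delta>0$; your parenthetical ``immaterial to the norm'' gestures at this but deserves the one-line justification. Third, in the case $\det(\bm{v}_1\cdots\bm{v}_d)=0$ the matrices you need are not shears (a shear fixing the span would not shrink the entries): take $M_\lambda^{\top}$ to act as multiplication by $\lambda$ on the span $W$ of the $\bm{v}_j$ and by $\lambda^{-\dim W/(d-\dim W)}$ on a complement, so that $\det M_\lambda=1$ and every entry is $\lambda^{Q}\Phi(\bm{v}_{j_1},\ldots,\bm{v}_{j_Q})\to 0$, which gives the vanishing of the left-hand side. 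With these adjustments the argument is complete.
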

The lemma comes with the important caveat that the constant $c_\Phi$ could vanish (this will correspond to the surface being ``flat'' at a point).\par
The multi-linear functional to which Lemma \ref{lemma:density_construction} will be applied is called the \emph{Affine Curvature Tensor} and in the case of surfaces of codimension 2 it is defined as follows. Let $\phi : \Omega \to \mathbb{R}^{d+2}$ be an embedding of a $d$-dimensional manifold into $\mathbb{R}^{d+2}$ and for a fixed $p \in \Omega$ consider vector fields $X_1,\ldots, X_{d}, Y_1, Y_2, Z_1, Z_2$ defined in a neighbourhood of $p$. Then we define the Affine Curvature Tensor $\mathcal{A}_{p}^{\phi}$ to be
\begin{align*}
&\mathcal{A}_{p}^{\phi}(X_1,\ldots, X_{d}, Y_1, Y_2, Z_1, Z_2) \\
& \hspace{1em} := \det \begin{pmatrix}
X_1 \phi(p) & \ldots & X_d \phi(p) & Y_1 Y_2 \phi(p) & Z_1 Z_2 \phi(p)
\end{pmatrix}.
\end{align*} 
It can be shown that $\mathcal{A}_{p}^{\phi}$ is indeed a tensor, in the sense that its value depends only on the value of the vector fields at $p$ (see Prop. 2 of \cite{Gressman2019}); therefore $\mathcal{A}_{p}^{\phi}$ can be identified with an element of $((T_p \Omega)^{\ast})^{\otimes (d+4)}$, that is, with a $(d+4)$-linear functional on the tangent space at $p$. Heuristically, the Affine Curvature Tensor probes the Taylor expansion of $\phi$ around any given point (hence the second derivatives $Y_1 Y_2 \phi$ and $Z_1 Z_2 \phi$ in the definition, which detect the quadratic terms). It has moreover the important property of being \emph{equi-affine invariant}, meaning that if $T$ is any affine transformation of $\mathbb{R}^{d+2}$ that preserves volumes, then we have $\mathcal{A}_{p}^{T\circ \phi} = \mathcal{A}_{p}^{\phi}$.\par
Combining Lemma \ref{lemma:density_construction} with the Affine Curvature Tensor one can then construct a surface measure on $\Sigma = \phi(\Omega)$ as follows. Define first of all the density 
\[ \delta_{\mathcal{A}}^{p} (X_1,\ldots, X_d) := \inf_{M \in SL(T_p \Omega)} \|\rho_M \mathcal{A}_{p}^{\phi}\|_{(X_1,\ldots,X_d)}^{d/(d+4)}. \]
Then one can define the surface measure $\nu_{\Sigma}$ via push-forward: for a ball $B \subset \mathbb{R}^d$ and a coordinate chart $\varphi : B \to \Omega$, we let
\[ \int_{\varphi(B)} g \,d\mu_{\mathcal{A}} := \int_{B} g(\varphi(y)) \,\delta_{\mathcal{A}}^{\varphi(y)}(d\varphi(\partial_{y_1}), \ldots, d\varphi(\partial_{y_d}))\,dy_1 \ldots \,dy_d; \]
finally, we define the \emph{affine invariant surface measure} $\nu_{\Sigma}$ by 
\[ \int_{\Sigma} f \,d\nu_{\Sigma} := \int_{\Omega} f \circ \phi \,d\mu_{\mathcal{A}}. \]
By Lemma \ref{lemma:density_construction}, the definition of $\mu_{\mathcal{A}}$ is consistent on overlapping charts, giving a measure on the whole $\Omega$ (and thus on the whole $\Sigma$); moreover, it is not hard to see that the definition is independent of the particular embedding and that $\nu_{\Sigma}$ inherits the equi-affine invariance of $\mathcal{A}_{p}^{\phi}$. 
\begin{remark}\label{remark:other_codimensions}
The construction above is readily extended to $d$-dimensional submanifolds of $\mathbb{R}^{d+r}$ such that the codimension satisfies $r \leq \frac{d(d+1)}{2}$. Indeed, it suffices to modify the affine curvature tensor to be 
\begin{align*}
&\mathcal{A}_{p}^{\phi}(X_1,\ldots, X_{d}, Y_1, Z_1, \ldots , Y_r, Z_r) \\
& \hspace{1em} := \det \begin{pmatrix}
X_1 \phi(p) & \ldots & X_d \phi(p) & Y_1 Z_1 \phi(p) & \cdots & Y_r Z_r \phi(p)
\end{pmatrix};
\end{align*}
then the density $ \delta_{\mathcal{A}}^{p} $ is given by 
\[  \delta_{\mathcal{A}}^{p} (X_1,\ldots, X_d) := \inf_{M \in SL(T_p \Omega)} \|\rho_M \mathcal{A}_{p}^{\phi}\|_{(X_1,\ldots,X_d)}^{d/(d+2r)} \]
and the rest of the construction is the same. The codimension condition $r \leq \frac{d(d+1)}{2}$ has to do with the Taylor expansion of $\phi$ and in particular with the fact that there are exactly $\frac{d(d+1)}{2}$ monomials of degree $2$ in $d$ many variables; to deal with higher codimensions yet, the tensor $\mathcal{A}_{p}^{\phi}$ needs to be modified by introducing derivatives of progressively higher orders. The fully general construction is presented in \cite{Gressman2019}.
\end{remark}
The case in which we are interested is $\phi(\xi) = (\xi, Q_1(\xi), Q_2(\xi))$ (with $\Omega = [-1,1]^d$); we see then that the measure $\nu_{\Sigma}$ on $\Sigma = \Sigma(Q_1,Q_2)$ is given by 
\[ \int_{\Sigma(Q_1,Q_2)} f \,d\nu_{\Sigma} = \int_{[-1,1]^d} f(\phi(\xi))\, \delta^{\xi}_{\mathcal{A}}(\partial_1, \ldots, \partial_d) \,d\xi. \]
Let us write $(M \partial)_j := M^{\top} \partial_j$; thus if $M_{ij}$ denotes the $(i,j)$-entry of $M$, we have $(M\partial)_j = \sum_{k=1}^{d} M_{jk}\partial_k$. Expanding the definitions, we have for the density $d\nu_{\Sigma}/d\xi$
\begingroup
\allowdisplaybreaks	
\begin{align*}
&\frac{d \nu_{\Sigma}}{d\xi} = \delta^{\xi}_{\mathcal{A}}(\partial_1, \ldots, \partial_d) \\
& = \bigg[ \inf_{M \in SL(\mathbb{R}^d)} \Big(\sum_{\substack{i_1, \ldots, i_{d} \\ j_1, j_2, k_1,k_2}} \big|\det \big(\begin{matrix}
(M \partial)_{i_1} \phi(\xi) & \cdots & (M\partial)_{i_d}\phi(\xi) \end{matrix} \\
& \hspace{11em} \begin{matrix} (M\partial)_{j_1}(M\partial)_{j_2} \phi(\xi) & (M\partial)_{k_1}(M\partial)_{k_2} \phi(\xi)
\end{matrix}\big)\big|^2 \Big)^{1/2} \bigg]^{d/(d+4)}.
\end{align*}
\endgroup
The expression simplifies significantly due to the special form of $\phi$. Indeed, observe that the first $d$ components of $(M\partial)_i \phi$ are simply the $i$-th column of $M^{\top}$, and the first $d$ components of $(M\partial)_{j_1}(M\partial)_{j_2} \phi$ are identically zero; therefore the determinant vanishes unless $i_1, \ldots, i_d$ is a permutation of $1,\ldots, d$. Since $\det M = 1$ we obtain for the sum of determinants in the last expression
\begingroup
\allowdisplaybreaks
\begin{align*}
& \sum_{\substack{i_1, \ldots, i_{d} \\ j_1, j_2, \\ k_1,k_2}} \begin{vmatrix}
(M \partial)_{i_1} \phi(\xi) & \cdots & (M\partial)_{i_d}\phi(\xi) & (M\partial)_{j_1}(M\partial)_{j_2} \phi(\xi) & (M\partial)_{k_1}(M\partial)_{k_2} \phi(\xi)
\end{vmatrix}^2 \\
& = d! \sum_{j_1,j_2,k_1,k_2} \begin{vmatrix}
(M\partial)_{j_1}(M\partial)_{j_2} Q_1(\xi) & (M\partial)_{k_1}(M\partial)_{k_2} Q_1(\xi) \\
(M\partial)_{j_1}(M\partial)_{j_2} Q_2(\xi) & (M\partial)_{k_1}(M\partial)_{k_2} Q_2(\xi)
\end{vmatrix}^2.
\end{align*} 
\endgroup
\begin{remark}
When $Q_1,Q_2$ are quadratic forms the last expression is clearly independent of $\xi$ and thus we see that $d\nu_{\Sigma}/d\xi$ is a constant, as claimed in Section \ref{section:main_results}. According to Definition \ref{defn:well_curved} the surface $\Sigma(Q_1,Q_2)$ is well-curved if this constant is non-zero, and flat otherwise.
\end{remark}
The expression can be massaged further: it is immediate that 
\[ (M\partial)_{j}(M\partial)_{k} Q_i(\xi) = (M \nabla^2 Q_i(\xi) M^{\top})_{j,k}, \]
and therefore the sum above coincides with 
\[  \sum_{j_1,j_2,k_1,k_2} \begin{vmatrix}
(M \nabla^2 Q_1 M^{\top})_{j_1, j_2} & (M \nabla^2 Q_1 M^{\top})_{k_1, k_2} \\
(M\nabla^2 Q_2 M^{\top})_{j_1, j_2} & (M \nabla^2 Q_2 M^{\top})_{k_1, k_2}
\end{vmatrix}^2. \]
We can summarise the above as follows. Using again $A,B$ in place of $\nabla^2 Q_1, \nabla^2 Q_2$, define the quadrilinear functional
\[ \mathscr{A}_{A,B}(Y_1, Y_2; Z_1, Z_2) := \begin{vmatrix}
\langle A Y_1, Y_2 \rangle  & \langle A Z_1, Z_2 \rangle \\
\langle B Y_1, Y_2 \rangle  & \langle B Z_1, Z_2 \rangle 
\end{vmatrix} \]
and notice that $\rho$ given by \eqref{eq:rho_action} acts on this functional by 
\[ \rho_M \mathscr{A}_{A,B} = \mathscr{A}_{MAM^{\top}, MBM^{\top}}. \]
Then the density of $\nu_{\Sigma}$ for $\Sigma = \Sigma(Q_1,Q_2)$ is given by 
\[ \frac{d\nu_{\Sigma}}{d\xi} = c_d \inf_{M \in SL(\mathbb{R}^d)} \| \mathscr{A}_{MAM^{\top}, MBM^{\top}}\|_{\partial}^{d/(d+4)}, \]
where $c_d$ is an absolute constant and we have shortened $\|\cdot\|_{\partial} := \|\cdot\|_{(\partial_1, \ldots, \partial_d)}$. The fact that this quantity depends only on the Hessians has been made explicit. The reparametrisation and equi-affine invariances have also been made explicit in the following way: firstly, it is obvious that the density, as a function of the Hessians $A,B$, is invariant with respect to the ``reparametrisation action'' of $SL(\mathbb{R}^d)$ on pairs of symmetric matrices given by (with a little abuse of notation)
\begin{equation}\label{eq:rho_action_AB}
\rho_M(A,B) := (MAM^{\top}, MBM^{\top}). 
\end{equation}
Secondly (and slightly less obviously), the density is also invariant as a function of $A,B$ with respect to the action $\sigma$ of $SL(\mathbb{R}^2)$ given by 
\begin{equation}\label{eq:sigma_action_AB}
\text{ for } N = \begin{pmatrix}
\lambda & \mu \\ \lambda' & \mu'
\end{pmatrix} \in SL(\mathbb{R}^2), \qquad \sigma_N (A,B) := (\lambda A + \mu B, \lambda'A + \mu' B); 
\end{equation}
this is a consequence of the fact that $\mathscr{A}_{\cdot, \cdot}$ itself is $\sigma$-invariant, as can be seen by a straightforward calculation. These observations about invariances lead us directly into the next subsection.
\subsection{Connection to GIT}
GIT is the branch of Algebraic Geometry that studies group actions on algebraic varieties (of which vector spaces are a particularly simple instance); it provides a way to construct well-behaved quotient spaces via the study of polynomials that are invariant under these actions. One of the deepest insights of \cite{Gressman2019} is the realisation that the non-vanishing of the affine invariant surface measure is equivalent to the concept of semistability in GIT. Below we will explain this connection, limiting ourselves to the bare minimum of theory in order not to encumber the exposition.\par 
We dive right in by stating a lemma from \cite{Gressman2019} that connects the density $d\nu_{\Sigma}/d\xi$ to certain invariant polynomials; the statement will be customised to our particular situation. Recall that the quadrilinear form $\mathscr{A}_{A,B}$ is an element of the vector space of quadrilinear functionals on $\mathbb{R}^d$, that is $V := ((\mathbb{R}^d)^{\ast})^{\otimes 4}$, and that the $\rho$ action given by \eqref{eq:rho_action} is defined over the whole of $V$. A real polynomial $P$ on $V$ (that is, a polynomial in the coefficients of elements $\mathscr{A} \in V$) is $\rho$-\emph{invariant} if for every $\mathscr{A} \in V$ and every $M \in SL(\mathbb{R}^d)$
\[ P (\rho_M \mathscr{A}) = P(\mathscr{A}). \]
These invariant polynomials form a ring, which is moreover finitely generated (this is a celebrated theorem of Hilbert \cite{Hilbert}). It turns out that one can estimate the density via any set of homogeneous generators\footnote{It is easy to see that, since $\rho$ commutes with dilations, given any set of generators one can form a set of homogeneous generators.} of the invariant polynomials.
\begin{lemma}[Lemma 2 of \cite{Gressman2019}]\label{lemma:Gressman_GIT_lemma}
Let $P_1, \ldots, P_N$ be homogeneous polynomials on $V = ((\mathbb{R}^d)^{\ast})^{\otimes 4}$ that generate the ring of $\rho$-invariant polynomials. Then for every $\mathscr{A} \in V$ we have
\[ \inf_{M \in SL(\mathbb{R}^d)} \| \rho_M \mathscr{A} \|_{\partial} \sim \max_{j \in \{1,\ldots,N\}} |P_{j}(\mathscr{A})|^{1/{\deg P_j}}. \]
\end{lemma}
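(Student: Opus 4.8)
The plan is to prove the two inequalities separately: the lower bound on $\inf_{M}\|\rho_M\mathscr{A}\|_{\partial}$ is elementary, while the upper bound rests on the basic machinery of Geometric Invariant Theory. For the easy direction I would note that each $P_j$, being a homogeneous polynomial of degree $\deg P_j$ on the finite-dimensional space $V$, satisfies $|P_j(\mathscr{B})|\le C_j\|\mathscr{B}\|_{\partial}^{\deg P_j}$ for all $\mathscr{B}\in V$, with $C_j:=\sup_{\|\mathscr{B}\|_{\partial}=1}|P_j(\mathscr{B})|$; applying this to $\mathscr{B}=\rho_M\mathscr{A}$ and using $\rho$-invariance $P_j(\rho_M\mathscr{A})=P_j(\mathscr{A})$ gives $|P_j(\mathscr{A})|^{1/\deg P_j}\le C_j^{1/\deg P_j}\,\|\rho_M\mathscr{A}\|_{\partial}$ for every $M$, and taking the infimum over $M$ and the maximum over $j$ concludes.

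For the reverse inequality, $\inf_M\|\rho_M\mathscr{A}\|_{\partial}\lesssim\max_j|P_j(\mathscr{A})|^{1/\deg P_j}$, I would first reduce to \emph{minimal vectors}. Given $\mathscr{A}$, the set of $\mathscr{B}$ in the orbit closure $\overline{SL(\mathbb{R}^d)\cdot\mathscr{A}}$ with $\|\mathscr{B}\|_{\partial}\le\|\mathscr{A}\|_{\partial}$ is compact, so $\|\cdot\|_{\partial}$ attains its minimum over the orbit closure at some $\mathscr{A}_0$; by density of the orbit and continuity this minimum equals $\inf_M\|\rho_M\mathscr{A}\|_{\partial}$, and since the orbit closure is $SL(\mathbb{R}^d)$-stable one has $\|\rho_M\mathscr{A}_0\|_{\partial}\ge\|\mathscr{A}_0\|_{\partial}$ for all $M$, i.e.\ $\mathscr{A}_0$ is a minimal vector. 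Because the $P_j$ are continuous and $\rho$-invariant they are constant along orbit closures, so $P_j(\mathscr{A})=P_j(\mathscr{A}_0)$ for all $j$; hence it suffices to prove $\|\mathscr{B}\|_{\partial}\lesssim\max_j|P_j(\mathscr{B})|^{1/\deg P_j}$ for $\mathscr{B}$ ranging over the set $\mathcal{M}$ of minimal vectors, which is closed (a limit of minimal vectors is minimal, by continuity of the norm and of the action).

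This last estimate I would settle by homogeneity and compactness. The quotient $\mathscr{B}\mapsto\big(\max_j|P_j(\mathscr{B})|^{1/\deg P_j}\big)\big/\|\mathscr{B}\|_{\partial}$ is invariant under dilations and continuous on $V\setminus\{0\}$, so it is enough to bound it below on the compact set $\mathcal{M}\cap\{\|\mathscr{B}\|_{\partial}=1\}$, where it attains its infimum at some $\mathscr{B}_\ast$. If this infimum were $0$ then $P_j(\mathscr{B}_\ast)=0$ for all $j$, so $\mathscr{B}_\ast$ would lie in the null cone --- the common zero set of all non-constant homogeneous $\rho$-invariants, which coincides with $\bigcap_j\{P_j=0\}$ exactly because the $P_j$ generate the invariant ring. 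The basic GIT fact (over $\mathbb{R}$: the real Hilbert--Mumford criterion of Birkes, together with the separation of disjoint closed invariant subsets by invariants for the reductive group $SL(\mathbb{R}^d)$) then forces $0\in\overline{SL(\mathbb{R}^d)\cdot\mathscr{B}_\ast}$, hence $\inf_M\|\rho_M\mathscr{B}_\ast\|_{\partial}=0$; this contradicts $\mathscr{B}_\ast$ being a minimal vector of norm $1$, for which $\|\rho_M\mathscr{B}_\ast\|_{\partial}\ge 1$ for every $M$. So the infimum is strictly positive and the lemma follows.

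The one step I expect to be genuinely delicate is the invocation of the GIT black box --- that a point at which every non-constant invariant vanishes has the origin in its orbit closure --- and in particular making sure it applies to the \emph{real} group $SL(\mathbb{R}^d)$ rather than to its complexification; the relevant references here are Birkes and Richardson--Slodowy. It is worth stressing that the completeness of the system $P_1,\dots,P_N$ is used essentially, through the identification of $\bigcap_j\{P_j=0\}$ with the null cone: a soft argument based only on the fact that the zero sets of $\mathscr{A}\mapsto\max_j|P_j(\mathscr{A})|$ and of $\mathscr{A}\mapsto\inf_M\|\rho_M\mathscr{A}\|_{\partial}$ coincide would \emph{not} be enough to pin down the comparison with the precise exponents $1/\deg P_j$.
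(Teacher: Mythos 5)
Your proof is correct, but note that the paper itself does not prove this statement: it is imported verbatim as Lemma 2 of \cite{Gressman2019}, with only a remark that it is ``a direct consequence of the real Hilbert--Mumford criterion.'' Your argument -- the easy bound from homogeneity and invariance, then a Kempf--Ness style reduction to minimal vectors, closedness and dilation-invariance of the set of minimal vectors, compactness on the unit sphere, and the null-cone/Birkes input to rule out a vanishing infimum -- is exactly the kind of proof that remark points to (and is in the spirit of Gressman's own argument and of Richardson--Slodowy's real Kempf--Ness theory), so it supplies the details the paper omits.

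One caveat on the step you yourself flag as delicate: the auxiliary principle you invoke, ``separation of disjoint closed invariant subsets by invariants'' for $SL(\mathbb{R}^d)$, is false over $\mathbb{R}$ in general (e.g.\ the positive- and negative-definite binary quadratic forms give disjoint closed $SL(2,\mathbb{R})$-orbits on which every polynomial invariant agrees). What you actually need is weaker and true: if every non-constant homogeneous $\rho$-invariant vanishes at the real point $\mathscr{B}_\ast$, then $0$ lies in the classical closure of $SL(\mathbb{R}^d)\cdot\mathscr{B}_\ast$. The correct route is: real invariants of a real point determine the complex ones (by Zariski density of $SL(\mathbb{R}^d)$ in $SL(\mathbb{C}^d)$ the complexified invariant ring restricts to the real one), so $\mathscr{B}_\ast$ lies in the complex null cone, hence $0$ is in the Zariski closure of its orbit, and Birkes' rational Hilbert--Mumford criterion then produces a \emph{real} one-parameter subgroup driving $\mathscr{B}_\ast$ to $0$ in the standard topology, contradicting minimality. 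With that rewording the argument is complete; the rest of your reductions (attainment of the infimum on the orbit closure, constancy of the $P_j$ there, closedness of the minimal-vector cone, and the use of the generating property to identify $\bigcap_j\{P_j=0\}$ with the null cone, which is indeed where the exponents $1/\deg P_j$ enter) are all sound.
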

By taking $\mathscr{A} = \mathscr{A}_{A,B}$ the left-hand side becomes (a multiple of) $(d\nu_{\Sigma}/d\xi)^{(d+4)/d}$, so that the lemma provides a way to estimate the density in terms of the generators via the expression at the right-hand side. The implicit constants depend on the choice of generators.\par
In proving the characterisation of well-curvedness given by Theorem \ref{thm:well_curvedness_characterisation}, we will make use of a straightforward consequence of Lemma \ref{lemma:Gressman_GIT_lemma}. Let $\operatorname{Sym}^2(\mathbb{R}^d)$ denote the space of real symmetric $d \times d$ matrices; then the actions $\rho, \sigma$, given by \eqref{eq:rho_action_AB}, \eqref{eq:sigma_action_AB} respectively, combine into an action of $SL(\mathbb{R}^d) \times SL(\mathbb{R}^2)$ on $\operatorname{Sym}^2(\mathbb{R}^d) \times \operatorname{Sym}^2(\mathbb{R}^d)$ denoted $\rho \times \sigma$ and given by 
\[ (\rho \times \sigma)_{M,N}(A,B) := \rho_{M}(\sigma_{N}(A,B)) \]
for any $M \in SL(\mathbb{R}^d), N \in SL(\mathbb{R}^2)$ (observe that $\rho$ and $\sigma$ commute, so the order is inconsequential). We say that a polynomial $Q$ on $\operatorname{Sym}^2(\mathbb{R}^d) \times \operatorname{Sym}^2(\mathbb{R}^d)$ is $(\rho \times \sigma)$-invariant if for every pair of real symmetric matrices $A,B$ and every $M \in SL(\mathbb{R}^d), N \in SL(\mathbb{R}^2)$
\[ Q((\rho \times \sigma)_{M,N}(A,B)) = Q(A,B). \]
The lemma we will use is then the following.
\begin{lemma}\label{lemma:GIT_equivalence}
Let $Q_1,Q_2$ be quadratic forms on $\mathbb{R}^d$, with associated surface $\Sigma = \Sigma(Q_1,Q_2)$, and let $A,B$ be the Hessians $\nabla^2 Q_1, \nabla^2 Q_2$ respectively. Then the density $d\nu_{\Sigma}/ d\xi$ is non-zero if and only if there exists a $(\rho \times \sigma)$-invariant polynomial $Q$ on $\operatorname{Sym}^2(\mathbb{R}^d) \times \operatorname{Sym}^2(\mathbb{R}^d)$ such that $Q(0,0)=0$ but
\[ Q(A,B) \neq 0. \]
\end{lemma}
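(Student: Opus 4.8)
The plan is to deduce Lemma \ref{lemma:GIT_equivalence} from Lemma \ref{lemma:Gressman_GIT_lemma} by carefully tracking the two symmetry groups in play. First I would recall from the previous subsection that, for $\mathscr{A} = \mathscr{A}_{A,B}$, Lemma \ref{lemma:Gressman_GIT_lemma} gives
\[ \Big(\frac{d\nu_\Sigma}{d\xi}\Big)^{(d+4)/d} \sim \inf_{M \in SL(\mathbb{R}^d)} \|\rho_M \mathscr{A}_{A,B}\|_{\partial} \sim \max_{j} |P_j(\mathscr{A}_{A,B})|^{1/\deg P_j}, \]
where $P_1,\dots,P_N$ generate the ring of $\rho$-invariant polynomials on $V = ((\mathbb{R}^d)^\ast)^{\otimes 4}$. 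Thus $d\nu_\Sigma/d\xi \neq 0$ if and only if some $\rho$-invariant polynomial $P_j$ satisfies $P_j(\mathscr{A}_{A,B}) \neq 0$; note that any homogeneous generator of positive degree automatically vanishes at $0 \in V$, and the map $(A,B) \mapsto \mathscr{A}_{A,B}$ sends $(0,0)$ to $0$. The task is then to convert ``$\rho$-invariant polynomial on $V$ non-vanishing at $\mathscr{A}_{A,B}$'' into ``$(\rho \times \sigma)$-invariant polynomial on $\operatorname{Sym}^2(\mathbb{R}^d)^2$ non-vanishing at $(A,B)$''.

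For the forward direction, I would take a homogeneous $\rho$-invariant $P_j$ with $P_j(\mathscr{A}_{A,B}) \neq 0$ and define $Q(A,B) := P_j(\mathscr{A}_{A,B})$, viewing $\mathscr{A}_{A,B}$ as a polynomial (in fact quadratic) function of the entries of $A$ and $B$; hence $Q$ is a polynomial on $\operatorname{Sym}^2(\mathbb{R}^d)^2$. It is $\rho$-invariant because $\rho_M \mathscr{A}_{A,B} = \mathscr{A}_{MAM^\top, MBM^\top}$ and $P_j$ is $\rho$-invariant on $V$; it is $\sigma$-invariant because $\mathscr{A}_{\cdot,\cdot}$ is itself $\sigma$-invariant, as noted in the previous subsection (the determinant defining $\mathscr{A}_{A,B}$ picks up $\det N = 1$ under $\sigma_N$). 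So $Q$ is $(\rho\times\sigma)$-invariant, $Q(0,0) = P_j(0) = 0$, and $Q(A,B) \neq 0$. For the converse, suppose $Q$ is $(\rho\times\sigma)$-invariant with $Q(0,0)=0$ and $Q(A,B) \neq 0$. In particular $Q$ is $\rho$-invariant under the reparametrisation action, so $\inf_{M} |Q(MAM^\top, MBM^\top)| = |Q(A,B)| > 0$; this already forces $\inf_M \|\rho_M \mathscr{A}_{A,B}\|_\partial > 0$ — because if that infimum were zero one could find $M_k$ with $\mathscr{A}_{M_k A M_k^\top, M_k B M_k^\top} \to 0$ in $V$, and then $Q$ of these pairs would tend to $Q$ evaluated at a limiting object that is indistinguishable (through $\mathscr{A}$) from $(0,0)$; one has to argue that $Q$, or at least the relevant $\rho$-invariant information, factors through the map $(A,B)\mapsto \mathscr{A}_{A,B}$ — equivalently, that whenever $\mathscr{A}_{A',B'} = \mathscr{A}_{A,B}$ one can relate $(A',B')$ to $(A,B)$ by a $\sigma$-transformation (up to the kernel of the map), so that $\sigma$-invariance of $Q$ kills the ambiguity. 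Then Lemma \ref{lemma:Gressman_GIT_lemma} gives $d\nu_\Sigma/d\xi \neq 0$.

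The main obstacle I anticipate is precisely this last point: making rigorous the claim that a $\sigma$-invariant polynomial $Q$ on pairs of symmetric matrices ``descends'' to a function of $\mathscr{A}_{A,B}$, i.e. understanding the fibers of $(A,B) \mapsto \mathscr{A}_{A,B}$. The cleanest route is probably not to analyze these fibers directly but to argue at the level of invariant rings: show that restriction along $(A,B) \mapsto \mathscr{A}_{A,B}$ maps the ring of $\rho$-invariants on $V$ onto (or at least into, with non-vanishing locus controlled by) the ring of $(\rho\times\sigma)$-invariants on $\operatorname{Sym}^2(\mathbb{R}^d)^2$, using that $\mathscr{A}_{\cdot,\cdot}$ is the universal $\sigma$-invariant quadratic map built from $(A,B)$ (it is, up to the $SL(\mathbb{R}^2)$-invariant theory of a pair of vectors/forms, essentially the unique covariant of the right bidegree). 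Alternatively, and perhaps more economically for the paper's purposes, one can sidestep surjectivity entirely: for the converse it suffices to exhibit, given a non-vanishing $(\rho\times\sigma)$-invariant $Q$, \emph{some} $\rho$-invariant polynomial on $V$ non-vanishing at $\mathscr{A}_{A,B}$ — and here one can simply invoke that $\inf_M \|\rho_M \mathscr{A}_{A,B}\|_\partial$ is comparable, via Lemma \ref{lemma:Gressman_GIT_lemma}, to $\max_j |P_j(\mathscr{A}_{A,B})|^{1/\deg P_j}$, then show this infimum is positive by a compactness/degeneration argument driven by the hypothesis $Q(A,B)\neq 0$ together with $\sigma$-invariance of $Q$. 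I would write the proof in the latter style, isolating the descent claim as a short sublemma and proving it by the covariant-theory observation about $\mathscr{A}_{\cdot,\cdot}$ being the essentially unique $SL(\mathbb{R}^2)$-covariant of its bidegree.
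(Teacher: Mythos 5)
Your overall architecture is the paper's: the forward implication by composing a non-vanishing $\rho$-invariant generator with $(X,Y)\mapsto\mathscr{A}_{X,Y}$, and the converse by first ``descending'' $Q$ through $\mathscr{A}$ and then running a degeneration/continuity argument against Lemma \ref{lemma:Gressman_GIT_lemma} (a sequence $M_k$ with $\rho_{M_k}\mathscr{A}_{A,B}\to 0$ forces $Q(A,B)=P(\mathscr{A}_{A,B})=P(0)=Q(0,0)=0$, a contradiction). The genuine gap is the descent step itself, which you leave as a sublemma supported only by the assertion that $\mathscr{A}_{\cdot,\cdot}$ is ``the essentially unique $SL(\mathbb{R}^2)$-covariant of its bidegree''. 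That assertion, even granted, does not give what you need: uniqueness of a covariant of a fixed bidegree says nothing about whether \emph{every} $\sigma$-invariant polynomial is a polynomial in the components of that covariant. Your alternative, fiber-theoretic formulation is moreover false as stated: the fibers of $(A,B)\mapsto\mathscr{A}_{A,B}$ are not $\sigma$-orbits (for instance every proportional pair $(A,cA)$, for arbitrary $A$, is sent to $0$, while $\sigma$ preserves the span of the pair), and even if they were, a set-theoretic descent would only define $P$ on the image of $\mathscr{A}$, with no control on its continuity at $0$ relative to that image -- which is exactly what the degeneration argument requires, since the preimages $(M_kAM_k^\top,M_kBM_k^\top)$ need not converge even though $\mathscr{A}$ of them does.

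The ingredient that closes this gap in the paper is the First Fundamental Theorem for $SL(2)$-vector invariants (Chapter II of \cite{GraceYoung}): under $\sigma$ each pair of entries $(X_{j_1 j_2},Y_{j_1 j_2})$ transforms as a vector in the standard representation of $SL(\mathbb{R}^2)$, so every $\sigma$-invariant polynomial on $\operatorname{Sym}^2(\mathbb{R}^d)\times\operatorname{Sym}^2(\mathbb{R}^d)$ is a polynomial in the bracket determinants $X_{j_1 j_2}Y_{k_1 k_2}-X_{k_1 k_2}Y_{j_1 j_2}$, which are precisely the components $\mathscr{A}_{X,Y}(\partial_{j_1},\partial_{j_2};\partial_{k_1},\partial_{k_2})$. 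This yields an honest polynomial $P$ with $Q(X,Y)=P(\mathscr{A}_{X,Y})$ identically, and note that $P$ need not be $\rho$-invariant on all of $V$: the identity $P(\rho_M\mathscr{A}_{X,Y})=P(\mathscr{A}_{X,Y})$, valid along the image of $\mathscr{A}$ because $\rho_M\mathscr{A}_{X,Y}=\mathscr{A}_{MXM^\top,MYM^\top}$ and $Q$ is $\rho$-invariant, is all the continuity argument uses. With this one theorem inserted in place of your covariant-uniqueness heuristic, your outline becomes the paper's proof.
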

We point out that since the density is defined pointwise the lemma extends to arbitrary surfaces parametrised by $(\xi, \varphi_1(\xi), \varphi_2(\xi))$ -- just replace $A,B$ with the Hessians of $\varphi_1, \varphi_2$ at the desired point.
\begin{proof}
By Lemma \ref{lemma:Gressman_GIT_lemma}, if the density $d\nu_{\Sigma}/ d\xi$ is non-zero then there exists a $\rho$-invariant homogeneous polynomial $P$ on $V = ((\mathbb{R}^d)^{\ast})^{\otimes 4}$ such that $P(\mathscr{A}_{A,B}) \neq 0$; but since $\mathscr{A}_{X,Y}$ is $\sigma$-invariant, we see that the polynomial $Q(X,Y) := P(\mathscr{A}_{X,Y})$ is $(\rho\times \sigma)$-invariant, $Q(0,0)=0$ and $Q(A,B) \neq 0$.\par
Conversely, assume that there exists such a polynomial $Q(X,Y)$ as per the statement. The polynomial is in particular $\sigma$-invariant, and it is a well-known fact that $\sigma$-invariant polynomials are generated by determinants 
\[ \begin{vmatrix}
X_{j_1, j_2} & X_{k_1,k_2} \\
Y_{j_1, j_2} & Y_{k_1,k_2}
\end{vmatrix} \]
(this is known as the First Fundamental Theorem for $SL(2)$-invariants, see for example Chapter II of \cite{GraceYoung}). However, the above is nothing but the coefficient $\mathscr{A}_{A,B}(\partial_{j_1}, \partial_{j_2}; \partial_{k_1}, \partial_{k_2})$, and therefore there exists some polynomial $P$ such that $Q(X,Y) = P(\mathscr{A}_{X,Y})$ for all $(X,Y) \in \operatorname{Sym}^2(\mathbb{R}^d) \times \operatorname{Sym}^2(\mathbb{R}^d)$. Since $Q$ is also $\rho$-invariant, we see that 
\begin{equation}\label{eq:partial_rho_invariance}
P(\rho_{M} \mathscr{A}_{X,Y}) = P(\mathscr{A}_{X,Y}). 
\end{equation}
Assume now by way of contradiction that $d\nu_{\Sigma}/d\xi = 0$, which in particular means that $\inf_{M \in SL(\mathbb{R}^d)} \| \rho_{M} \mathscr{A}_{A,B}\|_{\partial} = 0$. Thus there exists a sequence $(M_k)_{k \in \mathbb{N}} \subset SL(\mathbb{R}^d)$ such that $ \| \rho_{M_k} \mathscr{A}_{A,B}\|_{\partial} \to 0$ as $k \to \infty$; in particular, every component of $\rho_{M_k}\mathscr{A}_{A,B}$ tends to zero. By \eqref{eq:partial_rho_invariance} this implies by continuity that $Q(A,B) = P(\mathscr{A}_{A,B})=0$, but this is a contradiction.
\end{proof}
The existence of a non-constant invariant polynomial that does not vanish on $(A,B)$ is equivalent, in GIT language, to $(A,B)$ being semistable. More precisely, consider an affine variety $\mathscr{C}$ given as the zero set of a finite collection of homogeneous polynomials; observe that $0 \in \mathscr{C}$ and that if $x \in \mathscr{C}$ then $\lambda x \in \mathscr{C}$ for every $\lambda \in \mathbb{R}$. We will call $\mathscr{C}$ a \emph{cone}. Given an action $\theta : G \times \mathscr{C} \to \mathscr{C}$ of a linearly reductive algebraic group $G$ on the cone $\mathscr{C}$, and assuming that the action commutes with dilations,\footnote{Any such action is always assumed to be algebraic, in the sense that there exist embeddings of $G, \mathscr{C}$ as affine varieties in affine spaces such that the action is given by a polynomial map in the resulting affine coordinates.} a point $x \in \mathscr{C}$ is said to be $\theta$-\emph{semistable} if 
\[ 0 \not\in \operatorname{Cl_{Zar}}(\{ \theta_g(x) : g \in G\}), \]
that is, if $0$ is not contained in the Zariski closure of the orbit of $x$; else the point is called $\theta$-\emph{unstable}. Notice that semistability is a property of the orbit and not of the particular point. It is immediate to see that if there exists a $\theta$-invariant polynomial $P$ such that $P(x) \neq 0$ then $x$ is $\theta$-semistable; the opposite implication is also true but non-trivial, and is the content of the so-called Fundamental Theorem of GIT (see Theorem 1.1 in Chapter 1, Section 2 of \cite{MumfordFogartyKirwan} or Section 3.4.1 of \cite{Wallach}). Thus we have the equivalent definition of semistability: $x \in \mathscr{C}$ is $\theta$-semistable if and only if there exists a $\theta$-invariant polynomial $P$ on $\mathscr{C}$ such that $P(0)=0$ but $P(x) \neq 0$. 
\begin{remark}
Effectively, we could have simply defined semistability in terms of non-vanishing invariant polynomials. However, in the next section we will need to use tools from GIT that are better phrased in terms of orbits, and therefore decided to provide here the more standard definition of semistability.
\end{remark}
Since $\operatorname{Sym}^2(\mathbb{R}^d) \times \operatorname{Sym}^2(\mathbb{R}^d)$ is a cone and $SL(\mathbb{R}^d) \times SL(\mathbb{R}^2)$ is a linearly reductive group, we can rephrase Lemma \ref{lemma:GIT_equivalence} informally as 
\begin{quote}
\emph{$d\nu_{\Sigma}/d\xi$ is non-zero if and only if $(A,B)$ is $(\rho\times\sigma)$-semistable.}
\end{quote}
\section{Characterisation of well-curvedness}\label{section:semistability}
In this section we will provide the following algebraic characterisation of the semistability of a pair of symmetric matrices $(A,B)$ under the $\rho\times \sigma$ action introduced in the previous section. 
\begin{proposition}\label{prop::semistability_SL2xSLd_det}
Let $A,B \in \operatorname{Sym}^2(\mathbb{R}^d)$. The pair $(A,B)$ is $(\rho\times\sigma)$-semistable if and only if the homogeneous polynomial $s,t \mapsto \det(sA + tB)$ does not vanish identically and has no root of multiplicity $>d/2$.
\end{proposition}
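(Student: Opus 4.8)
The plan is to establish both directions using the Hilbert--Mumford numerical criterion, which reduces semistability to the study of one-parameter subgroups (1-PSGs) of $G = SL(\mathbb{R}^d) \times SL(\mathbb{R}^2)$ acting on the pair $(A,B)$. Recall that $(A,B)$ is $(\rho\times\sigma)$-unstable precisely when there is a 1-PSG $\lambda(u) = (\lambda_1(u), \lambda_2(u))$ such that $\lim_{u\to 0}(\rho\times\sigma)_{\lambda(u)}(A,B) = (0,0)$. After simultaneously diagonalising the generators of the 1-PSGs, $\lambda_1(u)$ acts on $\mathbb{R}^d$ with weights $r_1 \geq \cdots \geq r_d$ summing to zero, and $\lambda_2(u)$ acts on the pencil $sA+tB$ by scaling $A$ and $B$ with weights $m, -m$ (up to a change of basis in the $2$-plane, i.e. we may first apply a $\sigma$-transformation). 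The entry $(sA+tB)_{ij}$ then gets multiplied by $u^{r_i + r_j}$ times the $\lambda_2$-weight; the orbit limit is $0$ iff every surviving monomial has positive total weight.

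First I would handle the forward direction (semistable $\Rightarrow$ condition \textbf{M}). Suppose the determinant vanishes identically; I claim $(A,B)$ is unstable. Using the $\sigma$-action we may assume $A$ is singular (some nontrivial combination of $A,B$ is singular because $\det(sA+tB)\equiv 0$ forces, generically, a common structure — more carefully, one produces a common isotropic-type subspace on which the pencil degenerates), hence after a $\rho$-conjugation $A$ and $B$ share a block structure allowing a destabilising 1-PSG that shrinks the relevant coordinates. Next, suppose the determinant is not identically zero but has a root $\ell_0 = as+bt$ of multiplicity $m_\ast > d/2$. Apply a $\sigma$-transformation sending $\ell_0$ to the coordinate $s$, so now $\det(sA+tB)$ is divisible by $s^{m_\ast}$; this means $\det(tB) $ — i.e. $t^d \det B$ — is the leading term, but more usefully it forces $A$ to have rank issues: expanding $\det(sA+tB)$ near $s=0$, the coefficient of $s^k t^{d-k}$ vanishes for $k < m_\ast$, which by the generalised Laplace/mixed-discriminant expansion forces, after a $\rho$-conjugation diagonalising $B$, a large block of $A$ to vanish against the corresponding block of $B$. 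Concretely one shows there is a subspace $W$ of dimension $> d/2$ and a complementary subspace $W'$ of dimension $< d/2$ such that $A$ maps $W$ into $W'{}^{\perp}$-type position — the cleanest way is: the multiplicity-$m_\ast$ vanishing is equivalent to $\dim\ker(\text{generic }sA+tB|_{\text{something}}) $ being large, and then the 1-PSG $\lambda_1(u) = \mathrm{diag}(u^{a},\ldots,u^{a},u^{-b},\ldots)$ on $W \oplus W'$ with $a,b>0$ chosen so $a\dim W = b\dim W'$, combined with the $\lambda_2$ scaling $s\mapsto u^{c}s$, $t \mapsto u^{-c}t$ for suitable $c$, kills both $A$ and $B$ in the limit because every entry picks up a strictly positive weight. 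The bookkeeping that $m_\ast > d/2$ is exactly what makes the weights balanceable is the crux.

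For the converse (condition \textbf{M} $\Rightarrow$ semistable), I would argue contrapositively: assume $(A,B)$ is unstable and produce either identical vanishing of $\det(sA+tB)$ or a root of multiplicity $>d/2$. Take a destabilising 1-PSG with weights $r_1 \geq \cdots \geq r_d$ (on $\mathbb{R}^d$) and $\pm m$ on the pencil after the appropriate $\sigma$-normalisation (if $m=0$ the $\lambda_2$ part is trivial). The limit being $(0,0)$ means: for every pair $(i,j)$, either $A_{ij}=0$ or $r_i+r_j+m>0$, and either $B_{ij}=0$ or $r_i+r_j-m>0$. Set $\ell_0 = s$ in the normalised coordinates. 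I claim $s^{\lceil d/2\rceil + ?}$ — more precisely, I compute the order of vanishing of $\det(sA+tB)$ at $s=0$ by tracking, via the weight inequalities, how many of the diagonal-ish contributions to the determinant are forced to carry a factor of $s$: grouping indices by sign of $r_i$, the inequality $r_i + r_j + m > 0$ failing forces $A_{ij}=0$, and a counting argument (pigeonhole on the index set split by $r_i \lessgtr 0$, using $\sum r_i = 0$) shows that the block where $A$ can be nonzero and $B$ must carry $s$-factors has size $> d/2$, yielding multiplicity $> d/2$ of the root $s$; degenerate sub-cases (e.g. when a whole coordinate subspace is killed) give identical vanishing instead. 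The main obstacle throughout is the careful combinatorial accounting: translating the system of weight inequalities into a lower bound on the root multiplicity, and conversely realising a given multiplicity-$m_\ast$ factorisation by an explicitly weighted 1-PSG, handling the boundary case $m_\ast = d/2$ (which is allowed, hence must \emph{not} be destabilising) with the correct strict-versus-nonstrict inequalities. I expect this weight/multiplicity dictionary to be where essentially all the work lies; the GIT machinery (Hilbert--Mumford, linear reductivity) is used as a black box.
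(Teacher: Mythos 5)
Your overall framework (Hilbert--Mumford in both directions) is sound, and the direction ``unstable $\Rightarrow$ $\det(sA+tB)$ vanishes identically or has a root of multiplicity $>d/2$'' can indeed be completed by your weight bookkeeping: after diagonalising the one-parameter subgroup, every permutation term contributing to the coefficient of $s^kt^{d-k}$ carries total weight $(2k-d)m$ because $\sum_i r_i=0$, which forces the low-order coefficients to vanish (and forces identical vanishing when $m=0$). In fact this direction is even easier than your sketch suggests: since $\det(sA+tB)$ is $\rho$-invariant and $\sigma$-equivariant, any destabilising subgroup for $(A,B)$ destabilises the binary form itself, and the classical $SL(2)$ criterion for binary forms finishes the argument -- this is exactly how the paper disposes of it in a few lines.

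The genuine gap is in the other direction, ``condition \eqref{condition:well-curvedness} fails $\Rightarrow$ unstable'', which is where all the real work lies. Your key step -- that vanishing of the coefficients of $s^kt^{d-k}$ for $k<m_\ast$ ``forces, after a $\rho$-conjugation diagonalising $B$, a large block of $A$ to vanish against the corresponding block of $B$'' -- is unjustified and false as stated: the root multiplicity of $\det(sA+tB)$ is the \emph{algebraic} multiplicity of an eigenvalue of the pencil, not the dimension of any kernel or eigenspace, and congruence does not remove Jordan structure. For instance, with $A=\begin{pmatrix}1&0\\0&0\end{pmatrix}$, $B=\begin{pmatrix}0&1\\1&0\end{pmatrix}$ one has $\det(sA+tB)=-t^2$, a root of multiplicity $2>d/2$, yet the degenerate member has only a one-dimensional kernel; your placeholder ``$\dim\ker(\text{generic }sA+tB|_{\text{something}})$ being large'' is precisely the unresolved point. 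What is needed (and what the paper supplies) is a simultaneous normal form for the symmetric pair: put $AB^{-1}$ in Jordan form, write each Jordan block as a product of two symmetric matrices, and prove via a matrix square-root lemma that $(A,B)$ lies in the same orbit as the pair $(\widetilde{\bm{J}},\widetilde{\bm{I}})$; only then can one write down a destabilising subgroup, and even then it is built in two stages -- first a subgroup fixing $\widetilde{\bm{I}}$ that kills the nilpotent parts of the blocks carrying the heavy eigenvalue, then a combined $SL(d)\times SL(2)$ subgroup whose weights balance exactly because $m_\ast>d/2$. Likewise your treatment of the identically vanishing case (``one produces a common isotropic-type subspace'') asserts rather than constructs the needed structure: the paper must analyse the span $V$ of the kernels of generic pencil members, show all generic members share a common image $H$ orthogonal to $V$ with $\dim H<\dim V$ and $\dim H\le d-\dim V$, and only this block form admits explicit destabilising weights; note also that mere singularity of one member proves nothing, since $A=\mathrm{diag}(1,0)$, $B=\mathrm{diag}(0,1)$ is semistable. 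As written, the proposal states the conclusions of these structural steps without proving them, so the hard implication remains open.
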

Together with Lemma \ref{lemma:GIT_equivalence}, this proposition immediately implies Theorem \ref{thm:well_curvedness_characterisation}, as the root condition above is precisely condition \eqref{condition:well-curvedness} when $(A,B) = (\nabla^2 Q_1, \nabla^2 Q_2)$. The rest of the section is dedicated to the proof of the proposition, which is articulated in three subsections.
\subsection{Preliminaries} 
In the proof of Proposition \ref{prop::semistability_SL2xSLd_det} we will make use of a fundamental GIT result -- the so-called Hilbert-Mumford criterion, which provides a characterisation of semistable/unstable points. The classical Hilbert-Mumford criterion (like much of GIT) is formulated over the complex numbers: this means that below $\mathscr{C}$ is an affine variety in some $\mathbb{C}^n$ and $G$ is an algebraic subgroup\footnote{An algebraic subgroup of $GL(\mathbb{C}^n)$ is a subgroup that is also a subvariety of $GL(\mathbb{C}^n)$.} of $GL(\mathbb{C}^n)$.
\begin{lemma}[Hilbert-Mumford criterion]\label{lemma:Hilbert_Mumford_criterion}
Let $\mathscr{C}$ be a cone and let $\theta : G \times \mathscr{C} \to \mathscr{C}$ be the action of a linearly reductive group $G$, which we assume commutes with dilations. If $x \in \mathscr{C}$ is $\theta$-unstable, then there exists a one-parameter subgroup of $G$ given by an algebraic homomorphism $\eta : \mathbb{C}^{\times} \to G$ such that 
\[ \lim_{\lambda \to 0} \theta_{\eta(\lambda)}(x) = 0, \]
where the limit is taken in the standard topology of $\mathscr{C}$ (the one inherited from the standard topology of $\mathbb{C}^n$).
\end{lemma}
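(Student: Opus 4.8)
The plan is to prove only the stated (forward) implication, the converse being immediate from the definition of instability. So suppose $x \in \mathscr{C}$ is $\theta$-unstable, i.e.\ $0 \in \operatorname{Cl_{Zar}}(G\cdot x)$; I want to produce a one-parameter subgroup degenerating $x$ to $0$. First I would reduce to the case of connected $G$: since $G\cdot x$ is a finite union of $G^\circ$-orbits, $0$ lies in the Zariski closure of some $G^\circ\cdot(g_i x)$, and a destabilising cocharacter for $g_i x$ relative to $G^\circ$ conjugates to one for $x$ relative to $G$. The main argument then follows the classical three-step template: (i) replace the orbit-closure statement by a single algebraic curve in $G$ degenerating $x$ to $0$; (ii) ``straighten'' this curve using the Cartan--Iwahori decomposition over a Laurent-series field; (iii) read off the cocharacter by bookkeeping of torus weights.

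For step (i), since $G\cdot x$ is a constructible subset of the affine variety $\mathscr{C}$ with $0$ in its closure, a standard curve-selection/valuative argument yields a discrete valuation ring $R$ with residue field $\mathbb{C}$ and fraction field $K$, together with a morphism $\operatorname{Spec}R \to \mathscr{C}$ whose generic point lands in $G\cdot x$ and whose closed point maps to $0$. Because the orbit map $g\mapsto g\cdot x$ is surjective onto $G\cdot x$, after passing to a finite extension of $K$ (and localising the integral closure of $R$ to get a DVR, still with residue field $\mathbb{C}$) I can lift the generic point to $\gamma\in G(K)$ with $c\mapsto \gamma(c)\cdot x$ extending to the chosen $\operatorname{Spec}R\to\mathscr{C}$. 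Completing, I may take $R=\mathbb{C}[[t]]$, $K=\mathbb{C}((t))$, so that $\gamma\in G(\mathbb{C}((t)))$ and $\gamma(t)\cdot x\in\mathscr{C}(\mathbb{C}[[t]])$ with $(\gamma(t)\cdot x)|_{t=0}=0$.

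For step (ii), fix a maximal (split) torus $T\subseteq G$. The Cartan--Iwahori decomposition for the reductive group $G$ over the complete discretely valued field $\mathbb{C}((t))$ provides a factorisation $\gamma(t)=g_1\,\eta(t)\,g_2$ with $g_1,g_2\in G(\mathbb{C}[[t]])$ and $\eta\in X_\ast(T)$ a genuine cocharacter of $G$ defined over $\mathbb{C}$, where $\eta(t)$ is its value at the uniformiser. (For $G=GL_n$ this is Smith normal form over $\mathbb{C}[[t]]$; in general it is the Cartan decomposition coming from Bruhat--Tits theory.) Since $g_1^{\pm1}\in G(\mathbb{C}[[t]])$ act on $\mathscr{C}(\mathbb{C}[[t]])$ compatibly with reduction mod $t$, the point $v(t):=\eta(t)\cdot(g_2\cdot x)=g_1^{-1}\cdot(\gamma(t)\cdot x)$ still lies in $\mathscr{C}(\mathbb{C}[[t]])$ and satisfies $v(t)|_{t=0}=0$. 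For step (iii), decompose the ambient space into $T$-weight spaces, $\mathbb{C}^n=\bigoplus_w V_w$, and write $g_2\cdot x=\sum_w p_w(t)$ with $p_w(t)\in V_w\otimes\mathbb{C}[[t]]$, so that $v(t)=\sum_w t^{\langle w,\eta\rangle}p_w(t)$. The vanishing at $t=0$ forces $\langle w,\eta\rangle+\operatorname{ord}_t p_w(t)\geq 1$ whenever $p_w\neq0$; and since $p_w(0)$ is exactly the weight-$w$ component of $\bar g_2\cdot x$ for $\bar g_2:=g_2(0)\in G(\mathbb{C})$, every weight $w$ actually occurring in $\bar g_2\cdot x$ has $\operatorname{ord}_t p_w=0$ and hence $\langle w,\eta\rangle\geq1>0$. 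Thus $\lim_{\lambda\to0}\eta(\lambda)\cdot(\bar g_2\cdot x)=0$, and the one-parameter subgroup $\eta':\lambda\mapsto\bar g_2^{-1}\,\eta(\lambda)\,\bar g_2$ of $G$ satisfies $\lim_{\lambda\to0}\theta_{\eta'(\lambda)}(x)=0$, which is what is wanted.

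The hard part will be step (ii): the Cartan--Iwahori decomposition of a reductive group over $\mathbb{C}((t))$ is the one genuinely structural input, and it is what lets one replace an arbitrary $K$-point of $G$ by a torus cocharacter up to ``bounded'' (integral) left/right factors. Steps (i) and (iii) are by contrast routine --- the former is the valuative criterion plus surjectivity of orbit maps, the latter is elementary bookkeeping. If one wanted to avoid invoking the general Cartan decomposition, an alternative would be to embed $G\hookrightarrow GL_N$ faithfully, run the whole argument there (where only Smith normal form is needed), and then descend the resulting $GL_N$-cocharacter to a cocharacter of $G$ --- but this descent step is itself not completely formal and would need its own argument.
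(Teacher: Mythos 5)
This lemma is not proved in the paper at all: it is stated as a classical black box, with the surrounding text deferring to the standard GIT literature (\cite{MumfordFogartyKirwan}, \cite{Wallach}) and to Birkes \cite{Birkes} for the real form, so there is no in-paper argument to compare yours against. What you have written is, correctly, the canonical Iwahori--Mumford proof of the criterion: (i) the valuative/curve-selection step produces $\gamma \in G(\mathbb{C}((t)))$ (after a ramified base change $t = u^m$ needed to lift a point of the orbit to a point of the group) with $\gamma\cdot x \in \mathscr{C}(\mathbb{C}[[t]])$ specialising to $0$; (ii) the Cartan--Iwahori decomposition writes $\gamma = g_1\,\eta(t)\,g_2$ with $g_1,g_2$ integral and $\eta$ a cocharacter of a maximal torus; (iii) the weight bookkeeping forces $\langle w,\eta\rangle \geq 1$ on every weight occurring in $\bar g_2\cdot x$, and conjugating $\eta$ by $\bar g_2$ finishes. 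The logic of each step is sound, including the reduction to connected $G$ and the observation that $g_1$ may be discarded because it is integral and hence acts compatibly with reduction mod $t$. Two small points deserve explicit treatment in a full write-up, though neither is a gap: the weight decomposition $\mathbb{C}^n = \bigoplus_w V_w$ tacitly assumes the torus acts linearly on the ambient space (harmless here, since an algebraic action commuting with dilations has coordinate functions homogeneous of degree one, hence linear, but it should be said); and the lift of the generic point from the orbit to $G$ over a finite extension of $\mathbb{C}((t))$ is exactly where the ramified cover enters and should be spelled out. Your closing assessment is also accurate: the Iwahori decomposition is the one genuinely structural input, and the $GL_N$-embedding alternative trades it for a descent problem that is not formal.
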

The real version of the Hilbert-Mumford criterion is due to Birkes \cite{Birkes}: its statement is exactly the same, but $\mathbb{C}$ is replaced everywhere by $\mathbb{R}$. An easy consequence of the real Hilbert-Mumford criterion is that $x \in \mathscr{C}$ is $\theta$-semistable if and only if it is semistable for the complexification of $\theta$ (which entails complexifying $\mathscr{C}, G$ as well). Indeed, if $x$ is $\theta$-unstable then by the real Hilbert-Mumford criterion $0$ is in the standard closure of the orbit of $x$, and therefore $0$ is also in the Zariski closure of the orbit under the complexified action; viceversa, if $x$ is $\theta$-semistable then for some $\theta$-invariant polynomial $P$ such that $P(0)=0$ we have $P(x)\neq 0$, but $P$ is also invariant with respect to the complexified action.\par
For us the above means that a pair of real symmetric matrices $(A,B)$ is semistable under the action $\rho \times \sigma$ of $SL(\mathbb{R}^d) \times SL(\mathbb{R}^2)$ if and only if it is semistable under the same action of group $SL(\mathbb{C}^d) \times SL(\mathbb{C}^2)$ instead. This will afford us some convenient technical simplifications later on, but is by no means necessary.
\begin{remark}
Lemma \ref{lemma:Gressman_GIT_lemma} is a direct consequence of the real Hilbert-Mumford criterion.
\end{remark}
Let us denote
\[ \Delta_{A,B}(s,t) := \det(sA+tB) \]
for convenience; thus $\Delta$ can be regarded as a map $\operatorname{Sym}^2(\mathbb{C}^d) \times \operatorname{Sym}^2(\mathbb{C}^d) \to \mathbb{C}[s,t]$. Some observations about the symmetries enjoyed by this map are in order. The first observation is that $\Delta$ is invariant under the action $\rho$: indeed,
\[ \det(s MAM^\top + t MBM^\top) = \det(M(sA+tB)M^\top)=\det(sA+tB); \]
therefore
\[ \Delta_{\rho_M(A,B)} = \Delta_{A,B}. \]
The second observation is that $\Delta$ is not invariant under the action $\sigma$, but it is nevertheless equivariant: indeed,
\[ \det(s(\lambda A + \mu B) + t(\lambda'A + \mu' B)) = \det((\lambda s + \lambda' t)A + (\mu s + \mu' t)B), \]
so if we let $\widetilde{\sigma}$ denote the action on polynomials of two variables defined by 
\[ \widetilde{\sigma}_{N}P\begin{pmatrix} s \\ t \end{pmatrix} := P\left(N^\top \begin{pmatrix} s \\ t \end{pmatrix}\right)  \]
for any $N \in SL(\mathbb{C}^2)$, we have
\[ \Delta_{\sigma_N (A,B)} = \widetilde{\sigma}_{N} (\Delta_{A,B}). \]\par
We are of course only interested in the action of $\widetilde{\sigma}$ on homogeneous polynomials of two variables and degree $d$. It will be very useful to identify which polynomials are semistable under this action; we can do so very easily with the Hilbert-Mumford criterion. By Lemma \ref{lemma:Hilbert_Mumford_criterion}, $P \in \mathbb{C}[s,t]$ (homogeneous of degree $d$) will be $\widetilde{\sigma}$-unstable if and only if there exists a one-parameter subgroup $(N_\lambda)_{\lambda\in\mathbb{C}^{\times}}$ of $SL(\mathbb{C}^2)$ such that
\[ \lim_{\lambda \to 0} \widetilde{\sigma}_{N_\lambda}P =0, \]
where the limit is taken in the standard vector space topology of $\mathbb{C}[s,t]$. The one-parameter (algebraic) subgroups of the special linear groups $SL(\mathbb{C}^n)$ are well-known: they are all of the form
\[ N_\lambda = G \begin{pmatrix} \lambda^{a_1} & & \\
 & \ddots & \\
 & & \lambda^{a_n}
\end{pmatrix} G^{-1}, \]
where $G \in SL(\mathbb{C}^n)$ and the exponents $a_j$ are integers that satisfy $\sum_{j=1}^{n} a_j = 0$ (but are otherwise unconstrained). In our case $n=2$, so the one-parameter subgroups are simply conjugates of $\begin{pmatrix} \lambda & \\ & \lambda^{-1}\end{pmatrix}$, and therefore if we let $\begin{pmatrix}
\hat{s} \\ \hat{t}
\end{pmatrix} = G^{-1} \begin{pmatrix}
s \\ t
\end{pmatrix}$ we can write
\[ \widetilde{\sigma}_{N_\lambda} P = \sum_{k=0}^{d} c_k \lambda^{2k-d} \hat{s}^{k} \hat{t}^{d-k}, \]
where the $c_k$ are the coefficients of $P \circ G$. This expression can only tend to zero as $\lambda \to 0$ if the coefficients $c_k$ vanish for all $k \leq d/2$; but this means in particular that $\hat{s}^m$ divides $P \circ G$ for some $m>d/2$, or in other words that $P$ has a root of multiplicity $>d/2$. The argument can be run in reverse, and therefore we have shown the following known fact.
\begin{lemma}\label{lemma:semistability_SL2}
Let $P$ be a homogeneous polynomial of degree $d$ in $\mathbb{C}[s,t]$. Then $P$ is $\widetilde{\sigma}$-semistable if and only if $P$ has no root of multiplicity $>d/2$.
\end{lemma}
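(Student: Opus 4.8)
The plan is to derive Lemma~\ref{lemma:semistability_SL2} directly from the Hilbert--Mumford criterion (Lemma~\ref{lemma:Hilbert_Mumford_criterion}, in the real form of Birkes, or equivalently over $\mathbb{C}$ after the complexification reduction discussed above) together with the explicit description of the one-parameter subgroups of $SL(\mathbb{C}^2)$. By Hilbert--Mumford, a nonzero homogeneous $P$ of degree $d$ fails to be $\widetilde{\sigma}$-semistable exactly when there is an algebraic homomorphism $\eta : \mathbb{C}^\times \to SL(\mathbb{C}^2)$ with $\lim_{\lambda \to 0} \widetilde{\sigma}_{\eta(\lambda)} P = 0$ in the vector space of degree-$d$ forms; so the entire statement reduces to deciding when such a destabilising one-parameter subgroup exists.

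First I would normalise the one-parameter subgroup. Every one-parameter subgroup of $SL(\mathbb{C}^2)$ has the form $N_\lambda = G\,\mathrm{diag}(\lambda^a,\lambda^{-a})\,G^{-1}$ with $G \in SL(\mathbb{C}^2)$ and $a \in \mathbb{Z}$; the case $a = 0$ is trivial, and replacing $a$ by $1$ only rescales the exponents (swapping the two coordinates if $a < 0$, which is absorbed into $G$), so it suffices to treat $N_\lambda = G\,\mathrm{diag}(\lambda,\lambda^{-1})\,G^{-1}$. Setting $(\hat s,\hat t)^\top := G^{-1}(s,t)^\top$ and expanding $P \circ G = \sum_{k=0}^{d} c_k \hat s^{\,k} \hat t^{\,d-k}$ in these coordinates, the action unwinds to
\[ \widetilde{\sigma}_{N_\lambda} P = \sum_{k=0}^{d} c_k \, \lambda^{2k-d} \, \hat s^{\,k} \hat t^{\,d-k}. \]
Since the monomials $\hat s^{\,k} \hat t^{\,d-k}$ are linearly independent, this tends to $0$ as $\lambda \to 0$ if and only if $c_k = 0$ for every $k$ with $2k - d \le 0$, i.e.\ for every $k \le d/2$; equivalently, $\hat s^{\,m}$ divides $P \circ G$ for some integer $m > d/2$.

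Finally I would translate this condition back to a property of $P$ itself. Because $G$ is invertible, factoring into linear forms over $\mathbb{C}$ shows that, for any fixed $G$, the form $P$ has a root of multiplicity $> d/2$ if and only if $P \circ G$ does; and conversely any prescribed linear form can be sent to $\hat s$ by a suitable $G \in SL(\mathbb{C}^2)$. Hence a destabilising one-parameter subgroup exists if and only if $P$ admits a root of multiplicity $> d/2$, and reading this through the Hilbert--Mumford equivalence yields the lemma. The computation is essentially bookkeeping once Hilbert--Mumford and the $SL_2$ classification are available; the only point needing a little care is the reduction to the diagonal subgroup $\mathrm{diag}(\lambda,\lambda^{-1})$ up to conjugation, together with the observation that conjugating by an arbitrary $G$ is precisely what lets an arbitrary linear form play the role of the destabilised variable $\hat s$.
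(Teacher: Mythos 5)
Your proposal is correct and follows essentially the same route as the paper: the Hilbert--Mumford criterion, the classification of one-parameter subgroups of $SL(\mathbb{C}^2)$ as conjugates of $\operatorname{diag}(\lambda,\lambda^{-1})$, the expansion $\widetilde{\sigma}_{N_\lambda}P=\sum_k c_k\lambda^{2k-d}\hat{s}^k\hat{t}^{d-k}$, and the identification of the vanishing of $c_k$ for $k\le d/2$ with a root of multiplicity $>d/2$. The only differences are that you spell out the normalisation of the exponent $a$ and the reverse implication (which the paper dispatches with ``the argument can be run in reverse''), which is just added detail, not a different argument.
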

In light of this lemma, we could rephrase Proposition \ref{prop::semistability_SL2xSLd_det} as 
\begin{quote}
\emph{$(A,B)$ is $(\rho\times\sigma)$-semistable if and only if $\Delta_{A,B}$ is $\widetilde{\sigma}$-semistable.}
\end{quote}\par
Now we are ready to begin the proof of Proposition \ref{prop::semistability_SL2xSLd_det}. One implication is easy: suppose that $(A,B)$ is $(\rho\times\sigma)$-unstable, and therefore by Lemma \ref{lemma:Hilbert_Mumford_criterion} there exists a one-parameter subgroup $\big((M_\lambda, N_\lambda)\big)_{\lambda \in \mathbb{C}^{\times}} \subset SL(\mathbb{C}^d) \times SL(\mathbb{C}^2)$ such that 
\[ \lim_{\lambda \to 0} \rho_{M_\lambda} \sigma_{N_{\lambda}} (A,B) = (0,0). \]
By the invariance of $\Delta$ under $\rho$ and equivariance under $\sigma$, we have then that 
\[ \lim_{\lambda \to 0} \widetilde{\sigma}_{N_\lambda} \Delta_{A,B} = 0, \]
that is, the polynomial $\Delta_{A,B}$ is $\widetilde{\sigma}$-unstable. By Lemma \ref{lemma:semistability_SL2} we have then that $\Delta_{A,B}$ has a root of multiplicity larger than $d/2$, thus proving one side of the equivalence.\par
It remains to prove the opposite implication: we will assume in the rest of the section that $\Delta_{A,B}$ has a root of multiplicity strictly larger than $d/2$, and show that this makes $(A,B)$ unstable. There is a relevant dichotomy here: either $\Delta_{A,B}$ is a non-vanishing polynomial in $s,t$ or it is identically zero. We treat each case on its own.
\subsection{Case I: $\Delta_{A,B}$ is not identically vanishing}
Since the determinant is non-vanishing, for some $(s_0,t_0)$ we have that $s_0 A + t_0 B$ is invertible. We may assume without loss of generality that $(s_0,t_0)=(0,1)$, or in other words that $\det B \neq 0$. Indeed, observe that if $s_0 \neq 0$ we can let 
\[ N_0 := \begin{pmatrix}
0 & -1/s_0 \\
s_0 & t_0
\end{pmatrix} \in SL_2(\mathbb{C}) \]
and we have 
\[ \sigma_{N_0}(A,B) = (-1/s_0 \, B, s_0 A + t_0 B); \]
$(A,B)$ is $(\rho\times\sigma)$-unstable if and only if the pair $(-1/s_0 \, B, s_0 A + t_0 B)$ is, and therefore it is just a matter of relabelling $A':= -1/s_0 \, B$, $B' := s_0 A + t_0 B$ in the arguments below.\par 
We can thus assume $\det B \neq 0$ and write 
\[ \det(sA + tB) = \det(B)\det(sAB^{-1} + t I). \]
We put $AB^{-1}$ in Jordan normal form: for any $r,\lambda$ denote by $J_r(\lambda)$ the $r\times r$ Jordan block of eigenvalue $\lambda$, that is
\[ J_r(\lambda) := \begin{pmatrix}
\lambda & 1 & & & \\
 & \lambda & 1 & & \\
 & & \ddots & \ddots & \\
 & & & \lambda & 1 \\
 & & & & \lambda \\
\end{pmatrix} \]
(if $r=1$ we have simply $J_1(\lambda) = \begin{pmatrix}\lambda \end{pmatrix}$); then there exists a matrix $Q \in GL(\mathbb{C}^d)$ such that $AB^{-1} = Q \bm{J} Q^{-1}$, where 
\[ \bm{J} = \begin{pmatrix}
\boxed{J_{r_1}(\lambda_1)} & & \\
 & \ddots & \\
  & & \boxed{J_{r_\ell}(\lambda_\ell)}
\end{pmatrix} \]
for some $r_j$ and $\lambda_j$. We have 
\[ \det(sAB^{-1} + t I) = \det(sQ \bm{J} Q^{-1} + t I) = \det(s \bm{J} + t I), \]
so that matters are reduced to the Jordan normal form of $AB^{-1}$. With $I_{r}$ denoting the $r\times r$ identity matrix, we have
\[ s \bm{J} + t I = \begin{pmatrix}
\boxed{s J_{r_1}(\lambda_1) + t I_{r_1}} & & \\
 & \ddots & \\
 & & \boxed{s J_{r_\ell}(\lambda_\ell) + t I_{r_\ell}} 
\end{pmatrix}, \]
where in particular 
\[ s J_{r_j}(\lambda_j) + t I_{r_j} = \begin{pmatrix}
s\lambda_j + t & s & & \\
 & \ddots & \ddots & \\
 & & s \lambda_j + t & s \\
 & & & s \lambda_j + t \\
\end{pmatrix}. \]
We then see that the above has produced the factorisation
\[ \det(sA + t B) = \det(B) \prod_{j=1}^{\ell} (s \lambda_j + t)^{r_j}; \]
we caution the reader that the $\lambda_j$ are not necessarily distinct and therefore the $r_j$ are not exactly the multiplicities. If we want to highlight the correct multiplicities, we let $\lambda^{\ast}_1, \ldots, \lambda^{\ast}_n$ be all the distinct values the $\lambda_j$ take and we write 
\[ \det(sA + tB) = \det(B) \prod_{j=1}^{n} (s\lambda^{\ast}_j + t)^{m_j}, \]
where 
\[ m_j = \sum_{k : \, \lambda_k = \lambda^{\ast}_j} r_k. \]
One of the $m_j$ is larger than $d/2$ by assumption -- let it be $m_1$ for convenience. Then we have deduced that $\bm{J}$, the Jordan form of $AB^{-1}$, has an eigenvalue that is repeated more than $d/2$ times. We will now see how to connect this fact to the original pair $(A,B)$ of symmetric matrices.\par 
Observe that every block $J_r(\lambda)$ can be written as the product of two symmetric matrices: indeed, if we let 
\begin{equation}\label{eq:antisymmetric_Jordan_blocks}
\widetilde{J}_r(\lambda):= \begin{pmatrix}
 & & & 1 & \lambda \\
 & & 1 & \lambda  & \\
 & \text{\reflectbox{$\ddots$}} & \text{\reflectbox{$\ddots$}} & & \\
 1 & \lambda & & &  \\
\lambda  & & & &  \\
\end{pmatrix}, \qquad 
\widetilde{I}_r := \begin{pmatrix}
 & & & & 1 \\
 & & & 1  & \\
 & & \text{\reflectbox{$\ddots$}} & & \\
 & 1 & & &  \\
1 & & & &  \\
\end{pmatrix}, 
\end{equation}
then it is immediate to verify that 
\[ J_r(\lambda) = \widetilde{J}_r(\lambda) \widetilde{I}_r. \]
We can therefore factorise 
\[ \bm{J} = \widetilde{\bm{J}} \widetilde{\bm{I}}, \]
where 
\begin{equation}\label{eq:special_form_A_B}
\widetilde{\bm{J}} = \begin{pmatrix}
\boxed{\widetilde{J}_{r_1}(\lambda_1)} & & \\
 & \ddots & \\
  & & \boxed{\widetilde{J}_{r_\ell}(\lambda_\ell)}
\end{pmatrix}, \qquad
\widetilde{\bm{I}} = \begin{pmatrix}
\boxed{\widetilde{I}_{r_1}} & & \\
 & \ddots & \\
  & & \boxed{\widetilde{I}_{r_\ell}}
\end{pmatrix}.  
\end{equation}
We claim that $(A,B)$ and $(\widetilde{\bm{J}}, \widetilde{\bm{I}})$ belong to the same $(\rho\times\sigma)$-orbit, and therefore they are either both unstable or both semistable. Indeed, since $B$ is invertible we can write 
\[ (A,B) = (AB^{-1} B, B) = (Q \bm{J} Q^{-1} B, B) = (Q \widetilde{\bm{J}} \widetilde{\bm{I}} Q^{-1} B, B); \]
since $B$ is also symmetric, acting with $\rho_{\mu B^{-1}}$ (where $\mu$ is such that $\det (\mu B^{-1}) =1$) we have that the orbit of $(A,B)$ contains
\[ \mu^2 \ (B^{-1}Q \widetilde{\bm{J}} \widetilde{\bm{I}} Q^{-1}, B^{-1}). \]
Acting with $\rho_{\mu' Q^{\top}}$ (where $\mu'$ is such that $\det (\mu' Q^{\top}) =1$) we see that 
\[ \mu^2 {\mu'}^2 \ (Q^{\top}B^{-1}Q \widetilde{\bm{J}} \widetilde{\bm{I}}, Q^{\top} B^{-1} Q) \]
is also in the orbit of $(A,B)$; moreover, since $\widetilde{\bm{I}}$ is symmetric and its own inverse, we have in the orbit of $(A,B)$ also the element 
\[ \mu^2 {\mu'}^2 {\mu''}^2 \ (\widetilde{\bm{I}} (Q^{\top}B^{-1}Q) \widetilde{\bm{J}}, \widetilde{\bm{I}} (Q^{\top}B^{-1}Q) \widetilde{\bm{I}}) \]
(where $\mu''$ is such that $\det(\mu'' \widetilde{\bm{I}})=1$). Letting $N := \mu {\mu'}^2 \mu'' \ \widetilde{\bm{I}} (Q^{\top}B^{-1}Q) \in SL(\mathbb{C}^d)$, we see that the last element is simply $\mu \mu''\ (N \widetilde{\bm{J}}, N \widetilde{\bm{I}})$ (notice that $N\widetilde{\bm{J}}$ and $N \widetilde{\bm{I}}$ are both symmetric). We will show that there exists a matrix $M \in SL(\mathbb{C}^d)$ such that $\rho_{M}(N \widetilde{\bm{J}}, N \widetilde{\bm{I}}) = (\widetilde{\bm{J}},\widetilde{\bm{I}})$, and this will prove the claim at hand. This fact is an immediate consequence of the following lemma.
\begin{lemma}
Let $(A_1,A_2)$ be a pair of symmetric $d\times d$ matrices, of which at least one is invertible, and assume that $N \in SL(\mathbb{C}^d)$ is such that $(NA_1, NA_2)$ is also a pair of symmetric matrices. Then there exists $M \in SL(\mathbb{C}^d)$ such that 
\[  (NA_1,NA_2) = (MA_1 M^\top, MA_2 M^\top). \]
\end{lemma}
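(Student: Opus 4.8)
The plan is to build $M$ from a polynomial square root of $N$. The key initial observation is that the symmetry of $NA_1$ and of $NA_2$, combined with the symmetry of $A_1,A_2$ themselves, is equivalent to the intertwining relations
\[ NA_i = A_i N^\top, \qquad i=1,2 . \]
Iterating gives $N^kA_i = A_i(N^\top)^k$ for all $k\ge 0$, hence $q(N)A_i = A_i\,q(N^\top)$ for every polynomial $q\in\mathbb{C}[x]$. Since $N$ is invertible, $0\notin\operatorname{spec}(N)$, so there is a polynomial $p$ with $p(N)^2=N$ (take $p$ to be a Hermite interpolant, at the eigenvalues of $N$, of a holomorphic branch of the square root defined near $\operatorname{spec}(N)$; note $p(x)^2-x$ is then divisible by the minimal polynomial of $N$, which equals that of $N^\top$, so also $p(N^\top)^2=N^\top$). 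Put $M_0:=p(N)$.

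The point is that $M_0$ already does the job up to a determinant: since $M_0^\top=p(N)^\top=p(N^\top)$, the intertwining relation with $q=p$ yields, for $i=1,2$,
\[ M_0 A_i M_0^\top = p(N)\,A_i\,p(N^\top) = A_i\,p(N^\top)^2 = A_i N^\top = N A_i . \]
So $M_0\in GL(\mathbb{C}^d)$ satisfies $(M_0A_1M_0^\top, M_0A_2M_0^\top)=(NA_1,NA_2)$. (Invertibility of one of the $A_i$ is not used in this step; it enters the surrounding argument by making the solution set a single congruence orbit, which is what is exploited there.)

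It then remains to normalise the determinant. From $M_0^2=N$ and $\det N=1$ we get $\det M_0=\pm1$; if $\det M_0=1$ we take $M=M_0$ and are done. If $\det M_0=-1$, the full solution set is $M_0\,\Gamma$ with $\Gamma:=\{R\in GL(\mathbb{C}^d): RA_1R^\top=A_1,\ RA_2R^\top=A_2\}$, so it suffices to exhibit $R\in\Gamma$ with $\det R=-1$ and set $M:=M_0R$. When $d$ is odd one takes $R=-I$. In general $\Gamma$ is the centraliser of the self-adjoint operator $A_1A_2^{-1}$ inside the complex orthogonal group of the nondegenerate form carried by the invertible member of the pair, and a determinant $-1$ element is produced from a reflection adapted to that structure (equivalently, via the simultaneous canonical form of the pencil $sA_1+tA_2$ over $\mathbb{C}$). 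I expect this determinant bookkeeping — passing to the orthogonal group of $A_2$, decomposing $A_1A_2^{-1}$ with respect to it, and locating a suitable reflection — to be the only delicate point; the construction of $M_0$ is the short, essential step.
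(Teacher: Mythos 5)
Your construction of $M_0$ is correct and coincides in substance with the paper's proof: the paper also takes a square root of $N$ (via holomorphic functional calculus, commuting with $A_1A_2^{-1}$ and satisfying $(N^{1/2})^\top=A_2^{-1}N^{1/2}A_2$) and verifies $N^{1/2}A_i(N^{1/2})^\top=NA_i$ by the same two-line computation; your polynomial square root together with the intertwining identity $q(N)A_i=A_i\,q(N^\top)$ is the same idea in slightly cleaner form, and, as you note, it does not even use the invertibility hypothesis at that stage.

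The deferred determinant step, however, is a genuine gap, and it cannot be filled in the way you anticipate: for even $d$ the group $\Gamma=\{R: RA_1R^\top=A_1,\ RA_2R^\top=A_2\}$ may contain no element of determinant $-1$, and then the $SL(\mathbb{C}^d)$ conclusion is unattainable. Concretely, take $d=2$,
\[ A_1=\begin{pmatrix}1&1\\1&0\end{pmatrix},\qquad A_2=\begin{pmatrix}0&1\\1&0\end{pmatrix},\qquad N=\begin{pmatrix}-1&-1\\0&-1\end{pmatrix}\in SL(\mathbb{C}^2). \]
Then $NA_1$ and $NA_2$ are symmetric, $A_1A_2^{-1}=J_2(1)$ is a single Jordan block, so $\Gamma$ (the centraliser of $J_2(1)$ intersected with the orthogonal group of the form $A_2$) equals $\{\pm I\}$, while solving the two congruence equations directly yields exactly the matrices $M=\begin{pmatrix}a&-1/(2a)\\0&-1/a\end{pmatrix}$ with $a^2=-1$, all of determinant $-1$. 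So the statement with $M\in SL(\mathbb{C}^d)$ is in fact false as written; your $M_0$ (of determinant $\pm1$) is the best possible, and no ``reflection'' repairs the sign. For what it is worth, the paper's own proof shares this lacuna — it simply sets $M=N^{1/2}$ without addressing $\det N^{1/2}=\pm1$ — and the defect is immaterial where the lemma is applied: replacing $M_0$ by $\zeta^{-1}M_0$ with $\zeta^d=-1$ gives an element of $SL(\mathbb{C}^d)$ carrying $(A_1,A_2)$ to $\zeta^{-2}(NA_1,NA_2)$, and nonzero scalar factors do not affect $(\rho\times\sigma)$-semistability since the invariant polynomials may be taken homogeneous. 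The clean way to finish your write-up is therefore to stop at $M_0\in GL(\mathbb{C}^d)$ with $\det M_0=\pm1$ and add that scalar remark, rather than trying to land exactly in $SL(\mathbb{C}^d)$.
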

We remark that the lemma can be extended to general $n$-tuples of symmetric matrices by essentially the same proof.
\begin{proof}
Assume $A_2$ is invertible, without loss of generality. We will show that it suffices to take $M$ to be a square root of $N$.\par 
Since $NA_2 = (NA_2)^\top = A_2 N^\top$, we have 
\begin{equation} \label{eq:matrix_relationship_A2_N}
N^{\top} = A_2^{-1} N A_2, 
\end{equation}
and therefore $N (A_1 A_2^{-1}) = A_1 N^\top A_2^{-1} = (A_1 A_2^{-1}) N$. In other words, $A_1 A_2^{-1}$ commutes with $N$. Since $N$ is a (complex) invertible matrix, it has a square root $N^{1/2}$ that commutes with $A_1 A_2^{-1}$ too. Indeed, this can be constructed via holomorphic calculus as follows: let $\log z$ denote a branch of the logarithm such that the branch cut does not contain any eigenvalue of $N$; then we define by Cauchy's formula
\[ \operatorname{Log} N := \frac{1}{2\pi i} \int_{\gamma} \log z \, (z I - N)^{-1} \,dz, \]
where $\gamma$ is the boundary of a domain that encloses the spectrum of $N$ and avoids the branch cut of $\log z$; finally, we define 
\[ N^{1/2} := \operatorname{Exp}\Big(\frac{1}{2} \operatorname{Log} N\Big). \]
It is easy to see that $N^{1/2}$ is indeed a square root of $N$ and that, thanks to the formula above, $N^{1/2}$ commutes with $A_1 A_2^{-1}$ as well. Notice that we also have the analogue of \eqref{eq:matrix_relationship_A2_N} for $N^{1/2}$, that is we have $(N^{1/2})^\top = A_2^{-1} N^{1/2} A_2$. As a consequence we have 
\begin{align*}
N^{1/2} A_1 (N^{1/2})^\top &= N^{1/2} A_1 (A_2^{-1} N^{1/2} A_2) \\
& = N^{1/2} N^{1/2} (A_1 A_2^{-1}) A_2 \\
&= N A_1; 
\end{align*}
similarly,
\begin{align*}
N^{1/2} A_2 (N^{1/2})^\top &= N^{1/2} A_2 (A_2^{-1} N^{1/2} A_2) \\
&= N A_2,
\end{align*}
and the lemma follows by taking $M = N^{1/2}$.
\end{proof}
We have therefore proven that $(A,B)$ and $(\widetilde{\bm{J}},\widetilde{\bm{I}})$ belong to the same orbit, and in particular to the same $\rho$-orbit (we omit the constant factor $\mu \mu''$ from now on). Now we take into account the action $\sigma$ as well by observing that $(\widetilde{\bm{J}},\widetilde{\bm{I}})$ is unstable if and only if the element $(\widetilde{\bm{J}} - \lambda_1^\ast \widetilde{\bm{I}},\widetilde{\bm{I}})$ is, since for $N_0 := \begin{pmatrix} 1 & -\lambda_1^{\ast} \\ 0 & 1 \end{pmatrix}$ we have
\[ \sigma_{N_0}(\widetilde{\bm{J}},\widetilde{\bm{I}}) = (\widetilde{\bm{J}} - \lambda_1^\ast \widetilde{\bm{I}},\widetilde{\bm{I}}). \]\par
Evaluating the expression $\widetilde{\bm{J}} - \lambda_1^\ast \widetilde{\bm{I}}$ block by block, we see that the above is a pair of matrices of the same form as $(\widetilde{\bm{J}},\widetilde{\bm{I}})$ but where the eigenvalue of highest multiplicity has been replaced by $0$ (more precisely, each $\widetilde{J}_{r}(\lambda_{1}^{\ast})$ block has been replaced by $\widetilde{J}_{r}(0)$). We will now show that the pair $(\widetilde{\bm{J}} - \lambda_1^\ast \widetilde{\bm{I}},\widetilde{\bm{I}})$ is unstable in two steps: 
\begin{enumerate}[(i)]
\item first we will exhibit a one-parameter subgroup of $SL(\mathbb{C}^d)$ that leaves $\widetilde{\bm{I}}$ fixed but is such that in the limit $\lambda \to 0$ every $\widetilde{J}_r(0)$ block in $\widetilde{\bm{J}} - \lambda_1^\ast \widetilde{\bm{I}}$ is replaced by a block of zeroes; 
\item then we will exhibit a one-parameter subgroup of $SL(\mathbb{C}^d)\times SL(\mathbb{C}^2)$ that shows that the latter is unstable (here is where we finally make use of the fact that $m_1 > d/2$). 
\end{enumerate}
This is enough to conclude: indeed, if $(C,D)$ is $(\rho\times\sigma)$-unstable and for a one-parameter subgroup $((M_\lambda, N_\lambda))_{\lambda \in \mathbb{C}^{\times}}$ we have $\lim_{\lambda \to 0} \rho_{M_\lambda} \sigma_{N_\lambda}(A,B) = (C,D)$, we have by continuity that $Q(A,B) = Q(C,D)$ for all $(\rho\times\sigma)$-invariant polynomials (with $Q(0,0)=0$); but $Q(C,D)=0$ always, and so the same holds for $(A,B)$, which is thus unstable as well.\par 
Consider any $\widetilde{J}_r(0)$ block in $\widetilde{\bm{J}} - \lambda_1^\ast \widetilde{\bm{I}}$, with $r>1$ (if $r=1$ we do not need to do anything); the corresponding block in $\widetilde{\bm{I}}$ is $\widetilde{I}_r$. If we denote 
\[ M_\lambda = \begin{pmatrix}
\lambda^{a_1} & & \\
 & \ddots & \\
 & & \lambda^{a_r}
\end{pmatrix} \]
then we see that 
\begin{align*}
M_\lambda \widetilde{J}_r(0) M_\lambda^\top &= \begin{pmatrix}
 & & & \lambda^{a_1 + a_{r-1}} & 0 \\
 & & \text{\reflectbox{$\ddots$}} & \text{\reflectbox{$\ddots$}} & \\
 & \lambda^{a_{r-2} + a_2}  & 0 & & \\
\lambda^{a_{r-1} + a_1} & 0 & & & \\
0 & & & & 
\end{pmatrix},\\
M_\lambda \widetilde{I}_r M_\lambda^\top &= \begin{pmatrix}
 & & & \lambda^{a_1 + a_r} \\
 & & \text{\reflectbox{$\ddots$}} & \\
  & \lambda^{a_{r-1} + a_2} & & \\
\lambda^{a_r + a_1} & & & 
\end{pmatrix}.
\end{align*}
If $r$ is even we choose
\[ (a_1,\ldots, a_r) = \Big(\frac{r}{2}, \frac{r}{2} - 1, \ldots, 1 - \frac{r}{2}, - \frac{r}{2}\Big) \]
and if $r$ is odd we choose 
\[ a_j := \left\lfloor \frac{r}{2} \right\rfloor - (j-1); \]
these choices satisfy the condition $\sum_{j=1}^{r} a_j =0$, and moreover they satisfy $a_{r - j} + a_j >0$ and $a_{r-j} + a_{j+1}=0$ for every $j$. Thus it is immediate that
\[ \lim_{\lambda \to 0} M_\lambda \widetilde{J}_r(0) M_\lambda^\top = 0, \quad M_\lambda \widetilde{I}_r M_\lambda^{\top} = \widetilde{I}_r. \]
It is then clear that we can construct (block by block) a one-parameter subgroup $(M_\lambda)_{\lambda \in \mathbb{C}^\times} \subset SL_d(\mathbb{C})$ such that 
\[ \lim_{\lambda \to 0} \rho_{M_\lambda}(\widetilde{\bm{J}} - \lambda_1^\ast \widetilde{\bm{I}}, \widetilde{\bm{I}}) = (\bm{J}_{0}, \widetilde{\bm{I}}), \]
where $\bm{J}_{0}$ is the matrix obtained from $\widetilde{\bm{J}} - \lambda_1^\ast \widetilde{\bm{I}}$ by replacing every $\widetilde{J}_r(0)$ block with a block of zeroes of the same $r \times r$ size (notice that we choose $\rho_{M_\lambda}$ to act trivially on the blocks of non-zero eigenvalue).\par 
Finally, we show that $(\bm{J}_{0}, \widetilde{\bm{I}})$ is $(\rho\times\sigma)$-unstable. By reordering the blocks (something that can be easily achieved via $\rho$) we may assume that $\bm{J}_{0}, \widetilde{\bm{I}}$ are of the form
\[ \bm{J}_{0} = \begin{tikzpicture}[baseline=(current bounding box.center)]%
\matrix[matrix of math nodes,
inner sep=0,
nodes={draw,outer sep=0,inner sep=2pt},
every left delimiter/.style={xshift=1ex},%tighter delimiter spacing
every right delimiter/.style={xshift=-1ex},
left delimiter={(},right delimiter={)},
column sep=-\pgflinewidth,row sep=-\pgflinewidth] (m) {
 \mbox{\huge 0} & \\
 & \bm{J}_1\\
};
\end{tikzpicture}, \quad
\widetilde{\bm{I}} = \begin{tikzpicture}[baseline=(current bounding box.center)]%
\matrix[matrix of math nodes,
inner sep=0,
nodes={draw,outer sep=0,inner sep=2pt},
every left delimiter/.style={xshift=1ex},%tighter delimiter spacing
every right delimiter/.style={xshift=-1ex},
left delimiter={(},right delimiter={)},
column sep=-\pgflinewidth,row sep=-\pgflinewidth] (m) {
\widetilde{\bm{I}}_1 & \\
 & \widetilde{\bm{I}}_2\\
};
\end{tikzpicture}
\]
where $\bm{J}_1$ is a matrix consisting of the remaining non-zero diagonal blocks of type $\widetilde{J}_r(\lambda_j)$ and $\widetilde{\bm{I}}_1,\widetilde{\bm{I}}_2$ are matrices consisting of the corresponding $\widetilde{I}_r$ diagonal blocks (in particular, $\bm{J}_1$ and $\widetilde{\bm{I}}_2$ have the same size). Observe that $\widetilde{\bm{I}}_1$ has size $m_1 \times m_1$, while $\bm{J}_1, \widetilde{\bm{I}}_2$ have size $(d-m_1) \times (d-m_1)$. If we let $M_\lambda$ denote the matrix
\[ M_\lambda := \begin{tikzpicture}[baseline=(current bounding box.center)]%
\matrix[matrix of math nodes,
inner sep=0,
nodes={draw,outer sep=0,inner sep=2pt},
every left delimiter/.style={xshift=1ex},%tighter delimiter spacing
every right delimiter/.style={xshift=-1ex},
left delimiter={(},right delimiter={)},
column sep=-\pgflinewidth,row sep=-\pgflinewidth] (m) {
\lambda^{-(d-m_1)} I_{m_1} & \\
 & \lambda^{m_1} I_{d-m_1}\\
};
\end{tikzpicture} \]
then we see that the $M_\lambda$ form a one-parameter subgroup of $SL(\mathbb{C}^d)$ and moreover we have by a direct computation that 
\[ \rho_{M_\lambda} (\bm{J}_{0}, \widetilde{\bm{I}}) = 
\left(
\begin{tikzpicture}[baseline=(current bounding box.center)]%
\matrix[matrix of math nodes,
inner sep=0,
nodes={draw,outer sep=0,inner sep=2pt},
every left delimiter/.style={xshift=1ex},%tighter delimiter spacing
every right delimiter/.style={xshift=-1ex},
left delimiter={(},right delimiter={)},
column sep=-\pgflinewidth,row sep=-\pgflinewidth] (m) {
\mbox{\huge 0} & \\
 & \lambda^{2m_1} \bm{J}_1 \\
};
\end{tikzpicture}, 
\begin{tikzpicture}[baseline=(current bounding box.center)]%
\matrix[matrix of math nodes,
inner sep=0,
nodes={draw,outer sep=0,inner sep=2pt},
every left delimiter/.style={xshift=1ex},%tighter delimiter spacing
every right delimiter/.style={xshift=-1ex},
left delimiter={(},right delimiter={)},
column sep=-\pgflinewidth,row sep=-\pgflinewidth] (m) {
\lambda^{-2(d-m_1)} \widetilde{\bm{I}}_1 & \\
 & \lambda^{2m_1} \widetilde{\bm{I}}_2 \\
};
\end{tikzpicture}
\right). \]
Consider also the one-parameter subgroup of $SL(\mathbb{C}^2)$ given by 
\[ N_\lambda := \begin{pmatrix}
\lambda^{-2(d-m_1)-1} & 0 \\
0 & \lambda^{2(d-m_1)+1}
\end{pmatrix} \]
and observe that 
\[ \rho_{M_\lambda} \sigma_{N_\lambda} (\bm{J}_{0}, \widetilde{\bm{I}}) = \left( 
\begin{tikzpicture}[baseline=(current bounding box.center)]%
\matrix[matrix of math nodes,
inner sep=0,
nodes={draw,outer sep=0,inner sep=2pt},
every left delimiter/.style={xshift=1ex},%tighter delimiter spacing
every right delimiter/.style={xshift=-1ex},
left delimiter={(},right delimiter={)},
column sep=-\pgflinewidth,row sep=-\pgflinewidth] (m) {
\mbox{\huge 0} & \\
 & \lambda^{2(2m_1-d)-1} \bm{J}_1 \\
};
\end{tikzpicture},
\begin{tikzpicture}[baseline=(current bounding box.center)]%
\matrix[matrix of math nodes,
inner sep=0,
nodes={draw,outer sep=0,inner sep=2pt},
every left delimiter/.style={xshift=1ex},%tighter delimiter spacing
every right delimiter/.style={xshift=-1ex},
left delimiter={(},right delimiter={)},
column sep=-\pgflinewidth,row sep=-\pgflinewidth] (m) {
\lambda \widetilde{\bm{I}}_1 & \\
 & \lambda^{2d+1} \widetilde{\bm{I}}_2\\
};
\end{tikzpicture}
\right).
\]
Since $m_1 > d/2$ we have $2(2m_1 - d) - 1 >0$, and therefore 
\[ \lim_{\lambda \to 0} \sigma_{N_\lambda} \rho_{M_\lambda} (\bm{J}_{0}, \widetilde{\bm{I}}) = (0,0), \]
thus completing the proof that $(A,B)$ is $(\rho\times\sigma)$-unstable if $\Delta_{A,B}$ is not identically vanishing and $\widetilde{\sigma}$-unstable.
\subsection{Case II: $\Delta_{A,B}$ vanishes identically}\label{section:semistability_vanishing_determinant}
Here we assume that $(A,B) \in \operatorname{Sym}^2(\mathbb{C}^d) \times \operatorname{Sym}^2(\mathbb{C}^d)$ is such that 
\[ \det(sA + tB) \equiv 0, \]
or in other words that $\ker(sA + tB) \neq \{0\}$ for all $(s,t) \in \mathbb{C}^2$.\par 
We perform a first reduction. Suppose that for two linearly independent pairs $(s_1,t_1), (s_2, t_2)$ we have that 
\[ \ker (s_1 A + t_1 B) \cap \ker (s_2 A + t_2 B) \neq \{ 0 \} \]
(that is, the kernels have non-trivial intersection); we claim that $(A,B)$ is automatically $(\rho\times\sigma)$-unstable as a consequence. Notice that we can assume for simplicity that $(s_1,t_1) = (1,0)$ and $(s_2,t_2)=(0,1)$ by using the action $\sigma$ (this is essentially the same argument that was given before). Thus we are assuming that there exists a vector $\bm{v} \neq 0$ such that $A \bm{v} = B\bm{v} = 0$. Pick then vectors $\bm{u}_2,\ldots,\bm{u}_d$ so that $\{\bm{v}, \bm{u}_2,\ldots,\bm{u}_d\}$ forms a basis of $\mathbb{C}^d$ and moreover normalise them so that the matrix
\[ M := \begin{pmatrix} 
 & \bm{v}^{\top} & \\
 & \bm{u}_2^\top & \\
 & \vdots & \\
 & \bm{u}_d^{\top} &
\end{pmatrix} \]
is in $SL(\mathbb{C}^d)$. We then see by direct computation that $\rho_M(A,B)$ consists of a pair of matrices each of the form 
\[ \begin{pmatrix}
0 & 0 & \cdots & 0 \\
0 & \bm{\ast} & \cdots & \bm{\ast} \\
\vdots & \vdots & \ddots & \vdots \\
0 & \bm{\ast} & \cdots & \bm{\ast}
\end{pmatrix} \]
(where the asterisks denote possibly non-zero entries). If we consider now the one-parameter subgroup of $SL(\mathbb{C}^d)$ given by 
\[ M_\lambda := \begin{pmatrix}
\lambda^{-(d-1)} & & & \\
 & \lambda & & \\
 & & \ddots & \\
 & & & \lambda
\end{pmatrix}, \]
a computation reveals immediately that the effect of $\rho_{M_\lambda}$ on $\rho_M(A,B)$ is multiplication of every non-zero entry by $\lambda^2$ (because of the particular form of the matrices). Therefore we have 
\[ \lim_{\lambda \to 0} \rho_{M_\lambda}(\rho_M(A,B)) = (0,0) \]
and thus $(A,B)$ is indeed$(\rho\times\sigma)$-unstable.\par
In light of the above, we will assume in the rest of the argument that for every pair of linearly independent $(s_1,t_1), (s_2, t_2) \in \mathbb{C}^2$ we have 
\begin{equation}\label{eq:sA+tB_kernel_intersections_trivial}
\ker (s_1 A + t_1 B) \cap \ker (s_2 A + t_2 B) = \{ 0 \}. 
\end{equation}
Letting $I,J \subset \{1,\ldots, d\}$ with $|I|=|J|$, we denote by $\det_{I,J}M$ the minor of the matrix $M$ obtained by selecting the rows with index in $I$ and the columns with index in $J$. If $(A,B) \neq (0,0)$, some minors of $sA+tB$ will be not identically vanishing. We can then find $I_\ast,J_\ast$ of maximal cardinality such that $\det_{I_\ast,J_\ast}(sA+tB)$ does not vanish identically (and therefore it is non-zero for all $(s,t)$ except for a finite number of directions $as + bt = 0$). We define the set of \emph{generic} $(s,t)$ to be 
\[ \mathscr{G} := \{ (s,t) \in \mathbb{C}^2 : \det\nolimits_{I_\ast,J_\ast}(sA+tB) \neq 0 \}. \]
Notice that for $(s,t)$ generic we have that the dimension of $\ker (sA+tB)$ is constant and equal to $d$ minus the size of the minor; for $(s,t) \not\in \mathscr{G}$ the dimension of the kernel is larger instead. It will be useful to consider the vector space generated by the kernels of $sA + tB$ for generic $(s,t)$, that is
\[ V := \operatorname{Span}\Big\{ \bigcup_{(s,t) \in \mathscr{G}} \ker (sA + tB) \Big\}. \]
We let $k := \dim V$ and notice that by assumption \eqref{eq:sA+tB_kernel_intersections_trivial} we have $k \geq 2$. For convenience, we choose a basis $\{\bm{v}_1, \ldots, \bm{v}_k\}$ of $V$ such that for every $j \in \{1,\ldots, k\}$ 
\[ \bm{v}_j \in \ker(\tilde{s}_j A + \tilde{t}_j B) \]
for some $(\tilde{s}_j,\tilde{t}_j) \in \mathscr{G}$.\par
The first important observation to make is that all the images $(sA + tB)V$ for $(s,t) \in \mathscr{G}$ consist of a same vector space $H$. To begin with, all such images have the same dimension: indeed, for each $(s,t) \in \mathscr{G}$ we have $\ker (sA+tB) \leq V$ and $\dim \ker (sA+tB)$ is a constant; therefore $\dim (sA+tB)V = \dim V - \dim \ker (sA + tB)$ is a constant too. To conclude the claim, it will suffice to verify that for two linearly independent $(s_1,t_1), (s_2, t_2) \in \mathscr{G}$ we have 
\[ (s_1 A + t_1 B) V = (s_2 A + t_2 B) V =: H; \]
for if this is true, then by linear independence we will have $(sA + tB)V \leq H$ for every other $(s,t) \in \mathscr{G}$, and since the dimensions must be the same we will have actually $(sA+tB) V = H$ too. Take then $(s_1,t_1), (s_2, t_2)$ that are linearly independent and not multiples of any of the $(\tilde{s}_j,\tilde{t}_j)$ associated to the basis chosen above. For any $j \in \{1,\ldots,k\}$ there exist coefficients $a_j,b_j$ (both non-zero) such that 
\[ \tilde{s}_j A + \tilde{t}_j B = a_j (s_1 A + t_1 B) + b_j (s_2 A + t_2 B), \]
and since $(\tilde{s}_j A + \tilde{t}_j B)\bm{v}_j = 0$ we have 
\[ a_j (s_1 A + t_1 B)\bm{v}_j = - b_j (s_2 A + t_2 B) \bm{v}_j. \]
Therefore 
\begin{align*}
(s_1 A + t_1 B)V &= \operatorname{span} \{ (s_1 A + t_1 B)\bm{v}_j : 1\leq j \leq k\} \\
&= \operatorname{span} \{ (s_2 A + t_2 B)\bm{v}_j : 1\leq j \leq k\} = (s_2 A + t_2 B)V, 
\end{align*}
as desired.\par
The second observation to make (which is a consequence of the first) is that $V$ and $H$ are actually orthogonal to each other. Indeed, letting $\bm{u} \in H$, it suffices to show that $\langle \bm{u}, \bm{v}_j\rangle = 0 $ for all $j$. This is however easy to see: since $\bm{u} \in H$ and $H = (\tilde{s}_j A + \tilde{t}_j B)V$, there is a vector $\bm{v} \in V$ such that $(\tilde{s}_j A + \tilde{t}_j B)\bm{v} = \bm{u}$, and since the matrices are symmetric we have 
\[ \langle (\tilde{s}_j A + \tilde{t}_j B)\bm{v}, \bm{v}_j\rangle = \langle \bm{v}, (\tilde{s}_j A + \tilde{t}_j B)\bm{v}_j\rangle = \langle \bm{v} , 0 \rangle = 0. \]
Thus $V$ and $H$ are orthogonal, and besides $\dim H < k$ we have therefore $\dim H \leq d-k$ too.\par
We now claim that, as a consequence of the above observations, the $(\sigma\times\rho)$-orbit of $(A,B)$ contains a pair of symmetric matrices both of the form indicated in Figure \ref{eq:special_matrix_form_when_det(sA+tB)_vanishes}.
\begin{figure}[ht]
\centering
\begin{tikzpicture}[baseline=(current  bounding  box.center)]
\matrix [matrix of math nodes,left delimiter=(,right delimiter=),row sep=0.1cm,column sep=0.1cm] (m) {
      & & & & & \bm{\ast} & \cdots & \bm{\ast} \\
      & & & & & \vdots & \ddots & \vdots \\
      & & & & & \vdots & \ddots & \vdots \\
      & & & & & \bm{\ast} & \cdots & \bm{\ast} \\
      & & & & \bm{\ast} & \cdots & \cdots & \bm{\ast} \\
     \bm{\ast} & \cdots & \cdots & \bm{\ast} & \vdots & \ddots & & \vdots \\
     \vdots & \ddots & \ddots & \vdots & \vdots & & \ddots & \vdots \\
     \bm{\ast} & \cdots & \cdots & \bm{\ast} & \bm{\ast} & \cdots & \cdots & \bm{\ast} \\ };

\node[above=1pt of m-1-6] (top-6) {};
\node[above=1pt of m-1-8] (top-8) {};

\node[right=4pt of m-1-8] (right-1) {};
\node[right=4pt of m-4-8] (right-4) {};
\node[right=4pt of m-5-8] (right-5) {};
\node[right=4pt of m-8-8] (right-8) {};

\node[left=4pt of m-6-1] (left-6) {};
\node[left=4pt of m-8-1] (left-8) {};

\node[below=1pt of m-8-1] (below-1) {};
\node[below=1pt of m-8-4] (below-4) {};
\node[below=1pt of m-8-5] (below-5) {};
\node[below=1pt of m-8-8] (below-8) {};

\node[rectangle,above delimiter=\{] (del-top-6) at ($0.5*(top-6.south) +0.5*(top-8.south)$) {\tikz{\path (top-6.south west) rectangle (top-8.north east);}};
\node[above=10pt] at (del-top-6.north) {$\dim H$};

\node[rectangle,right delimiter=\}] (del-right-1) at ($0.5*(right-1.west) +0.5*(right-4.west)$) {\tikz{\path (right-1.north east) rectangle (right-4.south west);}};
\node[right=22pt] at (del-right-1.west) {$k$};

\node[rectangle,right delimiter=\}] (del-right-5) at ($0.5*(right-5.west) +0.5*(right-8.west)$) {\tikz{\path (right-5.north east) rectangle (right-8.south west);}};
\node[right=22pt] at (del-right-5.west) {$d-k$};

\node[rectangle,left delimiter=\{] (del-left-6) at ($0.5*(left-6.east)+0.5*(left-8.east)$) {\tikz{\path (left-6.north east) rectangle (left-8.south west);}};
\node[left=10pt] at (del-left-6.west) {$\dim H$};

\node[rectangle,below delimiter=\}] (del-below-1) at ($0.5*(below-1.north) +0.5*(below-4.north)$) {\tikz{\path (below-1.north west) rectangle (below-4.south east);}};
\node[below=10pt] at (del-below-1.south) {$k$};

\node[rectangle,below delimiter=\}] (del-below-5) at ($0.5*(below-5.north) +0.5*(below-8.north)$) {\tikz{\path (below-5.north west) rectangle (below-8.south east);}};
\node[below=10pt] at (del-below-5.south) {$d-k$};

\end{tikzpicture}
\caption{\footnotesize The special form of the matrices $A,B$ in the appropriate basis.}\label{eq:special_matrix_form_when_det(sA+tB)_vanishes} 
\end{figure}
\begin{remark}
We caution the reader that in the matrix diagram of Figure \ref{eq:special_matrix_form_when_det(sA+tB)_vanishes} the shape of the blocks of non-zero entries could be slightly misleading for large $k$ and $\dim H = d-k$ (more precisely, for $k>d/2$), but the block dimensions as stated are correct for all values of $k\geq 2$. For example, when $k = d-2$ we have $\dim H \leq 2$, and thus if $\dim H = 2$ the matrix looks like 
\[ \begin{pmatrix}
 & & & \bm{\ast} & \bm{\ast} \\
  & & & \vdots & \vdots \\
   & & & \bm{\ast} & \bm{\ast} \\
    \bm{\ast} & \cdots & \bm{\ast} & \bm{\ast} & \bm{\ast} \\
     \bm{\ast} & \cdots & \bm{\ast} & \bm{\ast} & \bm{\ast} 
\end{pmatrix};
 \]
it is evident that the block dimensions here are still as indicated in Figure \ref{eq:special_matrix_form_when_det(sA+tB)_vanishes}.
\end{remark}
By the usual argument using the action $\sigma$, we assume that $(1,0), (0,1) \in \mathscr{G}$ (that is, we can work with $(A,B)$). In order to show that there exists $M \in SL(\mathbb{C}^d)$ such that $\rho_M(A,B) = (MAM^\top, MBM^\top)$ consists of a pair of matrices both of the same form given in Figure \ref{eq:special_matrix_form_when_det(sA+tB)_vanishes}, begin by observing that if we write
\[ M = \begin{pmatrix}
& \bm{u}_1^{\top} & \\
& \vdots & \\
& \bm{u}_d^{\top} &
\end{pmatrix} \]
then the $(i,j)$-entry of $M A M^{\top}$ is $\langle \bm{u}_i, A \bm{u}_j\rangle$ (and the same holds for $B$). We then construct a basis of $\mathbb{C}^d$ in the following way: choose first $\bm{u}_1, \ldots, \bm{u}_k$ to be a basis of $V$, and then complete it to a basis of $H^{\perp}$ by further choosing linearly independent $\bm{u}_{k+1}, \ldots, \bm{u}_{d - \dim H}$; finally, complete the list to a basis of the whole $\mathbb{C}^d$ by choosing $\bm{u}_{d - \dim H + 1}, \ldots, \bm{u}_d$ to be a basis of $H$ (normalised so that $\det M = 1$). We have thus by construction (recall that $H = AV = BV$) that for all $1 \leq i \leq d - \dim H$ and $1 \leq j \leq k$
\[ \langle \bm{u}_i, A \bm{u}_j\rangle = \langle \bm{u}_i, B \bm{u}_j\rangle = 0; \]
since the matrices are symmetric, the fact that $MAM^\top$ and $MBM^\top$ are in the form of Figure \ref{eq:special_matrix_form_when_det(sA+tB)_vanishes} follows.\par 
We assume therefore that $A$ and $B$ are both of the form given in Figure \ref{eq:special_matrix_form_when_det(sA+tB)_vanishes} and proceed to make the block structure more explicit. Letting $\ell := \dim H$ for convenience, we can decompose $A$ and $B$ into (possibly rectangular) blocks as indicated in Figure \ref{eq:special_matrix_form_when_det(sA+tB)_vanishes_BLOCKS}, with dimensions as given in there.
\begin{figure}[ht]
\centering
\begin{tikzpicture}[baseline=(current  bounding  box.center)]
\matrix [matrix of math nodes,left delimiter=(,right delimiter=),row sep=0.1cm,column sep=0.1cm] (m) {
 {} & {} & {} & {} & {} & {} & {} & {} & {} \\
 {} & {} & {} & {} & {} & {} & {} & {A_1} & {} \\
 {} & {} & {} & {} & {} & {} & {} & {} & {} \\
 {} & {} & {} & {} & {} & {} & {} & {} & {} \\
 {} & {} & {} & {} & {A_2} & {} & {} & {A_3} & {} \\
 {} & {} & {} & {} & {} & {} & {} & {} & {} \\
 {} & {} & {} & {} & {} & {} & {} & {} & {} \\
 {} & {A_4} & {} & {} & {A_5} & {} & {} & {A_6} & {} \\
 {} & {} & {} & {} & {} & {} & {} & {} & {} \\
 };

\node[right=4pt of m-1-9] (right-1) {};
\node[right=4pt of m-3-9] (right-3) {};
\node[right=4pt of m-4-9] (right-4) {};
\node[right=4pt of m-6-9] (right-6) {};
\node[right=4pt of m-7-9] (right-7) {};
\node[right=4pt of m-9-9] (right-9) {};

\node[below=0pt of m-9-1] (below-1) {};
\node[below=0pt of m-9-3] (below-3) {};
\node[below=0pt of m-9-4] (below-4) {};
\node[below=0pt of m-9-6] (below-6) {};
\node[below=0pt of m-9-7] (below-7) {};
\node[below=0pt of m-9-9] (below-9) {};

\draw (m-1-6.north east) -- (m-1-9.north east);	
\draw (m-4-3.north east) -- (m-4-9.north east);
\draw (m-7-1.north west) -- (m-7-9.north east);
\draw (m-9-1.south west) -- (m-9-9.south east);
\draw (m-1-9.north east) -- (m-9-9.south east);	
\draw (m-1-6.north east) -- (m-9-6.south east);	
\draw (m-4-3.north east) -- (m-9-3.south east);	
\draw (m-7-1.north west) -- (m-9-1.south west);

\node[rectangle,right delimiter=\}] (del-right-1) at ($0.5*(right-1.west) +0.5*(right-3.west)$) {\tikz{\path (right-1.north east) rectangle (right-3.north west);}};
\node[right=22pt] at (del-right-1.west) {$k$};

\node[rectangle,right delimiter=\}] (del-right-4) at ($0.5*(right-4.west) +0.5*(right-6.west)$) {\tikz{\path (right-4.north east) rectangle (right-6.north west);}};
\node[right=22pt] at (del-right-4.west) {$d-k-\ell$};

\node[rectangle,right delimiter=\}] (del-right-7) at ($0.5*(right-7.west) +0.5*(right-9.west)$) {\tikz{\path (right-7.north east) rectangle (right-9.north west);}};
\node[right=22pt] at (del-right-7.west) {$\ell$};

\node[rectangle,below delimiter=\}] (del-below-1) at ($0.5*(below-1.north) +0.5*(below-3.north)$) {\tikz{\path (below-1.north west) rectangle (below-3.south west);}};
\node[below=10pt] at (del-below-1.south) {$k$};

\node[rectangle,below delimiter=\}] (del-below-4) at ($0.5*(below-4.north) +0.5*(below-6.north)$) {\tikz{\path (below-4.north west) rectangle (below-6.south west);}};
\node[below=10pt] at (del-below-4.south) {$d-k-\ell$};

\node[rectangle,below delimiter=\}] (del-below-7) at ($0.5*(below-7.north) +0.5*(below-9.north)$) {\tikz{\path (below-7.north west) rectangle (below-9.south west);}};
\node[below=10pt] at (del-below-7.south) {$\ell$};

\end{tikzpicture}
\caption{\footnotesize The decomposition of $A$ into rectangular blocks of dimensions as indicated. The decomposition of $B$ has the exact same shape. We remark that it might be the case that $d - k - \ell = 0$, in which case the blocks with the corresponding dimension are omitted (e.g.\ $A$ would contain only blocks $A_1, A_4,A_6$, which would be adjacent to each other).}\label{eq:special_matrix_form_when_det(sA+tB)_vanishes_BLOCKS}
\end{figure}
\par
We will now show that such a pair of matrices is necessarily $\rho$-unstable by producing an explicit one-parameter subgroup of $SL(\mathbb{C}^d)$ that sends $(A,B)$ to $(0,0)$ in the limit $\lambda \to 0$. This subgroup can be taken to be as follows: set
\begingroup
\allowdisplaybreaks
\begin{align*}
a_1 &:= -((d-1)\ell + d - k), \\
a_2 &:= k, \\
a_3 &:= dk,
\end{align*}
\endgroup
then define the block matrix
\[ 
M_\lambda = \begin{tikzpicture}[baseline=(current bounding box.center)]%
\matrix[matrix of math nodes,
inner sep=0,
nodes={draw,outer sep=0,inner sep=2pt},
every left delimiter/.style={xshift=1ex},%tighter delimiter spacing
every right delimiter/.style={xshift=-1ex},
left delimiter={(},right delimiter={)},
column sep=-\pgflinewidth,row sep=-\pgflinewidth] (m) {
\lambda^{a_1} I_{k} & & \\
  & \lambda^{a_2} I_{d - k - \ell} & \\
   & & \lambda^{a_3} I_{\ell} \\
};
\end{tikzpicture}
\]
(once again, if $d - k - \ell = 0$ the middle block is omitted). The $M_\lambda$'s form a one-parameter subgroup of $SL(\mathbb{C}^d)$ because the sum of all the exponents involved is
\[ a_1 k + a_2 (d - k - \ell) + a_3 \ell = -((d-1)\ell + d - k)k + k (d-k-\ell) + dk \ell = 0. \]
By inspection, the effect of $\rho_{M_\lambda}$ on matrices of the form given in Figure \ref{eq:special_matrix_form_when_det(sA+tB)_vanishes_BLOCKS} is as follows:
\begin{equation*}
\begin{aligned}
& \rho_{M_\lambda}(A,B) = \\
&\left(
\begin{tikzpicture}[baseline=(current bounding box.center)]%
\matrix[matrix of math nodes,
inner sep=0,
nodes={draw,outer sep=0,inner sep=2pt},
every left delimiter/.style={xshift=1ex},%tighter delimiter spacing
every right delimiter/.style={xshift=-1ex},
left delimiter={(},right delimiter={)},
column sep=-\pgflinewidth,row sep=-\pgflinewidth] (m) {
 & & \lambda^{a_1 + a_3} A_1\\
  & \;\;\lambda^{2 a_2} A_2\;\; & \lambda^{a_2 + a_3} A_3 \\
  \lambda^{a_1 + a_3} A_4 & \lambda^{a_2 + a_3} A_5 & \;\;\lambda^{2 a_3} A_6\;\; \\
};
\end{tikzpicture}, 
\begin{tikzpicture}[baseline=(current bounding box.center)]%
\matrix[matrix of math nodes,
inner sep=0,
nodes={draw,outer sep=0,inner sep=2pt},
every left delimiter/.style={xshift=1ex},%tighter delimiter spacing
every right delimiter/.style={xshift=-1ex},
left delimiter={(},right delimiter={)},
column sep=-\pgflinewidth,row sep=-\pgflinewidth] (m) {
 & & \lambda^{a_1 + a_3} B_1\\
  & \;\;\lambda^{2 a_2} B_2\;\; & \lambda^{a_2 + a_3} B_3 \\
  \lambda^{a_1 + a_3} B_4 & \lambda^{a_2 + a_3} B_5 & \;\;\lambda^{2 a_3} B_6\;\; \\
};
\end{tikzpicture}
\right). \end{aligned} \end{equation*} 
Notice that $a_2, a_3 > 0$ and moreover, since $k > \ell$,
\[ a_1 + a_3 = -((d-1)\ell + d - k) + dk = d(k-\ell)-(d - k - \ell) \geq k + \ell > 0; \]
therefore we obtain
\[ \lim_{\lambda \to 0} \rho_{M_\lambda}(A,B) = (0,0), \]
and the proof of Proposition \ref{prop::semistability_SL2xSLd_det} (and hence of Theorem \ref{thm:well_curvedness_characterisation}) is concluded.

\section{Proof of Theorems \ref{main_theorem} and \ref{thm:flat_surfaces}}\label{section:main_proof}
We will now prove our main results by a simple instance of Christ's Method of Refinements. The method will reduce matters to sublevel set estimates for the polynomial $\det(s \nabla^2 Q_1 + t\nabla^2 Q_2)$, and these will be proven in Section \ref{section:sublevel_set_estimates}.
\begin{remark}\label{remark:other_proofs}
The result can also be proven by different methods -- in particular, the inflation technique in \cite{Gressman2022} and the testing conditions in \cite{Gressman2022b} (both due to Gressman) can each be employed to provide an alternative proof. Proceeding with either of those methods, the boundedness of the operator $\mathcal{T}$ is reduced to verifying respectively a non-concentration inequality and an integrability condition that explicitly involves $\det(s \nabla^2 Q_1 + t\nabla^2 Q_2)$; Theorem \ref{thm:well_curvedness_characterisation} provides the information needed to conclude either of these. In this paper we have chosen to use Christ's Method of Refinements mainly in the interest of providing a more self-contained exposition and because the condition to be verified (the sublevel set estimate) is slightly simpler.
\end{remark}
\subsection{Preliminaries and refinements} We begin by reformulating the desired estimates in combinatorial fashion. Let $1 \leq p,q <\infty$ be exponents such that $2/q = 1/p$; the restricted weak-type version of inequality $\|\mathcal{T}f\|_{L^q} \lesssim_{p,q} \|f\|_{L^p}$ is then
\[ \langle \mathcal{T}\mathbf{1}_E, \mathbf{1}_F\rangle \lesssim_{q} |E|^{2/q} |F|^{1/{q'}}, \]
where $E \subset \mathbb{R}^d \times [-1,1]^2$ and $F \subset \mathbb{R}^d \times [-1,1]^d$ have finite measure. Introducing the quantities
\begin{equation}\label{eq:defn_alpha_beta}
\alpha := \frac{\langle \mathcal{T}\mathbf{1}_E, \mathbf{1}_F\rangle}{|F|}, \quad \beta:= \frac{\langle \mathbf{1}_E, \mathcal{T}^{\ast}\mathbf{1}_F\rangle}{|E|}, 
\end{equation}
the restricted weak-type inequality above can be rewritten with a little algebra as 
\begin{equation}\label{eq:restricted_2_plane_non_mixed_weak_endpoint}
\alpha^{q-1}\beta \lesssim_{q} |E|.
\end{equation}
The problem has then been reduced to that of providing a lowerbound for the measure of $E$ in terms of $\alpha, \beta$. When the surface $\Sigma(Q_1,Q_2)$ is well-curved, we will prove this lowerbound for $q$ arbitrarily close to the critical value $q_0 = \tfrac{d+4}{2}$ (recall that the strong-type endpoint inequality is $L^{(d+4)/4} \to L^{(d+4)/2}$); and when we are in the situation described in the statement of Theorem \ref{thm:flat_surfaces}, we will prove the lowerbound for $q = m_{\ast}+2$. Theorems \ref{main_theorem} and \ref{thm:flat_surfaces} will then follow by entirely standard interpolation arguments.\par 
We now introduce some ``refinements'' of the sets $E,F$ with improved behaviour (this is what gives the method its name). Observe that if we let 
\[ F' := \Big\{ (x,\xi) \in F : \mathcal{T}\mathbf{1}_{E}(x,\xi) > \frac{\alpha}{2} \Big\} \]
then we have $\langle \mathcal{T}\mathbf{1}_{E}, \mathbf{1}_{F'} \rangle \geq \tfrac{1}{2} \langle \mathcal{T}\mathbf{1}_{E}, \mathbf{1}_{F}\rangle$: indeed, clearly
\[ \langle \mathcal{T}\mathbf{1}_{E}, \mathbf{1}_{F \setminus F'} \rangle \leq \frac{\alpha}{2} |F| = \frac{1}{2} \langle \mathcal{T}\mathbf{1}_{E}, \mathbf{1}_{F} \rangle, \]
and the claim follows; notice that $F' \neq \varnothing$, as a consequence. Thus in $F'$ we have enforced a lowerbound on $\mathcal{T}\mathbf{1}_{E}$. Next we observe that we can enforce an analogous lowerbound in a refinement of $E$ (but with respect to $\mathcal{T}^{\ast} \mathbf{1}_{F'}$ instead): we let 
\[ E' := \Big\{(y,s,t) \in E : \mathcal{T}^{\ast} \mathbf{1}_{F'}(y,s,t) > \frac{\beta}{4} \Big\}, \]
and by a repetition of the argument above we see that we have 
\[ \langle \mathbf{1}_{E'}, \mathcal{T}^{\ast}\mathbf{1}_{F'} \rangle \geq \langle \mathcal{T}\mathbf{1}_{E}, \mathbf{1}_{F'} \rangle - \tfrac{1}{4} \langle \mathcal{T}\mathbf{1}_{E}, \mathbf{1}_{F} \rangle \geq \tfrac{1}{4} \langle \mathcal{T}\mathbf{1}_{E}, \mathbf{1}_{F} \rangle \]
(so that $E' \neq \varnothing$ too). Summarising, we have shown the following lemma.
\begin{lemma}\label{lemma:refinements}
Let $E \subset \mathbb{R}^d \times [-1,1]^2$ and $F \subset \mathbb{R}^d \times [-1,1]^d$ be sets of finite positive measure, and let $\alpha,\beta$ be as in \eqref{eq:defn_alpha_beta}. Then there exist non-empty subsets $E' \subseteq E, F' \subseteq F$ such that 
\begin{enumerate}[(i)]
\item for every $(x,\xi) \in F'$ we have $\mathcal{T}\mathbf{1}_{E}(x,\xi) \gtrsim \alpha$, 
\item for every $(y,s,t) \in E'$ we have $\mathcal{T}^{\ast} \mathbf{1}_{F'}(y,s,t) \gtrsim \beta$.
\end{enumerate}
\end{lemma}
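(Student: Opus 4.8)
The plan is to prove the lemma by a two-step pigeonholing (``refinement'') argument, relying only on the fact that $\mathcal{T}$ — and hence $\mathcal{T}^\ast$ — is an averaging operator with non-negative kernel, so that all the pairings below are non-negative and monotone with respect to shrinking the sets. First I would discard from $F$ the portion on which $\mathcal{T}\mathbf{1}_E$ is too small: set
\[ F' := \Big\{ (x,\xi)\in F : \mathcal{T}\mathbf{1}_E(x,\xi) > \tfrac{\alpha}{2}\Big\}. \]
On $F\setminus F'$ one has $\mathcal{T}\mathbf{1}_E \le \alpha/2$ pointwise, whence $\langle \mathcal{T}\mathbf{1}_E,\mathbf{1}_{F\setminus F'}\rangle \le \tfrac{\alpha}{2}|F| = \tfrac12\langle\mathcal{T}\mathbf{1}_E,\mathbf{1}_F\rangle$; subtracting this from $\langle \mathcal{T}\mathbf{1}_E,\mathbf{1}_F\rangle$ gives $\langle\mathcal{T}\mathbf{1}_E,\mathbf{1}_{F'}\rangle \ge \tfrac12\langle\mathcal{T}\mathbf{1}_E,\mathbf{1}_F\rangle > 0$, so in particular $F'\ne\varnothing$. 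Property (i) is then immediate from the definition of $F'$.

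Next I would run the same argument on the $E$ side, now measuring against $\mathcal{T}^\ast\mathbf{1}_{F'}$: set
\[ E' := \Big\{ (y,s,t)\in E : \mathcal{T}^\ast\mathbf{1}_{F'}(y,s,t) > \tfrac{\beta}{4}\Big\}. \]
On $E\setminus E'$ we have $\mathcal{T}^\ast\mathbf{1}_{F'}\le \beta/4$, so $\langle \mathbf{1}_{E\setminus E'},\mathcal{T}^\ast\mathbf{1}_{F'}\rangle \le \tfrac{\beta}{4}|E| = \tfrac14\langle\mathbf{1}_E,\mathcal{T}^\ast\mathbf{1}_F\rangle = \tfrac14\langle\mathcal{T}\mathbf{1}_E,\mathbf{1}_F\rangle$, the last step being the adjoint identity $\langle \mathbf{1}_E, \mathcal{T}^\ast\mathbf{1}_F\rangle = \langle \mathcal{T}\mathbf{1}_E, \mathbf{1}_F\rangle$. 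Together with the first step,
\begin{align*}
\langle \mathbf{1}_{E'},\mathcal{T}^\ast\mathbf{1}_{F'}\rangle
&= \langle \mathbf{1}_E,\mathcal{T}^\ast\mathbf{1}_{F'}\rangle - \langle \mathbf{1}_{E\setminus E'},\mathcal{T}^\ast\mathbf{1}_{F'}\rangle
= \langle \mathcal{T}\mathbf{1}_E,\mathbf{1}_{F'}\rangle - \langle \mathbf{1}_{E\setminus E'},\mathcal{T}^\ast\mathbf{1}_{F'}\rangle \\
&\ge \tfrac12\langle\mathcal{T}\mathbf{1}_E,\mathbf{1}_F\rangle - \tfrac14\langle\mathcal{T}\mathbf{1}_E,\mathbf{1}_F\rangle = \tfrac14\langle\mathcal{T}\mathbf{1}_E,\mathbf{1}_F\rangle > 0,
\end{align*}
so $E'\ne\varnothing$, and property (ii) holds by construction.

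There is no real obstacle here; the only point requiring a little care is the bookkeeping of the numerical constants along the chain — the threshold $\beta/4$ in the definition of $E'$ is chosen precisely so that after the second excision a positive amount of mass survives — together with the use of the adjoint identity $\langle \mathbf{1}_E, \mathcal{T}^\ast\mathbf{1}_{F'}\rangle = \langle \mathcal{T}\mathbf{1}_E, \mathbf{1}_{F'}\rangle$, which is what lets the two refinement steps be chained. Everything else reduces to the positivity and monotonicity of the averaging operators $\mathcal{T}, \mathcal{T}^\ast$.
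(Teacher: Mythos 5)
Your proposal is correct and is essentially identical to the paper's own proof: the same refinement $F'$ with threshold $\alpha/2$, the same $E'$ with threshold $\beta/4$, and the same chain of inequalities $\langle \mathbf{1}_{E'},\mathcal{T}^{\ast}\mathbf{1}_{F'}\rangle \geq \tfrac12\langle\mathcal{T}\mathbf{1}_E,\mathbf{1}_F\rangle - \tfrac14\langle\mathcal{T}\mathbf{1}_E,\mathbf{1}_F\rangle$ via positivity and the adjoint identity. No discrepancies to report.
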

The reason why these properties are remarkable is that they translate into (uniform) lowerbounds for the size of certain sets. To see this, let us introduce some notation: we let
\[ \gamma(({x},{\xi}),(s,t)):= ({x} - s \nabla Q_1(\xi) - t \nabla Q_2(\xi), s, t), \]
so that $\mathcal{T}f(x,\xi) = \iint_{|s|,|t|\leq 1} f(\gamma(({x},{\xi}),(s,t)))\,ds\,dt$; moreover, we let 
\[ \gamma^{\ast}((y,s,t),\eta)=(y + s \nabla Q_1(\eta) + t \nabla Q_2(\eta), \eta), \]
so that $\mathcal{T}^{\ast} g(y,s,t) := \int_{[-1,1]^d} g(\gamma^{\ast}((y,s,t),\eta))\,d\eta$. Now observe that
\[ \mathcal{T}^{\ast}\mathbf{1}_{F'}(y,s,t) = |\{ \eta \in [-1,1]^d : \gamma^{\ast}((y,s,t), \eta) \in F'\}|, \]
so that if we pick $(y_0,s_0,t_0) \in E'$ and we let
\[ \mathcal{B}:= \{ \eta \in [-1,1]^d : \gamma^{\ast}((y_0,s_0,t_0), \eta) \in F'\}, \]
we have by Lemma \ref{lemma:refinements}
\[ |\mathcal{B}| \gtrsim \beta. \]
Similarly, we see that if $(x,\xi) \in F'$ we have (again by Lemma \ref{lemma:refinements}) 
\[ |\{ (s,t) \in [-1,1]^2 : \gamma((x,\xi),(s,t)) \in E\} | \gtrsim \alpha; \]
we can then define for $\eta \in \mathcal{B}$
\[ \mathcal{A}_{\eta} := \{ (s,t) \in [-1,1]^2 : \gamma( \gamma^{\ast}((y_0,s_0,t_0), \eta) ,(s,t)) \in E\} \]
and have uniformly 
\[ |\mathcal{A}_{\eta}| \gtrsim \alpha. \]
\subsection{Change of variables and conclusion}
We can see from the above discussion that the function
\[ \Psi(\eta,s,t):= \gamma( \gamma^{\ast}((y_0,s_0,t_0), \eta) ,(s,t)) \]
maps the set 
\[ \bigcup_{\eta \in \mathcal{B}} (\{ \eta \} \times \mathcal{A}_{\eta}) \]
into the set $E$, thus providing a way to obtain lowerbounds on $|E|$; moreover, it is a map from $\mathbb{R}^{d+2}$ into itself, which will enable us to use the change of variables formula to obtain explicit lowerbounds. To make use of these ideas and in anticipation of the technical challenges, we introduce for every $\eta \in \mathcal{B}$ subsets $\mathcal{A}'_{\eta} \subseteq \mathcal{A}_{\eta}$, which will be specified later; these are assembled into the set 
\begin{equation}\label{eq:tower_of_parameters}
S := \bigcup_{\eta \in \mathcal{B}} (\{ \eta\} \times \mathcal{A}'_{\eta}), 
\end{equation}
and we stress that we have $\Psi(S) \subset E$. By the change of variables formula we have then
\[ |E| \geq \mu_{\Psi}^{-1} \int_{S} |J\Psi(\eta,s,t)| \,d\eta\,ds\,dt, \]
where $\mu_{\Psi} = \max_{(\eta,s,t) \in S} \# \Psi^{-1}(\eta,s,t)$ is the multiplicity of the map $\Psi$ and $J\Psi$ its Jacobian determinant, which we will now calculate. Observe that 
\[ \Psi(\eta,s,t) = (y_0 - (s-s_0) \nabla Q_1(\eta) - (t-t_0) \nabla Q_2(\eta),s,t), \]
so that the Jacobian of $\Psi$ is given by 
\[ - \begin{pmatrix}
(s-s_0)\nabla^2 Q_1(\eta) + (t-t_0)\nabla^2 Q_2(\eta) &  \nabla Q_1(\eta) & \nabla Q_2(\eta) \\
\begin{matrix}
0 & \cdots & \cdots & 0 
\end{matrix} & -1 & 0 \\
\begin{matrix}
0 & \cdots & \cdots & 0 
\end{matrix} & 0 & -1
\end{pmatrix} \]
and it is immediate that\footnote{Notice that when $Q_1,Q_2$ are quadratic forms the Jacobian determinant is independent of $\eta$.}
\begin{equation}\label{eq:Jacobian} 
J\Psi(\eta,s,t) = (-1)^{d} \det( (s-s_0)\nabla^2 Q_1(\eta) + (t-t_0)\nabla^2 Q_2(\eta)); 
\end{equation}
crucially, this is the same object that characterises the well-curvedness of $\Sigma(Q_1,Q_2)$. As for $\mu_{\Psi}$, we have $\Psi(\eta,s,t) = \Psi(\eta',s',t')$ only if $s=s', t=t'$; moreover, $Q_1,Q_2$ are quadratic forms and therefore we must have (switching again to Hessian matrices $A,B$)
\[ (s-s_0) A (\eta - \eta') + (t-t_0) B (\eta - \eta') = 0. \]
If we choose $S$ so as to impose $\det((s-s_0)A + (t-t_0)B)\neq 0$ (which we will), we see that the above equation is solved only by $\eta = \eta'$, and thus we will have $\mu_{\Psi}=1$.\par 
Assume now that the surface $\Sigma(Q_1,Q_2)$ is well-curved and fix $\epsilon >0$ arbitrarily small. We claim that we can choose subsets $\mathcal{A}'_{\eta}$ so that 
\begin{enumerate}[(i)]
\item\label{condition:lowerbound_A_eta} $|\mathcal{A}'_{\eta}| \gtrsim \alpha$ for every $\eta \in \mathcal{B}$, 
\item\label{condition:lowerbound_Jacobian} for every $(\eta,s,t) \in S$ we have $|J\Psi(\eta,s,t)| \gtrsim_{\epsilon} \alpha^{d/2 + \epsilon}$.
\end{enumerate}
If these conditions are satisfied we see immediately from \eqref{eq:tower_of_parameters} that $|S| \gtrsim \alpha \beta$ and moreover that 
\[ |E| \geq \int_{S} |J\Psi(\eta,s,t)|\,d\eta\,ds\,dt \gtrsim_{\epsilon} \alpha^{\frac{d+2}{2} + \epsilon} \beta, \]
which is precisely the desired inequality \eqref{eq:restricted_2_plane_non_mixed_weak_endpoint} for $q = \frac{d+4}{2} + \epsilon$; since $\epsilon$ is arbitrary, this proves Theorem \ref{main_theorem}. To obtain the conditions above, simply choose 
\[ \mathcal{A}'_{\eta} := \mathcal{A}_{\eta} \setminus \{ (s,t) \in [-1,1]^2 : |\det((s-s_0) A + (t - t_0)B)| < C_{\epsilon} \alpha^{d/2 + \epsilon} \} \]
for $C_{\epsilon}>0$; then by \eqref{eq:Jacobian} we see that condition \eqref{condition:lowerbound_Jacobian} is automatically satisfied. As for condition \eqref{condition:lowerbound_A_eta}, Theorem \ref{thm:well_curvedness_characterisation} and Proposition \ref{prop:sublevel_set_estimate_1} (which will be proven in Section \ref{section:sublevel_set_estimates}) imply the sublevel set estimate
\[ |\{ (s,t) \in [-1,1]^2 : |\det((s-s_0) A + (t - t_0)B)| < C_{\epsilon} \alpha^{d/2 + \epsilon} \}| \ll \alpha \]
(provided $C_{\epsilon}$ is chosen sufficiently small), from which condition \eqref{condition:lowerbound_A_eta} follows at once.\par
Suppose instead that the surface $\Sigma(Q_1,Q_2)$ is flat, but $\det(sA + tB)$ does not vanish identically and has a root of multiplicity $m_{\ast} > d/2$ (these are the hypotheses of Theorem \ref{thm:flat_surfaces}). In this case we claim that we can find subsets $\mathcal{A}'_{\eta}$ so that 
\begin{enumerate}[(i)]
\item $|\mathcal{A}'_{\eta}| \gtrsim \alpha$ for every $\eta \in \mathcal{B}$ (as before), 
\item for every $(\eta,s,t) \in S$ we have $|J\Psi(\eta,s,t)| \gtrsim \alpha^{m_{\ast}}$.
\end{enumerate}
This is achieved in exactly the same way, with the only difference being that we appeal to Proposition \ref{prop:sublevel_set_estimate_2} instead to obtain the sublevel set estimate 
\[  |\{ (s,t) \in [-1,1]^2 : |\det((s-s_0) A + (t - t_0)B)| < C \alpha^{m_{\ast}} \}| \ll \alpha. \]
Then the same argument as before shows that 
\[ |E| \gtrsim \alpha^{m_{\ast} + 1} \beta, \]
which is inequality \eqref{eq:restricted_2_plane_non_mixed_weak_endpoint} for $q = m_{\ast} + 2$, as claimed. The proofs of Theorems \ref{main_theorem} and \ref{thm:flat_surfaces} are thus concluded, conditionally on Propositions \ref{prop:sublevel_set_estimate_1} and \ref{prop:sublevel_set_estimate_2} (recall also that the negative parts of the statements will be proven in Section \ref{section:flat_surfaces}). 
\begin{remark}
In Theorem \ref{thm:flat_surfaces} and in certain cases of Theorem \ref{main_theorem} it is possible to refine the restricted weak-type inequalities to restricted strong-type inequalities by using the Inflation Method instead (also originating in M.\@ Christ's work, see \cite{Christ02,Christ05}); however, the range of exponents obtained by interpolation is the same in either case.
\end{remark}
\section{Sublevel set estimates}\label{section:sublevel_set_estimates}
In this section we will prove the sublevel set estimates that are needed to close the argument of Section \ref{section:main_proof}. There are two types of estimates (one for the well-curved case, one for the flat case), which are encapsulated in the two propositions below, stated for general homogeneous polynomials of two variables. Recall that by a root of a homogeneous polynomial in $\mathbb{R}[s,t]$ we mean a homogeneous linear divisor in $\mathbb{C}[s,t]$.
\begin{proposition}\label{prop:sublevel_set_estimate_1}
Let $P(s,t)$ be a real homogeneous polynomial of degree $d$. If all the roots of $P$ have multiplicity $\leq d/2$ then we have for every $\delta > 0$
\begin{equation}\label{eq:sublevel_set_est_well_curved}
|\{ (s,t) : |s|,|t|\lesssim 1, \, |P(s,t)|<\delta\}| \lesssim_P \, \delta^{2/d} \log^{+} 1/\delta. 
\end{equation}
\end{proposition}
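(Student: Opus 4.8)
The plan is to reduce to a one-dimensional sublevel estimate by passing to polar-type coordinates adapted to the homogeneity of $P$, and then to control the one-dimensional sublevel sets of the dehomogenised polynomial using its factorisation into linear factors together with the multiplicity hypothesis. First I would write $P(s,t) = c \prod_{j=1}^{n} (b_j s - a_j t)^{m_j}$ over $\mathbb{C}$, with $\sum_j m_j = d$ and each $m_j \le d/2$ by assumption; we may assume $c \neq 0$ (else $P\equiv 0$ and the claim is vacuous, or rather the set is all of the box, but then $P$ has no roots of any multiplicity and the hypothesis fails unless $d=0$). Since $P$ is real, the non-real roots come in conjugate pairs with equal multiplicities, so $|P(s,t)|$ is comparable on the unit box to $|c|\prod_{j} |s - \theta_j t|^{m_j} \cdot (\text{bounded factors from the complex roots and the leading coefficients})$, where the $\theta_j$ range over the distinct \emph{real} roots (written in the affine chart $t\neq 0$); the key point is that each real linear factor appears to a power $\le d/2$, and the genuinely complex factors are bounded below on a neighbourhood of the real box and so only contribute harmless constants. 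A symmetric treatment handles the chart $s \neq 0$ for directions near $t=0$.

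Next I would work in the region $|t|\sim 1$ (the region $|s|\sim 1$ is symmetric, and the two together cover the box up to the origin, which has measure zero), and substitute $s = ut$, so that the set in question becomes, after integrating out $t$,
\[
\big|\{(s,t): |s|,|t|\lesssim 1,\ |P(s,t)|<\delta\}\big| \lesssim_P \int_{|t|\sim 1} \big|\{ u : |u|\lesssim 1,\ |t|^{d}\,|g(u)| < C\delta \}\big|\, |t|\, dt,
\]
where $g(u) := \prod_j (u-\theta_j)^{m_j}$ up to the bounded factors mentioned above. Since $|t|\sim 1$ this is bounded by a constant times $|\{ u : |u|\lesssim 1,\ |g(u)| < C'\delta\}|$. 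Now $g$ is a one-variable polynomial of degree $d$ each of whose roots has multiplicity $m_j \le d/2$; near a root $\theta_j$ we have $|g(u)| \sim |u-\theta_j|^{m_j}$, so the sublevel set near $\theta_j$ has measure $\lesssim \delta^{1/m_j} \le \delta^{2/d}$ (using $m_j \le d/2$, hence $1/m_j \ge 2/d$, and $\delta$ small). Away from all the roots $|g|$ is bounded below, contributing nothing for small $\delta$. Summing over the (finitely many) roots gives a bound $O(\delta^{2/d})$ — \emph{but} this is not quite the claimed bound, and here is where the logarithm must come from.

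The main subtlety — and the step I expect to be the real obstacle — is the regime where two (or more) real roots $\theta_i, \theta_j$ are not distinct but the problem is genuinely about a single root: if the root has multiplicity exactly $d/2$ (possible only for $d$ even), then $\delta^{1/m_j} = \delta^{2/d}$ exactly, and summing a bounded number of such terms is fine; the logarithm instead arises from the change of variables accounting when one must cover the full box including directions where $|t|$ is \emph{not} comparable to $1$ — i.e.\ from patching the chart $t\sim 1$ with the chart $s\sim 1$ across the scales $|t|\sim 2^{-k}$. More precisely, redoing the integral above without restricting $|t|\sim 1$, one gets $\sum_{k\ge 0} 2^{-k}\,|\{u : |u|\lesssim 1,\ 2^{-kd}|g(u)| < C\delta\}|$; for $2^{-kd} \gtrsim \delta$ the inner set has measure $\lesssim (2^{kd}\delta)^{2/d} = 2^{2k}\delta^{2/d}$, so the term is $\lesssim 2^{k}\delta^{2/d}$, and summing over the range $0 \le k \lesssim \frac{1}{d}\log(1/\delta)$ produces the factor $\log^{+}(1/\delta)$ (the geometric sum is dominated by its largest term, which is $\sim \delta^{2/d}\cdot \delta^{-0} $... — one must check the bookkeeping: in fact it is the borderline root of multiplicity $d/2$ that makes the per-scale bound fail to be summable and forces the logarithm, exactly as in the classical van der Corput / sublevel literature). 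I would organise the write-up so that the generic roots (multiplicity $< d/2$) are disposed of with a clean $\delta^{2/d}$ bound and no log, and the logarithm is isolated as coming solely from roots of multiplicity exactly $d/2$; this also explains Remark \ref{remark:surfaces_without_log_loss}, since the log is absent precisely when $\det(sA+tB)$ has no root of multiplicity exactly $d/2$.
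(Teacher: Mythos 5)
Your strategy (dehomogenise via $s=ut$, reduce to one-variable sublevel estimates for $g$, sum over dyadic scales in $|t|$) is genuinely different from the paper's argument, which never leaves two variables: there $P$ is rewritten as $\prod_{j<k}(\theta_j\theta_k)^{\mu_{jk}}$ with non-negative exponents whose existence is obtained from Farkas' lemma (Lemma \ref{lemma:Farkas}) -- this is precisely where the hypothesis $m_j\le d/2$ enters -- and then pigeonholing plus the two-factor estimate of Lemma \ref{lemma:sublevel_set_estimate_two_factors} gives the bound. The paper itself notes that a polar-coordinate/Ikromov--M\"uller route exists, so your approach is viable in principle, and it would indeed isolate the logarithm as coming only from roots of multiplicity exactly $d/2$, consistent with Remark \ref{remark:surfaces_without_log_loss}. (The paper's Farkas route has the separate virtue, exploited in Appendix \ref{appendix:modification_general_surface}, of making the constants explicit enough to be stable under perturbation.)

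However, the computation that is supposed to produce the claimed bound does not, as written, prove it. On $\{|s|\le |t|,\ |t|\sim 2^{-k}\}$ the substitution $s=ut$ carries the Jacobian $|t|$, so the $t$-integration contributes $\int_{|t|\sim 2^{-k}}|t|\,dt\sim 2^{-2k}$, not the single factor $2^{-k}$ in your sum; the correct per-scale bound is $2^{-2k}\min\{1,(2^{kd}\delta)^{1/m_{\ast}}\}$, where $m_{\ast}\le d/2$ is the largest multiplicity. With your weight $2^{-k}$ the terms are $\min\{2^{-k},2^{k}\delta^{2/d}\}$ and the sum evaluates to $\sim\delta^{1/d}$, which is strictly weaker than $\delta^{2/d}\log^{+}(1/\delta)$ and would not suffice for the endpoint exponents the proposition is used for -- this is exactly the "bookkeeping" you left unresolved. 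With the correct weight, when $m_{\ast}=d/2$ each of the $\sim\frac1d\log(1/\delta)$ scales with $2^{kd}\lesssim\delta^{-1}$ contributes $\sim\delta^{2/d}$ and the remaining scales sum geometrically, giving $\delta^{2/d}\log^{+}(1/\delta)$; when $m_{\ast}<d/2$ the whole sum is geometric and there is no log. So the logarithm comes from summing many scales at a multiplicity-$d/2$ root (as in $s^{d/2}t^{d/2}$), not from "patching the two charts". A secondary slip: a genuinely complex factor $s-\theta t$ is \emph{not} bounded below on the real box (it vanishes at the origin); what your reduction actually needs, and what is true, is that after extracting $|t|^{d}$ the corresponding factor $u-\theta$ of $g$ satisfies $|u-\theta|\gtrsim_{\theta}1$ for real $u$, so the complex roots are harmless only after the rescaling, not before.
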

\begin{proposition}\label{prop:sublevel_set_estimate_2}
Let $P(s,t)$ be a real homogeneous polynomial of degree $d$. If $P$ has a root of multiplicity $m_{\ast} > d/2$ then we have for every $\delta > 0$
\begin{equation}\label{eq:sublevel_set_est_flat}
|\{ (s,t) : |s|,|t|\lesssim 1, \, |P(s,t)|<\delta\}| \lesssim_P \, \delta^{1/m_{\ast}}.
\end{equation}
\end{proposition}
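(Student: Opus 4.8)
The plan is to reduce to a one-dimensional estimate by factoring the polynomial over $\mathbb{C}$ and grouping the factors according to whether they vanish near the distinguished root of multiplicity $m_{\ast}$. Write $P(s,t) = c \prod_{j} (b_j s - a_j t)^{m_j}$, where by hypothesis one of the multiplicities, say $m_1$, equals $m_{\ast} > d/2$; after a linear change of variables in $(s,t)$ — which is harmless for the sublevel set estimate since it distorts Lebesgue measure by a bounded factor, and $|s|,|t| \lesssim 1$ is comparable to $|s|^2 + |t|^2 \lesssim 1$ — we may assume the distinguished root is $t = 0$, so $P(s,t) = c\, t^{m_{\ast}} R(s,t)$ with $R$ homogeneous of degree $d - m_{\ast} < m_{\ast}$ and $R(s,0) \neq 0$.

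First I would restrict attention to the region $|s| \sim 1$ (by homogeneity and symmetry, the region $|t| \sim 1$ or the full square can be handled by the same argument, after a dyadic decomposition or by switching the roles of $s$ and $t$; in fact the only place where smallness of $P$ can come from is where $|t|$ is small, since $|s| \sim 1$ forces $|R(s,t)| \sim 1$ once $|t| \lesssim |s|$). On this region, $|R(s,t)| \sim |s|^{d - m_{\ast}} \sim 1$ uniformly for $|t| \lesssim |s| \sim 1$, because $R(s,0) \neq 0$ and $R$ is continuous and homogeneous — here I would use compactness to get the lower bound $|R| \gtrsim_P 1$ on the relevant compact set. Consequently $|P(s,t)| \sim |t|^{m_{\ast}}$ there, so the sublevel set $\{|P| < \delta\}$ within this region is contained in $\{|s| \sim 1,\ |t| \lesssim \delta^{1/m_{\ast}}\}$, which has measure $\lesssim \delta^{1/m_{\ast}}$. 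Summing the $O(1)$ dyadic pieces $|s| \sim 2^{-k}$ down to $|s| \sim \delta^{1/d}$ (below which the trivial bound $|P| < \delta$ on a square of side $\delta^{1/d}$ gives measure $\delta^{2/d} \ll \delta^{1/m_{\ast}}$, since $m_{\ast} > d/2$) and using homogeneity to rescale each piece completes the estimate; alternatively one argues directly in polar-type coordinates $(r, \theta)$ with $s = r\cos\theta$, writing $P = r^d \, p(\theta)$ where $p(\theta)$ vanishes to order exactly $m_{\ast}$ at the angle corresponding to $t = 0$, and integrating.

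The main obstacle — really the only subtlety — is making the claim ``$|R(s,t)| \gtrsim_P 1$ when $|s| \sim 1$ and $|t|$ is not too large'' precise and uniform, i.e. identifying the correct region where the $t^{m_{\ast}}$ factor genuinely controls the size of $P$ and the other roots do not interfere. Since all roots of $R$ are bounded away from the root $t=0$ of $P$ (as homogeneous linear forms, i.e. as points in $\mathbb{P}^1$), there is a fixed neighborhood of $\{t = 0\} \cap \{|s| \sim 1\}$ on which $R$ is bounded below; outside that neighborhood $|t| \gtrsim_P |s| \gtrsim_P 1$ already forces $|P| \gtrsim_P 1 > \delta$ for $\delta$ small, and for $\delta$ not small the estimate is trivial since the ambient set has bounded measure. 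This dichotomy, together with the dyadic/rescaling bookkeeping, yields the bound $\delta^{1/m_{\ast}}$ with no logarithmic loss, the absence of the log being exactly what distinguishes this case from Proposition~\ref{prop:sublevel_set_estimate_1}: there the worst root has multiplicity $\le d/2$ and the competing contributions from the several moderate roots must be summed via the linear programming argument, whereas here a single dominant root dictates everything.
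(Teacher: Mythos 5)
Your reduction to a distinguished root $t=0$ is legitimate (and note why the change of variables can be taken to be a \emph{real} one: since $P$ is real, its non-real roots occur in conjugate pairs of equal multiplicity, so a root of multiplicity $m_{\ast}>d/2$ is automatically real), but the core of the written argument contains a false step. You claim that $|R(s,t)|\gtrsim_P 1$ on the whole region $|t|\lesssim|s|\sim 1$, and later that outside a fixed neighbourhood of $\{t=0\}$ one has $|P|\gtrsim_P 1>\delta$. Both assertions fail as soon as $R$ has a real root, which in general it does: take $P(s,t)=t^{m_{\ast}}(s-t)^{d-m_{\ast}}$, so that $R(s,t)=(s-t)^{d-m_{\ast}}$ vanishes on the line $s=t$, well inside $\{|s|\sim1,\ |t|\lesssim|s|\}$, and $P$ itself vanishes at points with $|s|\sim|t|\sim1$. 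Consequently $\{|P|<\delta\}$ has components near \emph{every} real root direction of $P$, not only near $t=0$, and your proof never estimates those components; the closing claim that ``a single dominant root dictates everything'' is exactly what is not established.

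The gap is fixable within your framework, but it needs an extra argument: decompose the unit square into narrow sectors about each real root direction of $P$, together with a complementary region where $|P(s,t)|\gtrsim_P |(s,t)|^d$ (there only $|(s,t)|\lesssim\delta^{1/d}$ contributes, of measure $\lesssim\delta^{2/d}\le\delta^{1/m_{\ast}}$). In the sector about a secondary root $\theta_j$ of multiplicity $m_j\le d-m_{\ast}<d/2$ one has $|P|\sim|\theta_j|^{m_j}|(s,t)|^{d-m_j}$, and a dyadic sum over scales $|(s,t)|\sim 2^{-k}$ gives measure $\lesssim\delta^{2/d}\le\delta^{1/m_{\ast}}$; in the sector about $t=0$ the same computation gives $\lesssim\delta^{1/m_{\ast}}$ because $m_{\ast}>d/2$. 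Relatedly, your ``$O(1)$ dyadic pieces'' count is off — there are $\sim\log(1/\delta)$ of them — and it is precisely the geometric decay of the rescaled contributions (coming from $m_{\ast}>d/2$, resp.\ $m_j<d/2$) that prevents a logarithmic loss. For comparison, the paper sidesteps all of this local analysis by rewriting $P=C\prod_j\theta_{\ast}^{\mu_j}\theta_j^{m_j}$ with $\mu_j>m_j$ and $\sum_j\mu_j=m_{\ast}$, pigeonholing, and applying the two-factor estimate of Lemma \ref{lemma:sublevel_set_estimate_two_factors} with unequal exponents, which yields $\delta^{1/m_{\ast}}$ directly; your sector/polar route is a genuine alternative, but only once the contributions of the other real roots are actually controlled.
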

These sublevel set estimates are sharp in several ways. First of all, it is not possible to improve the exponent $2/d$ in \eqref{eq:sublevel_set_est_well_curved}: indeed, if $|s|,|t| \lesssim \delta^{1/d}$ then each monomial in $P(s,t)$ is $\lesssim \delta$, and therefore the sublevel set contains the set $\{(s,t) : |s|,|t| \lesssim \delta^{1/d}\}$, which has measure $\gtrsim \delta^{2/d}$. Secondly, if the root multiplicity assumption of Proposition \ref{prop:sublevel_set_estimate_1} is violated, \eqref{eq:sublevel_set_est_well_curved} can no longer hold: since we can write $P(s,t) = (as + bt)^m Q(s,t)$ for some $a,b \in \mathbb{C}$ and some homogeneous polynomial $Q$ of degree $d-m$, we see that $|Q(s,t)|\lesssim 1$ and therefore the sublevel set contains the set $\{(s,t) : |s|,|t|\lesssim 1, |as+bt|^m \lesssim \delta\}$, which is seen to have measure $\gtrsim \delta^{1/m} \gg \delta^{2/d}$. This also shows that it is not possible to improve the exponent $1/m_{\ast}$ in \eqref{eq:sublevel_set_est_flat}. Finally, it is not possible in general to remove the logarithmic factor in \eqref{eq:sublevel_set_est_well_curved}: consider for example polynomials $P(s,t) = s^{d/2} t^{d/2}$ when $d$ is even.\footnote{This polynomial can be realised as $\det(sA+tB)$ for block matrices $A = \begin{pmatrix}
I & 0 \\ 0 & 0 
\end{pmatrix}$, $B = \begin{pmatrix}
0 & 0 \\ 0 & I
\end{pmatrix}$; thus the log-loss cannot be avoided even in our case of interest.}\par 
There is a rich and well-developed theory of sublevel set estimates for polynomials (and more in general for analytic functions) which runs in parallel to an analogous theory of oscillatory integral estimates with polynomial phases. The two are intimately related: indeed, it is well-known that it is possible to deduce sublevel set estimates from estimates for the corresponding oscillatory integrals (see e.g. Section 1 of \cite{CarberyChristWright}). For multivariable phases, the oscillatory integrals theory has been developed by A.\@ N.\@ Varchenko in his foundational work \cite{Varchenko}. The main takeaway of this theory is that the rate of decay is controlled by the \emph{height} of the phase, which is the supremum of the Newton distance\footnote{The Newton distance of an analytic function $f$ is the smallest $d\geq 0$ such that $(d,\ldots,d)$ belongs to the Newton diagram of $f$.} taken over all locally smooth (or analytic) coordinate systems. One could therefore prove Propositions \ref{prop:sublevel_set_estimate_1} and \ref{prop:sublevel_set_estimate_2} from the corresponding oscillatory integral estimates of Varchenko by computing the height of $P$, given the multiplicity assumption. This computation has been carried out already by I.\@ A.\@ Ikromov and D.\@ M\"{u}ller in \cite{IkromovMuller} (Corollary 3.4), in which they showed that in our case the height is $\max\{m_{\ast}, d/2\}$, where $m_{\ast}$ denotes the largest root multiplicity; thus one obtains the desired proofs. Alternatively, one could use the same corollary of \cite{IkromovMuller} and an integration argument in polar coordinates to obtain a direct proof that does not require the oscillatory integrals theory of Varchenko.\footnote{The argument proceeds by rewriting $ |\{(s,t): s^2 + t^2\leq 1, \, |P(s,t)|<\delta\}| = \int_{0}^{2\pi} \int_{0}^{1} \mathbf{1}_{[-\delta,\delta]}(r^d |P(\cos \alpha, \sin \alpha)|)\, r\,dr \,d\alpha,$ which is then equal to (letting $Q(\alpha):= P(\cos \alpha, \sin \alpha)$) $\frac{1}{2} \delta^{2/d} \int_{\{\alpha : |Q(\alpha)|>\delta\}} |Q(\alpha)|^{-2/d} \,d\alpha + \frac{1}{2} |\{ \alpha : |Q(\alpha)|<\delta\}|; $ both terms can be estimated by factoring $Q(\alpha)$ and using \cite{IkromovMuller}. The argument was pointed out to us by J.\@ Wright.} Here however we will offer our own independent proofs that rely on a simple but interesting linear programming argument (that such arguments are powerful enough to deal with sublevel set and oscillatory integral estimates was already observed in \cite{Gilula}). Besides the inherent interest, the method we employ is conveniently stable under perturbations of $P$, due to the fact that the constants involved are sufficiently explicit; this will come in handy when we prove Theorem \ref{thm:general_well_curved_surfaces} in Appendix \ref{appendix:modification_general_surface}. The estimates of Varchenko are also stable under analytic perturbations in the case of two variables, as was shown by V.\@ N.\@ Karpushkin in \cite{Karpushkin}. By contrast, the aforementioned integration argument in polar coordinates produces a constant that depends on the separation between the roots, which is not stable under perturbations.
\begin{proof}[Proof of Proposition \ref{prop:sublevel_set_estimate_1}]
Since $P \in \mathbb{R}[s,t]$ is homogeneous of degree $d$, it can be factored over $\mathbb{C}$ as 
\[ P(s,t) = C \prod_{j=1}^{d}\theta_j(s,t), \]
where the $\theta_j$ are homogeneous linear forms (that is, $\theta_j(s,t) = a_j s + b_j t$). Since $P$ is a real polynomial, we can arrange things so that the $\theta_j$ are either real or occur in complex conjugate pairs. We furthermore choose a normalisation of the $\theta_j$'s so that if $[a_j : b_j] = [a_k : b_k]$ (as points of $\mathbb{P}(\mathbb{C}^2)$) then $\theta_j = \theta_k$; thus the multiplicity of a root of $P(s,t)$ is simply the number of occurrences of a same factor $\theta$ in the product above. Notice that $C$ ends up depending on $P$. If the distinct factors are $\theta_1, \ldots, \theta_\ell$ (in particular, they are all pairwise linearly independent) and the respective multiplicities are $m_j$ (thus $\sum_{j=1}^{\ell} m_j = d$ and $m_j \leq d/2$), we can write
\[ P(s,t) = C \prod_{j = 1}^{\ell} \theta_j(s,t)^{m_j}. \]\par 
First of all, we will need to control sublevel sets of polynomials with only two distinct roots; this is achieved by the next lemma.
\begin{lemma}\label{lemma:sublevel_set_estimate_two_factors}
Let $\mu, \nu >0$ and let $\theta,\theta' \in \mathbb{C}[s,t]$ be linear forms that are $\mathbb{C}$-linearly independent. Then for every $\delta>0$
\[ |\{ s,t : |s|,|t|\lesssim 1, |\theta(s,t)^{\mu}\theta'(s,t)^{\nu}|\lesssim \delta\}| \lesssim_{\theta,\theta'} \begin{cases} \delta^{1/\max\{\mu,\nu\}} & \quad \text{ if } \mu \neq \nu \\
\delta^{1/\mu} \log^{+} 1/\delta & \quad \text{ if } \mu = \nu.
\end{cases} \]
\end{lemma}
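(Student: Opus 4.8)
The plan is to reduce to the model case by a linear change of variables and then integrate by slicing. Since $\theta$ and $\theta'$ are $\mathbb{C}$-linearly independent linear forms, the linear map $(s,t) \mapsto (u,v) := (\theta(s,t), \theta'(s,t))$ is invertible, with Jacobian a nonzero constant depending only on $\theta,\theta'$; moreover it sends the square $\{|s|,|t| \lesssim 1\}$ into some fixed bounded region (and is sent back similarly), so up to constants depending on $\theta,\theta'$ the set whose measure we want is comparable to $\{(u,v) : |u|,|v| \lesssim 1,\ |u|^\mu |v|^\nu \lesssim \delta\}$. (One should be slightly careful here when $\theta,\theta'$ are a complex-conjugate pair, so that $u = \bar v$: in that case $|u|^\mu|v|^\nu = |u|^{\mu+\nu}$ on a real 2-plane and the sublevel set is just $\{|u| \lesssim \delta^{1/(\mu+\nu)}\}$, which has measure $\lesssim \delta^{2/(\mu+\nu)}$, comfortably better than both claimed bounds; alternatively one works in $\mathbb{C}$ throughout and the same slicing argument goes through with $dA$ Lebesgue measure on $\mathbb{C}$.)

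Assuming the generic (real, independent) case, I would compute $|\{|u|,|v|\lesssim 1,\ |u|^\mu|v|^\nu \lesssim \delta\}|$ by fixing $u$ and integrating in $v$. For fixed $u$ with $|u| \lesssim 1$, the constraint on $v$ is $|v|^\nu \lesssim \delta |u|^{-\mu}$, i.e. $|v| \lesssim \min\{1,\ (\delta |u|^{-\mu})^{1/\nu}\}$, contributing a $v$-measure of $\lesssim \min\{1, \delta^{1/\nu}|u|^{-\mu/\nu}\}$. Thus the total measure is
\[ \lesssim \int_{|u| \lesssim 1} \min\{1,\ \delta^{1/\nu} |u|^{-\mu/\nu}\}\,du. \]
The two regimes are separated at $|u| \sim \delta^{1/\mu}$: for $|u| \lesssim \delta^{1/\mu}$ the integrand is $\sim 1$, contributing $\lesssim \delta^{1/\mu}$; for $\delta^{1/\mu} \lesssim |u| \lesssim 1$ the integrand is $\delta^{1/\nu}|u|^{-\mu/\nu}$, and $\int_{\delta^{1/\mu}}^{1} u^{-\mu/\nu}\,du$ evaluates to $\lesssim 1$ times $\delta^{(1-\mu/\nu)/\mu} = \delta^{1/\mu - 1/\nu}$ when $\mu < \nu$ (so the whole term is $\lesssim \delta^{1/\nu} \cdot \delta^{1/\mu-1/\nu} = \delta^{1/\mu}$... wait, one must keep track: when $\mu<\nu$ the exponent $-\mu/\nu > -1$ so the integral is $O(1)$ and the contribution is $\lesssim \delta^{1/\nu}$, giving total $\lesssim \delta^{1/\nu} = \delta^{1/\max\{\mu,\nu\}}$; symmetrically swapping $u,v$ handles $\mu>\nu$), while when $\mu = \nu$ we get $\int_{\delta^{1/\mu}}^1 u^{-1}\,du \sim \log^+(1/\delta)$, and the contribution is $\lesssim \delta^{1/\mu}\log^+(1/\delta)$, dominating the $\delta^{1/\mu}$ from the first regime. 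Collecting the cases gives exactly the claimed bound.

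The computation above is entirely routine; I do not expect any genuine obstacle here. The only points requiring a little care are: (a) handling the complex-conjugate-pair case of $\theta,\theta'$ correctly, which I would dispatch by the remark above (the bound is then even better and never the bottleneck); and (b) making the change of variables cleanly, ensuring the implicit constants are allowed to depend on $\theta,\theta'$ as the statement permits, and that boundedness of $\{|s|,|t|\lesssim 1\}$ survives pushforward. Both are bookkeeping. This lemma is a building block: in the proof of Proposition \ref{prop:sublevel_set_estimate_1} proper, the genuine content will be combining such two-root estimates over the $\ell$ distinct factors of $P$ via a linear programming / weight-optimization argument to extract the sharp exponent $2/d$ under the hypothesis $m_j \leq d/2$, but that is outside the scope of this lemma.
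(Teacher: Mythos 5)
Your core computation is exactly the paper's: after a linear change of variables the problem becomes bounding $|\{|u|,|v|\lesssim 1 : |u|^{\mu}|v|^{\nu}\lesssim\delta\}|$, and your slicing integral (splitting at $|u|\sim\delta^{1/\mu}$, with the log appearing only when $\mu=\nu$) is the ``simple integration'' the paper invokes; that part is correct. The gap is in the reduction from general complex forms to this model. You only treat two situations rigorously: $\theta,\theta'$ both real, and $\theta'=\bar\theta$ a conjugate pair. But the lemma is applied to pairs of roots of a real polynomial, and such a pair can consist of one real and one genuinely complex form, or of two complex forms that are not conjugates of each other (e.g.\ $\theta_*$ real and $\theta_j$ non-real in the proof of Proposition \ref{prop:sublevel_set_estimate_2}); these cases are not covered. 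Your fallback remark --- ``work in $\mathbb{C}$ throughout with $dA$ Lebesgue measure on $\mathbb{C}$'' --- does not make sense as stated: the variables $(s,t)$ are real, so $(\theta(s,t),\theta'(s,t))$ sweeps out only a real two-dimensional slice of $\mathbb{C}^2$, and $u,v$ cannot be treated as independent complex variables; there is no two-complex-dimensional change of variables available.

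The missing step is small but needed, and it is precisely the paper's first move: from the $\mathbb{C}$-linear independence of $\theta,\theta'$ one can choose $\hat\theta\in\{\operatorname{Re}\theta,\operatorname{Im}\theta\}$ and $\hat\theta'\in\{\operatorname{Re}\theta',\operatorname{Im}\theta'\}$ that are $\mathbb{R}$-linearly independent (if every such choice were dependent, both $\theta$ and $\theta'$ would be complex multiples of a single real form, contradicting independence). Since $|\hat\theta|\leq|\theta|$ and $|\hat\theta'|\leq|\theta'|$, the sublevel set for $(\theta,\theta')$ is contained in the one for $(\hat\theta,\hat\theta')$, and then your real change of variables and slicing argument applies verbatim, with the implicit constant $O(|\det\begin{pmatrix}\hat\theta & \hat\theta'\end{pmatrix}|^{-1})$. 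With that domination step inserted (your conjugate-pair computation then becomes superfluous, though it is correct and gives the stronger bound $\delta^{2/(\mu+\nu)}$), the proof is complete and coincides with the paper's.
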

\begin{proof}[Proof of Lemma \ref{lemma:sublevel_set_estimate_two_factors}]
From $\mathbb{C}$-linear independence we see in fact that we can pick real linear forms $\hat{\theta} \in \{ \operatorname{Re}\theta, \operatorname{Im}\theta\}$ and $\hat{\theta}' \in \{ \operatorname{Re}\theta', \operatorname{Im}\theta'\}$ so that $\hat{\theta}, \hat{\theta}'$ are $\mathbb{R}$-linearly independent. Since $|\hat{\theta}|\leq |\theta|$ and $|\hat{\theta}'|\leq |\theta'|$ we have then
\[ |\{ s,t : |s|,|t|\lesssim 1, |\theta(s,t)^{\mu}\theta'(s,t)^{\nu}|\lesssim \delta\}| \leq |\{ s,t : |s|,|t|\lesssim 1, |\hat{\theta}(s,t)|^{\mu}|\hat{\theta}'(s,t)|^{\nu} \lesssim \delta\}|, \] 
and by a linear change of variables the latter is 
\[ \lesssim_{\theta,\theta'} |\{ s,t : |s|,|t|\lesssim_{\theta,\theta'} 1, |s|^{\mu} |t|^{\nu}\lesssim \delta\}|. \]
By a simple integration we see that if $\mu \neq \nu$ then the last expression is dominated by $\lesssim \delta^{1/\max\{\mu,\nu\}}$, and if $\mu = \nu$ then it is dominated by $\lesssim \delta^{1/\mu} \log^{+} 1/\delta$.
\end{proof}
\begin{remark}\label{remark:implicit_constant_change_of_var}
The implicit constant in the estimate of Lemma \ref{lemma:sublevel_set_estimate_two_factors} can be made explicit: it is simply $O(|\det \begin{pmatrix} \hat{\theta} & \hat{\theta}' \end{pmatrix}|^{-1})$, where $\det \begin{pmatrix} \hat{\theta} & \hat{\theta}' \end{pmatrix}$ denotes the Jacobian determinant of the map $(s,t) \mapsto (\hat{\theta}(s,t), \hat{\theta}'(s,t))$, and $\hat{\theta}, \hat{\theta}'$ are as in the proof just given.
\end{remark}
We will show that for a general polynomial that satisfies the multiplicity assumption of Proposition \ref{prop:sublevel_set_estimate_1}, we can always reduce at least to the second case of the lemma.\par
As a step in the direction indicated, we claim that we can always rewrite the polynomial $P$ as a product of pairs of the form $(\theta_j\theta_k)^\mu$: more precisely, we will show that there exist quantities $\mu_{jk} \geq 0$ such that 
\begin{equation}\label{eq:alternative_factorisation}
\prod_{j=1}^{\ell} \theta_j(s,t)^{m_j} = \prod_{j=1}^{\ell} \prod_{j < k \leq \ell} (\theta_j(s,t) \theta_k(s,t))^{\mu_{jk}}. 
\end{equation}
Indeed, looking at the exponents, the equality translates immediately into the existence of a non-negative solution $(\mu_{jk})_{1\leq j < k \leq \ell}$ to the linear equations\footnote{Notice that the resulting system of equations has $\ell(\ell-1)/2$ variables and $\ell$ equations, and is therefore severely underdetermined.}
\begin{equation}\label{eq:system_positive_mu_jk}
L_j : \quad \sum_{i:\; i < j} \mu_{ij} + \sum_{k:\; k>j} \mu_{jk} = m_j, \qquad j \in \{1,\ldots, \ell\}. 
\end{equation}
In order to treat such a system of linear equations, we recall the following fundamental linear programming lemma. For convenience, given a vector $\bm{v}$ we write $\bm{v} \geq 0$ to denote the fact that all components of $\bm{v}$ are non-negative. 
\begin{lemma}[Farkas' lemma; \cite{Rockafellar}]\label{lemma:Farkas}
If $M$ is an $m \times n$ real matrix and $\bm{b} \in \mathbb{R}^m$, then exactly one of the following mutually exclusive cases holds:
\begin{enumerate}[(i)]
\item there exists $\bm{x} \in \mathbb{R}^n$ such that $M\bm{x} = \bm{b}$ with $\bm{x}\geq 0$, or
\item\label{item:Farkas_item_2} there exists $\bm{y} \in \mathbb{R}^m$ such that $M^\top \bm{y} \geq 0$ and $\bm{b} \cdot \bm{y} < 0$. 
\end{enumerate}
\end{lemma}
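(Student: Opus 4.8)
The plan is to deduce Farkas' lemma from the Hyperplane Separation Theorem for closed convex sets, together with the standard (but genuinely non-trivial) fact that a finitely generated convex cone in $\mathbb{R}^m$ is closed.

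First I would dispatch mutual exclusivity, which is immediate: if $\bm{x} \geq 0$ satisfies $M\bm{x} = \bm{b}$ and simultaneously $\bm{y}$ satisfies $M^\top \bm{y} \geq 0$, then $\bm{b}\cdot\bm{y} = (M\bm{x})\cdot\bm{y} = \bm{x}\cdot(M^\top\bm{y}) \geq 0$, contradicting $\bm{b}\cdot\bm{y} < 0$; so at most one of the two cases can occur.

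For the substance — that at least one case occurs — write $M_1,\ldots,M_n$ for the columns of $M$ and let $C := \{\, M\bm{x} : \bm{x}\in\mathbb{R}^n,\ \bm{x}\geq 0 \,\} = \{\, \textstyle\sum_{j} x_j M_j : x_j \geq 0 \,\}$ be the convex cone they generate in $\mathbb{R}^m$. If $\bm{b}\in C$, then case (i) holds and we are finished, so assume $\bm{b}\notin C$. Granting that $C$ is closed (it is obviously convex and contains $0$), apply the Hyperplane Separation Theorem to the point $\bm{b}$ and the closed convex set $C$: this produces $\bm{y}\in\mathbb{R}^m$ and $\gamma\in\mathbb{R}$ with $\bm{y}\cdot\bm{c} \geq \gamma > \bm{y}\cdot\bm{b}$ for all $\bm{c}\in C$. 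Since $0\in C$ we get $\gamma\leq 0$, whence $\bm{b}\cdot\bm{y} < 0$; and since $C$ is a cone, $\lambda\bm{c}\in C$ for every $\lambda>0$, so $\lambda\,(\bm{y}\cdot\bm{c}) \geq \gamma$ for all $\lambda>0$ forces $\bm{y}\cdot\bm{c}\geq 0$ on all of $C$. Taking $\bm{c} = M_j$ for each $j$ gives $M^\top\bm{y}\geq 0$, which is case (ii).

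The only real obstacle is the closedness of the finitely generated cone $C$, and I would prove it by a Carath\'eodory-type reduction: any $\bm{v}\in C$ admitting a non-negative representation $\bm{v} = \sum_j x_j M_j$ for which $\{M_j : x_j > 0\}$ is linearly \emph{dependent} can be rewritten with one fewer positive coefficient (solve a dependence relation and slide along it until a coefficient hits $0$), so $C = \bigcup_{S} C_S$ where $S$ runs over the finitely many subsets of $\{1,\ldots,n\}$ with $\{M_j\}_{j\in S}$ linearly independent and $C_S := \{\sum_{j\in S} x_j M_j : x_j \geq 0\}$; each $C_S$ is the image of the closed orthant $[0,\infty)^{|S|}$ under a linear injection, which is proper (bounded below, $|L\bm{u}|\gtrsim_{L}|\bm{u}|$) and hence maps closed sets to closed sets, so $C$ is a finite union of closed sets. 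An alternative that avoids topology entirely is Fourier--Motzkin elimination on the system $M\bm{x}=\bm{b},\ \bm{x}\geq 0$: eliminating the variables one at a time yields, at each stage, an equivalent finite system of linear inequalities with non-negative combining multipliers, terminating either with an explicit non-negative solution (case (i)) or with a manifestly inconsistent inequality whose accumulated multipliers assemble into the vector $\bm{y}$ of case (ii); I would mention this as a remark, since it matches the combinatorial flavour of the rest of the section.
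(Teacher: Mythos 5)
Your proof is correct: mutual exclusivity via $\bm{b}\cdot\bm{y} = \bm{x}\cdot(M^\top\bm{y}) \geq 0$, the separation argument (using $0\in C$ and the cone property to turn the affine separating functional into $\bm{y}$ with $M^\top\bm{y}\geq 0$, $\bm{b}\cdot\bm{y}<0$), and the Carath\'eodory-type reduction establishing closedness of the finitely generated cone are all sound. Note, though, that the paper does not prove this lemma at all --- it is quoted from Rockafellar, with only a remark explaining the geometric content, namely exactly the separation picture you use: either $\bm{b}\in M\Gamma_+$ or the two can be separated by a hyperplane. So your route coincides with the paper's intended picture, but you add real value on the one point the paper's remark glosses over: the remark asserts that $M\Gamma_+$ is closed ``because'' $\Gamma_+$ is closed, which is not a valid implication for linear images of general closed sets; it is true here precisely because the cone is finitely generated, and your decomposition $C=\bigcup_S C_S$ over linearly independent subsets $S$, with each $C_S$ a proper linear image of an orthant, is the standard correct justification. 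The Fourier--Motzkin alternative you mention would give a purely combinatorial proof avoiding the topological issue altogether; either version is a complete substitute for the citation, whereas the paper deliberately leaves the proof to the reference.
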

\begin{remark}
The statement might appear somewhat cryptic at first, but the geometric content is actually elementary: if we let $\Gamma_{+} := \{ \bm{x} \in \mathbb{R}^n : \bm{x} \geq 0\}$, we observe that $\Gamma_{+}$ is a closed convex cone and therefore so is $M \Gamma_{+}$; then Farkas' lemma simply states that either $\bm{b}$ belongs to $M \Gamma_{+}$ or not, in which case the two can be separated by a hyperplane ($\bm{y}$ is an element orthogonal to this hyperplane and on the opposite side to $\bm{b}$).
\end{remark}
We will show that case \eqref{item:Farkas_item_2} of Lemma \ref{lemma:Farkas} is impossible in our situation (in which $\bm{b} = (m_1, \ldots, m_\ell)$ and $M$ can be read off of the system of equations \eqref{eq:system_positive_mu_jk}), and thus the desired $(\mu_{jk})_{j<k}$ exist. Assume by contradiction that there is such a vector $\bm{y} = (y_1,\ldots, y_\ell)$ as in case \eqref{item:Farkas_item_2}. Inspecting the system \eqref{eq:system_positive_mu_jk} we see that the condition $M^\top \bm{y} \geq 0$ translates into the system of inequalities
\begin{equation} \label{eq:Farkas_inequalities_yj_yk}
y_j + y_k \geq 0 
\end{equation}
for all $1 \leq j < k \leq \ell$ (indeed, observe that each variable $\mu_{jk}$ appears only in equations $L_j$ and $L_k$, always with coefficient $+1$); the condition $\bm{b} \cdot \bm{y} < 0$ is simply the statement that 
\[ y_1 m_1 + \ldots + y_\ell m_\ell < 0. \]
On the one hand, since the $m_j$ are all positive, from the last inequality we see that at least one of the $y_j$ must be negative. On the other hand, from inequalities \eqref{eq:Farkas_inequalities_yj_yk} we see that there can be at most a single index $j_{\ast}$ such that $y_{j_{\ast}} < 0$ and that all other $y_j$ must be strictly positive instead; in particular, $y_j \geq |y_{j_{\ast}}| >0$. But then we have 
\[ \sum_{j \neq j_{\ast}} m_j \leq \sum_{j \neq j_{\ast}} \frac{y_j}{|y_{j_{\ast}}|} m_j < m_{j_{\ast}}, \]
and this implies that $m_{j_{\ast}} > d/2$, which is a contradiction.\par 
The above has shown that the desired structural factorisation of $P$ can be achieved -- and notice in particular that we have necessarily $\sum_{j<k} \mu_{jk} = d/2$. Now consider only those indices $j,k$ such that $\mu_{jk} > 0$. By the pigeonhole principle and factorisation \eqref{eq:alternative_factorisation} we have that if $|P(s,t)|\leq \delta$ then for at least one pair of indices $j < k$ we have 
\[ |\theta_j(s,t)\theta_k(s,t)|^{\mu_{jk}} \lesssim_P \,\delta^{2\mu_{jk}/d}; \]
it follows that $|\{ s,t : |s|,|t|\lesssim 1, |P(s,t)|\leq \delta\}|$ is dominated by the sum in indices $j<k$ of 
\[ |\{ s,t : |s|,|t|\lesssim 1, |\theta_j(s,t)\theta_k(s,t)|^{\mu_{jk}} \lesssim_P \, \delta^{2\mu_{jk}/d} \}|. \]
However, since the $\theta_j$'s are normalised and distinct, they are linearly independent in pairs; by Lemma \ref{lemma:sublevel_set_estimate_two_factors} this measure is dominated by $\lesssim_P \, (\delta^{2\mu_{jk}/d})^{1/\mu_{jk}}\log^{+} (1/\delta^{2\mu_{jk}/d}) \allowbreak \sim \delta^{2/d} \log^{+} 1/\delta$, and we are done.  
\end{proof}
The proof of \eqref{eq:sublevel_set_est_flat} follows similar lines but is much simpler.
\begin{proof}[Proof of Proposition \ref{prop:sublevel_set_estimate_2}]
As in the proof of \eqref{eq:sublevel_set_est_well_curved}, we can factorise $P$ as 
\[ P(s,t) = C \theta_{\ast}(s,t)^{m_{\ast}} \prod_{j=1}^{\ell} \theta_{j}(s,t)^{m_j}, \]
where $m_{\ast} > d/2$ is the largest multiplicity and $\theta_{\ast}, \theta_1, \ldots, \theta_{\ell}$ are linearly independent linear forms. Since $m_{\ast} > \sum_{j=1}^{\ell} m_j$, we can find $\mu_{j}$ such that $\mu_j > m_j$ and $\sum_{j=1}^{\ell} \mu_j = m_{\ast}$; as a consequence, we can rearrange the factorisation of $P$ as 
\[ P(s,t) = C \prod_{j=1}^{\ell} ( \theta_{\ast}(s,t)^{\mu_j} \theta_j(s,t)^{m_j}). \]
By the pigeonhole principle, if $|P(s,t)| < \delta$ then for at least one index $j$ we have 
\[ |\theta_{\ast}(s,t)^{\mu_j} \theta_j(s,t)^{m_j}| \lesssim_P \delta^{\mu_j / m_{\ast}}; \]
therefore the sublevel set $\{s,t : |s|,|t|\lesssim 1, |P(s,t)|< \delta\}$ is contained in the union over $j$ of sublevel sets 
\[ \{ s,t : |s|,|t| \lesssim 1, |\theta_{\ast}(s,t)^{\mu_j} \theta_j(s,t)^{m_j}| \lesssim_P \delta^{\mu_j / m_{\ast}} \}. \]
By Lemma \ref{lemma:sublevel_set_estimate_two_factors}, each of these has measure $\lesssim_P (\delta^{\mu_j / m_{\ast}})^{1/ \max\{\mu_j, m_j\}} = \delta^{1/{m_{\ast}}}$, and thus the proof is concluded.
\end{proof}
\begin{remark}\label{remark:surfaces_without_log_loss}
While it is not possible in general to remove the logarithmic loss in \eqref{eq:sublevel_set_est_well_curved} even in the case of polynomials $P(s,t) = \det(sA + tB)$, the class of polynomials for which we incur such a loss can be narrowed down significantly. Indeed, with a more precise argument (such as e.g.\@ the aforementioned integration argument in polar coordinates using Corollary 3.4 of \cite{IkromovMuller}) one incurs logarithmic losses only when the polynomial $P$ has a root of multiplicity exactly equal to $d/2$. It follows that for well-curved surfaces $\Sigma(Q_1,Q_2)$ we can always obtain the restricted weak-type endpoint $L^{(d+4)/4} \to L^{(d+4)/2}$, provided all the roots have multiplicity strictly smaller than $d/2$. In particular, one recovers in these cases the critical line that is missing from the statement of Theorem \ref{main_theorem}.
\end{remark}
\section{Flat surfaces}\label{section:flat_surfaces}
In this final section we will give counterexamples that show the necessity of the curvature assumptions of Theorems \ref{main_theorem} and \ref{thm:flat_surfaces}. More specifically, for flat $\Sigma(Q_1,Q_2)$ surfaces:
\begin{itemize}
\item We will show that if $\det(s \nabla^2 Q_1 + t \nabla^2 Q_2)$ does not vanish identically but has a root of multiplicity $m_{\ast} >d/2$, then for any $(p,q)$ sufficiently close to the endpoint $\big(\tfrac{d+4}{4}, \tfrac{d+4}{2}\big)$ the $L^p \to L^q$ estimate for operator $\mathcal{T}$ given by \eqref{eq:definition_restricted_2-plane_transform} is false; in particular, we will show that any estimate with $2/q = 1/p$ and $q < m_{\ast} + 2$ is false.
\item We will show that if $\det(s \nabla^2 Q_1 + t \nabla^2 Q_2)$ vanishes identically then any estimate with $2/q = 1/p$ is false (except for $p = q = \infty$); more in general, we will rule out every estimate for which $(2 - \epsilon)/q < 1/p$ for some $\epsilon > 0$ (this range intersects non-trivially the conjectural non-mixed range given by \eqref{eq:mixed_necessary_conditions}).
\end{itemize}
We will deal with each case in a separate subsection. Once again we resort to writing $A,B$ for $\nabla^2 Q_1, \nabla^2 Q_2$.
\subsection{Case I: $\det(sA+tB)$ is not identically vanishing}
In order to allow for a cleaner argument, we begin by making some reductions that are entirely analogous to those operated in Section \ref{section:semistability}; some care is needed because of the local nature of $\mathcal{T}$. For added precision, we introduce operators 
\[ \mathcal{T}_{\Omega}^{A,B}f(x,\xi) := \iint_{\Omega} f(x - (sA + tB)\xi, s,t) \,ds\,dt, \]
in which the subscript $\Omega$ specifies the integration domain; thus for the operator given by \eqref{eq:definition_restricted_2-plane_transform} we have $\mathcal{T} = \mathcal{T}_{[-1,1]^2}^{A,B}$.\par  First of all, we claim that we can assume that $B$ is invertible. Indeed, otherwise there exists some $\tau_0$ such that $B_0 := -A - \tau_0 B$ is invertible, and we can write 
\[ sA + tB = s' A_0 + t' B_0 \]
for $A_0 := B$ and $s',t'$ given by 
\[ \begin{pmatrix} s' \\ t' \end{pmatrix} = N \begin{pmatrix}
s \\ t
\end{pmatrix}, \qquad N = \begin{pmatrix}
-\tau_0 & 1 \\ -1 & 0
\end{pmatrix} \in SL(\mathbb{R}^2). \]
If for any function $f$ we let $f_{\tau_0}(y,s,t):= f(y, t - s \tau_0, -s)$, we see by a change of variables that 
\[ \mathcal{T}_{[-1,1]^2}^{A,B} f_{\tau_0} = \mathcal{T}_{N([-1,1]^2)}^{A_0,B_0} f; \]
therefore it will suffice to show that $\mathcal{T}_{N([-1,1]^2)}^{A_0,B_0}$ is unbounded, where now $B_0$ is invertible. Notice that since the operators are positive it will suffice to show that $\mathcal{T}_{[-\epsilon,\epsilon]^2}^{A_0,B_0}$ is unbounded for some $\epsilon > 0$ such that $[-\epsilon,\epsilon]^2 \subset N([-1,1]^2)$; by a rescaling, it then suffices to show that $\mathcal{T}_{[-1,1]^2}^{\epsilon A_0, \epsilon B_0}$ is unbounded.\par 
Assuming then that $B$ is invertible, we further claim that we can assume that $(A,B)$ is in the form $(\widetilde{\bm{J}}, \widetilde{\bm{I}})$ given by \eqref{eq:special_form_A_B} of Section \ref{section:semistability}. Indeed, using the notation of that section, we see that 
\begin{align*}
sA + tB &= (s AB^{-1} + t I) B = (s Q J Q^{-1} + t I)B \\
&= Q(s \widetilde{\bm{J}} \widetilde{\bm{I}} + t \widetilde{\bm{I}}^2) Q^{-1} B = Q(s \widetilde{\bm{J}} + t \widetilde{\bm{I}}) \widetilde{\bm{I}} Q^{-1} B
\end{align*}
(recall that $Q$ is an invertible matrix such that $Q^{-1}AB^{-1}Q$ is in Jordan normal form). If for any function $f$ we let $f_{Q^{-1}}(y,s,t) := f(Q^{-1} y, s,t)$, we see by a straightforward calculation that 
\[ \mathcal{T}_{[-1,1]^2}^{A,B} f_{Q^{-1}} (Qx, B^{-1} Q\widetilde{\bm{I}} \xi) = \mathcal{T}_{[-1,1]^2}^{\widetilde{\bm{J}}, \widetilde{\bm{I}}} f(x,\xi). \]
As a consequence, it will suffice to show that $\mathcal{T}_{[-1,1]^2}^{\widetilde{\bm{J}}, \widetilde{\bm{I}}}$ is unbounded from $L^p(B(0,C)\times [-1,1]^2)$ to $L^q(\mathbb{R}^d \times [-\epsilon',\epsilon']^d)$, where $\epsilon'>0$ is chosen sufficiently small to ensure $B^{-1} Q \widetilde{\bm{I}}([-\epsilon',\epsilon']^d) \allowbreak \subset [-1,1]^d$.\par
Finally, assuming that the matrices are of the form $(\widetilde{\bm{J}}, \widetilde{\bm{I}})$, we can further assume that the eigenvalue of $\widetilde{\bm{J}}$ of highest multiplicity is $\lambda_{\ast} = 0$: this can be achieved by a repetition of the argument given to show that we could assume $B$ to be invertible, and thus we omit the details. Associated to eigenvalue $0$ we have the generalised eigenspaces of $\widetilde{\bm{J}}$: let $V_0$ be the span of all the generalised eigenspaces of dimension $1$, and let $V_1, \ldots, V_\ell$ be the generalised eigenspaces of dimension larger than $1$. For $j \in \{0,\ldots, \ell\}$ we let $\bm{e}^{(j)}_{1}, \ldots, \bm{e}^{(j)}_{n_j}$ be the generalised eigenvectors that span $V_j$, where $n_j := \dim V_j$. Moreover, we let $W$ denote the span of the generalised eigenspaces of non-zero eigenvalue -- thus $\mathbb{R}^d = V_0 \oplus \ldots \oplus V_{\ell} \oplus W$. In the resulting basis of generalised eigenvectors the matrix $\widetilde{\bm{J}}$ has the form
\[ \widetilde{\bm{J}} = \begin{pmatrix}
0 & & & & & & & & \\
 & \ddots & & & & & & & \\
 & & 0 & & & & & & \\
 & & & \widetilde{J}_{n_1}(0) & & & & & \\
 & & & & \ddots & & & & \\
 & & & & & \widetilde{J}_{n_\ell}(0) & & & \\
 & & & & & & \bm{\ast} & \cdots & \bm{\ast} \\
 & & & & & & \vdots & \ddots & \vdots \\
  & & & & & & \bm{\ast} & \cdots & \bm{\ast} 
\end{pmatrix}, \]
where $\widetilde{J}_{r}(0)$ is given by \eqref{eq:antisymmetric_Jordan_blocks}; matrix $\widetilde{\bm{I}}$ has analogous form but $\widetilde{J}_r(0)$ is replaced by $\widetilde{I}_r$ (also given by \eqref{eq:antisymmetric_Jordan_blocks}). In particular, we have
\begin{equation}\label{eq:J_I_acting_on_V_0} \widetilde{\bm{J}} \bm{e}^{(0)}_k = 0, \qquad \widetilde{\bm{I}}\bm{e}^{(0)}_k = \bm{e}^{(0)}_k 
\end{equation}
for every $k \leq n_0$, and for $1 \leq j \leq \ell$
\begin{equation}\label{eq:J_I_acting_on_V_j}
\begin{aligned}
\widetilde{\bm{J}}\bm{e}^{(j)}_k &= \bm{e}^{(j)}_{n_j - k}, \\
 \widetilde{\bm{J}}\bm{e}^{(j)}_{n_j} &=0,
 \end{aligned} 
 \quad 
 \begin{aligned}
 \widetilde{\bm{I}}\bm{e}^{(j)}_k &= \bm{e}^{(j)}_{n_j +1 - k}, \quad \text{ for } 1 \leq k \leq n_j - 1, \\
 \widetilde{\bm{I}}\bm{e}^{(j)}_{n_j} &= \bm{e}^{(j)}_{1}. 
 \end{aligned}
\end{equation} \par 
We introduce two types of parabolic boxes adapted to the generalised eigenspaces: for any $j \in \{0,\ldots, \ell\}$ and $\delta > 0$ (an arbitrarily small parameter) we let 
\[ R(\delta,V_j) := \Big\{ \sum_{k=1}^{n_j} \alpha^{(j)}_k \delta^{n_j - k} \bm{e}^{(j)}_k : |\alpha^{(j)}_k| < \epsilon' \text{ for all } k \Big\}, \]
and for $\epsilon >0$ we let also
\[ \widetilde{R}(\delta, \epsilon, V_j) := \Big\{ \sum_{k=1}^{n_j} \beta^{(j)}_k \delta^{k} \bm{e}^{(j)}_k : |\beta^{(j)}_k| < \epsilon \text{ for all } k \Big\} \]
(notice how $R(\delta, V_j)$ is a parabolic box of dimensions $\sim \delta^{n_j - 1} \times \ldots \times \delta \times 1$, whereas $\widetilde{R}(\delta, \epsilon, V_j)$ is a parabolic box of dimensions $\sim \delta \times \delta^2 \times  \ldots \times \delta^{n_j}$). Consider now parameters $(s,t)$ restricted to the strip 
\[ S_{\delta} := \{ (s,t) \in [-1,1]^2 : |t|<\delta\} \]
and let us study how $s \widetilde{\bm{J}} + t \widetilde{\bm{I}}$ acts on the parabolic boxes. If $\bm{v} \in R(1,V_0)$ we have $\bm{v} = \sum_{k=1}^{n_0} \alpha^{(0)}_k  \bm{e}^{(0)}_k$ and thus by \eqref{eq:J_I_acting_on_V_0}
\[ (s \widetilde{\bm{J}} + t \widetilde{\bm{I}})\bm{v} = \sum_{k=1}^{n_0} t \alpha^{(0)}_{k}  \bm{e}^{(0)}_k; \]
as a consequence, we have $(s \widetilde{\bm{J}} + t \widetilde{\bm{I}}) R(1, V_0) \subset \widetilde{R}(1,\delta \epsilon', V_0)$. If $\bm{v} \in R(\delta, V_j)$ for $1 \leq j \leq \ell$ we have $\bm{v} = \sum_{k=1}^{n_j} \alpha^{(j)}_k \delta^{n_j - k} \bm{e}^{(j)}_k$ and thus by \eqref{eq:J_I_acting_on_V_j}
\[ (s \widetilde{\bm{J}} + t \widetilde{\bm{I}})\bm{v} = t \alpha^{(j)}_1 \delta^{n_j - 1} \bm{e}^{(j)}_{n_j} + \sum_{k=1}^{n_j - 1} (s \alpha^{(j)}_{n_j - k} \delta + t \alpha^{(j)}_{n_j + 1 - k}) \delta^{k-1} \bm{e}^{(j)}_k; \]
therefore $(s \widetilde{\bm{J}} + t \widetilde{\bm{I}}) R(\delta, V_j) \subset \widetilde{R}(\delta, 2\epsilon', V_j)$. Such inclusions have the following consequences: define (with a little abuse of notation) subsets of $\mathbb{R}^d$
\begin{align*}
E_{\delta} &:= R(1,V_0) \times \Big(\prod_{j=1}^{\ell} R(\delta, V_j)\Big) \times \{ \bm{w} \in W : \|\bm{w}\|_{\ell^{\infty}} < \epsilon'\}, \\
F_{\delta} &:= \widetilde{R}(1,\delta\epsilon',V_0) \times \Big( \prod_{j=1}^{\ell} \widetilde{R}(\delta, 2\epsilon',V_j)\Big)\times \{\bm{w} \in W : \|\bm{w}\|_{\ell^{\infty}} \lesssim_{\widetilde{\bm{J}},\widetilde{\bm{I}}} \epsilon'\};
\end{align*}
then we have $(s \widetilde{\bm{J}} + t \widetilde{\bm{I}}) E_\delta \subset F_\delta$ and $F_\delta - F_\delta \subset 2F_{\delta}$, which in particular implies 
\[ \mathcal{T} \mathbf{1}_{2F_{\delta} \times S_{\delta}} \geq |S_{\delta}| \mathbf{1}_{F_{\delta} \times E_{\delta}} \]
(where we wrote $\mathcal{T}$ for $\mathcal{T}_{[-1,1]^2}^{\widetilde{\bm{J}},\widetilde{\bm{I}}}$ to ease the notation a little). If $\mathcal{T}$ were $L^p \to L^q$ bounded, the last inequality would imply (with some rearranging)
\[ |S_{\delta}|^{1/{p'}} |E_{\delta}|^{1/q} \lesssim |F_{\delta}|^{1/p - 1/q}. \]
However, it is easy to see that in terms of $\delta$
\[ |S_{\delta}| \sim \delta, \quad |E_{\delta}| \sim \delta^{\sum_{j=1}^{\ell} n_j(n_j - 1)/2}, \quad |F_{\delta}| \sim \delta^{n_0 + \sum_{j=1}^{\ell} n_j(n_j + 1)/2}, \]
and letting $\delta \to 0$ we obtain the necessary condition (after further rearranging)
\begin{equation}\label{eq:flat_surfaces_necessary_condition_1}
1 + \Big(n_0 + \sum_{j=1}^{\ell} n_j^2\Big) \frac{1}{q} \geq \Big( 1 + n_0 + \sum_{j=1}^{\ell} \frac{n_j(n_j + 1)}{2} \Big) \frac{1}{p}. 
\end{equation}
Observe that $m_{\ast} = n_0 + \sum_{j=1}^{\ell} n_j$, so that if we restrict ourselves to exponents such that $2/q = 1/p$ we see with some algebra that \eqref{eq:flat_surfaces_necessary_condition_1} yields the same set of exponents as the condition 
\[ 1 + \frac{m_{\ast}}{q} \geq \frac{m_{\ast} + 1}{p} \]
stated in Theorem \ref{thm:flat_surfaces}. On the other hand, the general condition excludes a range of exponents beyond those strictly on the critical line $2/q = 1/p$, as illustrated in Figure \ref{figure:flat_surfaces_range}. The figure also illustrates that the reduced range provided by \eqref{eq:flat_surfaces_necessary_condition_1} does not quite coincide with the range of true estimates afforded by Theorem \ref{thm:flat_surfaces}; notice however that the two ranges coincide when $m_{\ast} = n_0$, that is, when the generalised eigenspaces of eigenvalue $\lambda_{\ast}$ are all of dimension $1$ (Theorem \ref{thm:flat_surfaces} is then sharp in such cases, save perhaps for the endpoint).
%
%%%%%%%%%%%%%%%%%%%%
% FIGURE - Riesz diagram
%%%%%%%
%
\begin{figure}[ht]
\centering
\begin{tikzpicture}[line cap=round,line join=round,>=Stealth,x=1cm,y=1cm, scale=7]
\clip(-0.15,-0.1) rectangle (1.55,1.1);
\draw [->,line width=0.5pt] (0,-0.06) -- (0,1.1);
\draw [->,line width=0.5pt] (-0.06,0) -- (1.1,0);
\draw [line width=0.5pt] (-0.02,1)-- (1,1);
\draw [line width=0.5pt,domain=-0.1:1.2] plot(\x,{(-0--1*\x)/2});
\draw [line width=0.5pt,domain=-0.1:1.65] plot(\x,{(-2--7*\x)/5});
\draw [line width=0.5pt,domain=-0.1:0.95] plot(\x,{(-1--8*\x)/10});
\draw [line width=0.5pt] (1/3,1/6)-- (1,1);
\draw [line width=0.5pt] (1,-0.02)-- (1,0.02);
\draw (0.4,0.27) node[anchor=south east] {$\Big(\frac{2}{m_{\ast} + 2}, \frac{1}{m_{\ast}+2}\Big)$};
\draw (1.05,0.84) node[anchor=north west] {$2+\frac{d}{q} = \frac{d+2}{p}$};
\draw (0.48,0.21) node[anchor=north west] {$1 + \Big(n_0 + \sum_{j} n_j^2\Big)\frac{1}{q} = \Big(1 + n_0 + \sum_{j} \frac{n_j(n_j+1)}{2}\Big)\frac{1}{p}$};
\draw (0.532,-0.015) node[anchor=north west] {$1/p$};
\draw (-0.13,0.6) node[anchor=north west] {$1/q$};
\draw (0.95,0.36) node[anchor=north west] {$\frac{2}{q} = \frac{1}{p}$};
\draw (0.5,0.45) node[anchor=south east] {$\Big(\frac{4}{d+4}, \frac{2}{d+4}\Big)$};
\draw (-0.08,1.035) node[anchor=north west] {$1$};
\draw (0.97,-0.04) node[anchor=north west] {$1$};
\begin{scriptsize}
\draw [fill=black] (4/9,2/9) circle (0.2pt);
\draw [fill=black] (1/3,1/6) circle (0.2pt);
\fill[fill=black,fill opacity=0.1] (0,0) -- (1/3,1/6) -- (1,1) -- (0,1) -- cycle;
\draw [->,line width=0.5pt] (0.64,0.2) -- (0.58,0.364);
\draw [->,line width=0.5pt] (0.94,0.31) -- (0.8,0.4);
\draw [->,line width=0.5pt] (1.05,0.78) -- (0.76,0.664);
\draw [->,line width=0.5pt] (0.24,0.276) -- (0.3295,0.168);
\draw [->,line width=0.5pt] (0.385,0.45) -- (0.4435,0.225);
\end{scriptsize}
\end{tikzpicture}
\caption{\footnotesize The shaded area corresponds to the range of boundedness afforded by Theorem \ref{thm:flat_surfaces}, that is, when the surface $\Sigma(Q_1,Q_2)$ is flat but $\det(s\nabla^2 Q_1 + t \nabla^2 Q_2)$ does not vanish identically. The critical lines given by \eqref{eq:mixed_necessary_conditions} and \eqref{eq:flat_surfaces_necessary_condition_1} are indicated: as one can see, the range of Theorem \ref{thm:flat_surfaces} is sharp when $2/q = 1/p$. The endpoint $\big(\tfrac{4}{d+4}, \tfrac{2}{d+4}\big)$ for the well-curved case is also indicated, and one can see that for these surfaces all $L^p \to L^q$ estimates for $(1/p,1/q)$ close to this endpoint are false.} \label{figure:flat_surfaces_range}
\end{figure}
\subsection{Case II: $\det(sA+tB)$ vanishes identically}
We consider first the case in which $\ker (s_1 A + t_1 B) \cap \ker (s_2 A + t_2 B) = \{0\}$ for any linearly independent $(s_1,t_1), (s_2, t_2)$ (equivalently, $\ker A \cap \ker B = \{0\}$). As in Section \ref{section:semistability_vanishing_determinant}, we can locate a maximal non-vanishing minor $\det_{I_{\ast}, J_{\ast}}(sA + tB)$ (where $I_{\ast},J_{\ast} \subset \{1,\ldots, d\}$ and $|I_{\ast}|=|J_{\ast}|$) and use it to define the set of generic $(s,t)$:
\[ \mathscr{G} := \{ (s,t) \in \mathbb{R}^2 : \det\nolimits_{I_{\ast}, J_{\ast}}(sA + tB) \neq 0 \} \]
(notice that, unlike in Section \ref{section:semistability_vanishing_determinant}, we are considering real parameters only). Observe that we can find a set $S \subset [-1,1]^2 \cap \mathscr{G}$ such that $|S|>1/2$, since $\mathscr{G}$ is simply $\mathbb{R}^2$ with some lines removed. We define then the subspace of $\mathbb{R}^d$
\[ V := \operatorname{Span}\Big\{ \bigcup_{(s,t) \in \mathscr{G}} \ker (sA + tB) \Big\}; \]
by the same arguments given in Section \ref{section:semistability_vanishing_determinant} we have that for every $(s,t) \in \mathscr{G}$ the image $(sA + tB)V$ consists of a common subspace $H$, which is a strict subspace of $V$. As a consequence, if $\xi \in \mathcal{N}_{\delta}(V)$ (the $\delta$-neighbourhood of $V$) we see that for $(s,t) \in S$ we have $(sA + tB)\xi \in \mathcal{N}_{K \delta}(H)$, where $K := \|A\|+ \|B\|$. Define then sets 
\begin{align*}
E_{\delta} &:= \mathcal{N}_{\delta}(V) \cap [-1,1]^d, \\
F_{\delta} &:= \mathcal{N}_{K \delta}(H) \cap [-K,K]^d;
\end{align*}
by the discussion above we have that
\[ \mathcal{T} \mathbf{1}_{2F_{\delta} \times S} \gtrsim \mathbf{1}_{F_{\delta} \times E_{\delta}}, \]
and therefore if $\mathcal{T}$ is $L^p \to L^q$ bounded we have from the last inequality (after some rearranging)
\[ |E_{\delta}|^{1/q} \lesssim |F_{\delta}|^{1/p - 1/q}. \]
It is easy to see that 
\[ |E_{\delta}| \sim \delta^{d - \dim V}, \quad |F_{\delta}| \sim \delta^{d - \dim H}, \]
so that letting $\delta \to 0$ we obtain the necessary condition
\[ \frac{d - \dim V}{q} \geq (d - \dim H)\Big(\frac{1}{p} - \frac{1}{q}\Big), \]
which after some rearranging is rewritten as 
\[ \Big( 2 - \frac{\dim V - \dim H}{d - \dim H}\Big) \frac{1}{q} \geq \frac{1}{p}, \]
as claimed in Theorem \ref{thm:flat_surfaces}. Since $\dim V > \dim H$, the condition shows that every $L^p \to L^q$ estimate with $2/q = 1/p$ is false in this case (with the exclusion of $(p,q) = (\infty, \infty)$). \par
It remains to treat the case in which $\ker A \cap \ker B \neq \{ 0\}$, in which case $\mathcal{T}$ does not satisfy any non-trivial estimate. Indeed, there exists a strict subspace $W \subsetneq \mathbb{R}^d$ such that $(sA + tB) \mathbb{R}^d \subset W$ for all $(s,t)$. If we let 
\[ F_{\delta} := \mathcal{N}_{\delta}(W) \cap [-K,K]^d \] 
we see easily that 
\[ \mathcal{T} \mathbf{1}_{2F_{\delta} \times [-1,1]^2} \geq \mathbf{1}_{F_{\delta} \times [-1,1]^d}; \]
if $\mathcal{T}$ is $L^p \to L^q$ bounded we have then
\[ |F_{\delta}|^{1/q} \lesssim |F_{\delta}|^{1/p}, \]
and since $|F_{\delta}| \sim \delta^{d - \dim W}$ it is immediate to deduce the necessary condition $1/q \geq 1/p$. Thus every estimate beyond those obtained from interpolation of the trivial estimates of Remark \ref{remark:trivial_estimates} is false.
\appendix
\section{General well-curved surfaces}\label{appendix:modification_general_surface}
In this appendix we sketch the modifications of the arguments presented in this paper that allow to extend Theorem \ref{main_theorem} to Theorem \ref{thm:general_well_curved_surfaces}, that is, to general well-curved surfaces $\Sigma(\varphi_1,\varphi_2)$ of the form 
\[ (\xi, \varphi_1(\xi), \varphi_2(\xi)), \qquad \xi \in [-\epsilon, \epsilon]^d, \]
where $\varphi_1,\varphi_2$ are $C^2$ functions such that $\nabla \varphi_1(0) = \nabla \varphi_2(0) = 0$, and $\epsilon$ will be taken sufficiently small depending on $\varphi_1,\varphi_2$. We will borrow heavily from other sections and their notations to keep the appendix short.\par 
The first observation is that if $\Sigma(\varphi_1,\varphi_2)$ is well-curved at $\xi = 0$ then it is well-curved in a neighbourhood of $0$ as well. Indeed, this is a consequence of the fact that condition \eqref{condition:well-curvedness} is stable under small perturbations: observe that the coefficients of the polynomial $\det(s \nabla^2\varphi_1(\xi) + t \nabla^2 \varphi_2(\xi))$ are continuous functions of $\xi$. It is well-known that the roots of a univariate polynomial are continuous functions of its coefficients, and it is not hard to see that this fact extends to homogeneous polynomials of two variables (for example, by passing to the projectivisation). Thus the roots  of $\det(s \nabla^2\varphi_1(\xi) + t \nabla^2 \varphi_2(\xi))$ are continuous functions of $\xi$ and we see that if \eqref{condition:well-curvedness} is satisfied at $\xi = 0$ then it is satisfied for $\xi \in [-\epsilon,\epsilon]^d$ for some $\epsilon >0$ (this is because the maximal algebraic multiplicity of the roots of $\det(sA+tB)$ is an upper semicontinuous function of the matrices $A,B$).\par 
The bulk of the argument of Section \ref{section:main_proof} goes through without major changes: in particular, the Jacobian determinant of the map $\Psi$ is still given by \eqref{eq:Jacobian} -- that is, by $\det((s-s_0) \nabla^2\varphi_1(\eta) + (t-t_0) \nabla^2 \varphi_2(\eta))$, which unlike  the quadratic case is now a function of $\eta$ too. For $\epsilon$ sufficiently small, the multiplicity $\mu_{\Psi}$ of the map $\Psi$ is still $1$. Indeed, we see that $\Psi(\eta,s,t) = \Psi(\eta',s',t')$ only if $s=s', t=t'$ and 
\[ \hat{s}(\nabla \varphi_1(\eta) - \nabla \varphi_1(\eta')) + \hat{t} (\nabla \varphi_2(\eta) - \nabla \varphi_2(\eta')) = 0 \]
(where $\hat{s} = s - s_0$, $\hat{t} = t - t_0$ for shortness); this can be rewritten as 
\[ \Big( \int_{0}^{1} [\hat{s} \nabla^2 \varphi_1 + \hat{t} \nabla^2 \varphi_2](\theta \eta + (1 - \theta)\eta') \,d\theta\Big) (\eta - \eta') = 0, \]
so that the matrix in brackets must have determinant zero if $\eta \neq \eta'$. However, expanding the determinant we see that it equals 
\[ \int_{[0,1]^d} \sum_{\sigma \in S_d} \operatorname{sgn}\sigma \prod_{j=1}^{d} \partial_j \partial_{\sigma(j)} (\hat{s} \varphi_1 + \hat{t} \varphi_2)(\theta_j \eta + (1 - \theta_j) \eta') \,d\theta_1 \ldots \,d\theta_d; \]
the integrand is seen to be the determinant of a matrix that is a small perturbation of $\hat{s} \nabla^2 \varphi_1(\eta) + \hat{t} \nabla^2 \varphi_2(\eta)$. If we impose -- as we do -- that for $(\eta,s,t) \in S$ (where $S$ is given by \eqref{eq:tower_of_parameters}) this is non-zero, then the integrand is never zero and in particular single-signed (provided $\epsilon$ is small), and therefore the determinant above is not zero and $\eta = \eta'$. \par 
To complete the proof given in Section \ref{section:main_proof} all that remains to show is that we can make the sublevel set estimate \eqref{eq:sublevel_set_est_well_curved} uniform in $\eta$; this is the most delicate part. First of all, recall as observed in Remark \ref{remark:implicit_constant_change_of_var} that the implicit constant in Lemma \ref{lemma:sublevel_set_estimate_two_factors} can be made explicit: with $\theta,\theta'$ normalised linear forms (which for simplicity we assume real, without loss of generality), we have
\[ |\{ (s,t) : |s|,|t|\leq 1, |\theta(s,t)^{\mu} \theta'(s,t)^{\nu}|<\delta\}| \lesssim \frac{|\{ (s,t) : |s|,|t|\lesssim 1, |s^{\mu} t^{\nu}|<\delta\}|}{|\det \begin{pmatrix} \theta & \theta' \end{pmatrix}|}, \]
where $\det \begin{pmatrix} \theta & \theta' \end{pmatrix}$ is the Jacobian determinant of the map $(s,t) \mapsto (\theta(s,t), \theta'(s,t))$; thus the implicit constant is $O(|\det \begin{pmatrix} \theta & \theta' \end{pmatrix}|^{-1})$. Secondly, by continuity of the roots we have the following: if 
\[ \theta_1^{m_1}, \ldots, \theta_{\ell}^{m_{\ell}} \]
are the distinct normalised roots of $\det(s \nabla^2 \varphi_1 (0) + t \nabla^2 \varphi_2 (0))$ with respective multiplicities, then for a fixed $\eta \in [-\epsilon, \epsilon]^d$ and $\epsilon$ sufficiently small the distinct normalised roots of $\det(s \nabla^2 \varphi_1 (\eta) + t \nabla^2 \varphi_2 (\eta))$ are 
\[ \tilde{\theta}_{1 1}^{m_{1 1}}, \ldots, \tilde{\theta}_{1  n_1}^{m_{1 n_1}}, \ldots, \tilde{\theta}_{\ell 1}^{m_{\ell 1}}, \ldots, \tilde{\theta}_{\ell n_\ell}^{m_{\ell n_{\ell}}}, \]
where each $\tilde{\theta}_{ji}$ for $1\leq i \leq n_j$ is a small perturbation of $\theta_{j}$ and for each $j$ we have $\sum_{i=1}^{n_j} m_{ji} = m_j$. In particular, for any $j,k$ we have 
\[ |\det \begin{pmatrix} \theta_j & \theta_k \end{pmatrix}| \sim |\det \begin{pmatrix} \tilde{\theta}_{ji} & \tilde{\theta}_{k i'} \end{pmatrix}| \]
for all $1\leq i \leq n_j$ and $1 \leq i' \leq n_k$. To obtain a sublevel set estimate that is uniform in $\eta \in [-\epsilon, \epsilon]^d$ it will then suffice to show that we can find coefficients $\mu_{j i k i'} \geq 0$ such that we have the structural factorisation
\[ \prod_{j=1}^{\ell}\prod_{i=1}^{n_j} \tilde{\theta}_{j i}^{m_{j i}} = \prod_{j=1}^{\ell} \prod_{j < k \leq \ell} \prod_{i=1}^{n_j} \prod_{i'=1}^{n_k} (\tilde{\theta}_{j i} \tilde{\theta}_{k i'})^{\mu_{j i k i'}} \]
(in this way in our constants we will avoid terms like $|\det(\begin{pmatrix} \tilde{\theta}_{j i} & \tilde{\theta}_{j i'} \end{pmatrix}|^{-1}$, which could be arbitrarily large). This can be achieved by a variation of the argument used in the proof of Proposition \ref{prop:sublevel_set_estimate_1}, as we now illustrate. As in there, the existence of such a factorisation translates into the existence of a non-negative solution to the equations
\[ L_{ji} : \quad \sum_{k < j} \sum_{i' = 1}^{n_k} \mu_{k i' j i} + \sum_{k'>j}\sum_{i''=1}^{n_{k'}} \mu_{j i k' i''} = m_{j i}, \]
for $1\leq j \leq \ell$ and $1\leq i \leq n_j$. Appealing once again to Lemma \ref{lemma:Farkas}, it suffices to show that there is no simultaneous solution $(y_{j i})_{j\leq \ell, i\leq n_j}$ to the inequalities
\[ \left\{ \begin{aligned}
y_{j i} + y_{k i'} &\geq 0 \quad \text{ for all } 1\leq j < k \leq \ell,\, 1\leq i \leq n_j, \, 1\leq  i' \leq n_k, \\
\sum_{j=1}^{\ell} \sum_{i=1}^{n_j} m_{j i} y_{j i} & < 0.
\end{aligned} \right. \]
Since $m_{ji} >0$ the second inequality implies that for some $j_{\ast}$ one coefficient $y_{j_{\ast} i}$ is negative; let $y_{j_{\ast} i_{\ast}}$ be the most negative of such coefficients. From the first inequality we see that for every $j \neq j_{\ast}$ we must have $y_{ji} \geq |y_{j_{\ast} i_{\ast}}| > 0$ for all $1\leq i\leq n_j$, and therefore we have 
\begin{align*}
&\sum_{j \neq j_{\ast}} m_j = \sum_{j \neq j_{\ast}} \sum_{i=1}^{n_j} m_{ji} \leq  \sum_{j \neq j_{\ast}} \sum_{i=1}^{n_j} m_{ji} \frac{y_{ji}}{|y_{j_{\ast} i_{\ast}}|} \\
&= \frac{1}{|y_{j_{\ast} i_{\ast}}|}\sum_{j=1}^{\ell} \sum_{i=1}^{n_j} m_{ji}y_{ji} - \frac{1}{|y_{j_{\ast} i_{\ast}}|}\sum_{i=1}^{n_{j_{\ast}}} m_{j_{\ast} i} (y_{j_{\ast} i} - y_{j_{\ast} i_{\ast}}) + \sum_{i=1}^{n_{j_{\ast}}} m_{j_{\ast} i} \\
& < - \frac{1}{|y_{j_{\ast} i_{\ast}}|}\sum_{i=1}^{n_{j_{\ast}}} m_{j_{\ast} i} (y_{j_{\ast} i} - y_{j_{\ast} i_{\ast}}) + \sum_{i=1}^{n_{j_{\ast}}} m_{j_{\ast} i} \leq \sum_{i=1}^{n_{j_{\ast}}} m_{j_{\ast} i} = m_{j_{\ast}};
\end{align*}
this would imply $m_{j_{\ast}} > d/2$, a contradiction because $\Sigma(\varphi_1,\varphi_2)$ is well-curved at $\xi = 0$. This concludes the proof.
\subsection*{Acknowledgements} The third author was supported in part by the Irish Research Council via the IRC Postdoctoral Fellowship GOIPD/2019/434. The authors are indebted to P.\@ Gressman and M.\@ Christ for enlightening conversations about their work, and to J.\@ Bennett and M.\@ Iliopoulou for equally enlightening conversations on the Mizohata-Takeuchi conjecture.
%
%
%
% Uncomment the following two lines if you want to have a bibliography
\bibliographystyle{abbrv}
\bibliography{bibliography}

\end{document}